\newtheorem{thm}{Theorem}[section]
\newtheorem{prop}[thm]{Proposition}
\newtheorem{cor}[thm]{Corollary}
\newtheorem{lem}[thm]{Lemma}
\theoremstyle{definition}
\newtheorem{dfn}[thm]{Definition}
\newtheorem{example}[thm]{Example}
\newtheorem{rmk}[thm]{Remark}
\numberwithin{equation}{section}
\newcommand{\Pol}{{\rm Pol}}
\newcommand{\cHil}{\mathcal{H}}
\newcommand{\cK}{\mathcal{K}}
\newcommand{\cH}{\mathcal{H}}
\newcommand{\bG}{\mathbb{G}}
\newcommand{\id}{\textrm{id}}
\newcommand{\cB}{\mathcal{B}}
\newcommand{\bc} {\Bbb C}
\newcommand{\ot}{\otimes}
\newcommand{\cHPol}{\mathcal{H}_{{\rm Pol}}}
\newcommand{\bM}{\mathsf{M}}
\newcommand{\bA}{\mathsf{A}}
\newcommand{\bB}{\mathsf{B}}
\newcommand{\bQ}{\mathsf{Q}}
\newcommand{\boldv}{{\bf v}}
\newcommand{\boldw}{{\bf w}}
\newcommand{\cW}{\mathcal{W}}
\newcommand{\cWmin}{\mathcal{W}_{{\rm min}}}
\newcommand{\cWred}{\mathcal{W}_{{\rm red} }}
\newcommand{\Link}{{\rm Link}}
\newcommand{\Star}{{\rm Star}}
\newcommand{\cQ}{\mathcal{Q}}
\newcommand{\cE}{\mathcal{E}}
\newcommand{\cP}{\mathcal{P}}
\newcommand{\bN}{\mathsf{N}}
\newcommand{\Irr}{{\rm Irr}}
\newcommand{\cA}{\mathcal{A}}
\title[Graph products of operator algebras]{Graph products of operator algebras}
\begin{document}

%\maketitle

\thispagestyle{empty}

\begin{center}

\textbf{\textsc{Graph products of operator algebras}}

\vspace{0.3cm}

\small

\textsc{M. Caspers$^a$, P. Fima$^b$}

\vspace{0.3cm}

{\it
\noindent  $^a$ Corresponding author. Address: Fachbereich Mathematik und Informatik der Universit\"at M\"unster,
Einsteinstrasse 62, 48149 M\"unster, Germany. \em E-mail: martijn.caspers@uni-muenster.de.\em     \\
\noindent $^b$ Address:  Univ Paris Diderot, Sorbonne Paris Cit\'e, IMJ-PRG, UMR 7586, F-75013, Paris, France.
  Sorbonne Universit\'es, UPMC Paris 06, UMR 7586, IMJ-PRG, F-75005, Paris, France.
  CNRS, UMR 7586, IMJ-PRG, F-75005, Paris, France.
\em E-mail:  pierre.fima@imj-prg.fr

\vspace{0.3cm}

\noindent MC is supported by the grant SFB 878 ``{\it Groups, geometry and actions}''.\\
PF  is supported by the ANR grants NEUMANN and OSQPI.

\vspace{0.3cm}

\noindent  {\it MSC2010}: 46L09, 46L10, 20G42. {\it Keywords}: Graph products, von Neumann algebras, quantum groups, free products, approximation properties.}

\end{center}

\small

\vspace{0.3cm}

\noindent {\sc Abstract:} Graph products for groups were defined by Green in her thesis \cite{Green} as a generalization of both Cartesian and free products.  In this paper we define the corresponding graph product for reduced and maximal C$^\ast$-algebras, von Neumann algebras and quantum groups. We prove  stability properties including permanence of II$_1$-factors,  the Haagerup property, exactness and, under suitable conditions, the property of Rapid Decay for quantum groups.

\normalsize

\vspace{0.3cm}

\section*{Introduction}

\noindent A graph product is a group theoretical construction starting from a simplicial graph with  a discrete group associated to each vertex. The graph product construction results in a new group and special cases depending on the graph are free products and Cartesian products. Important examples of graph products are right angled Coxeter groups and right angled Artin groups.

\vspace{0.2cm}

\noindent Graph products preserve many important group theoretical properties. This yields important new examples of groups having such properties and gives (alternative) proofs of such properties for existing groups. For instance the graph product preserves soficity \cite{CioHolRee2}, Haagerup property \cite{AntDre}, residual finiteness \cite{Green}, rapid decay \cite{CioHolRee}, linearity \cite{HsuWis} and many other properties, see e.g. \cite{HerMei}, \cite{AntMin}, \cite{Chi}.

 \vspace{0.2cm}

\noindent Whereas many of the stability properties above have important consequences for operator algebras, the actual operator algebras of graph products have  been unexplored so far. The current paper develops the theory of reduced and universal/maximal C$^\ast$-algebraic graph products as well as the graph product of von Neumann algebras and quantum groups. These objects generalize free products by adding commutation relations that are dictated by the graph.

\vspace{0.2cm}

\noindent Free products of operator algebras play a central role in von Neumann and C$^\ast$-algebra theory; in particular in the context of free probability and deformation and rigidity theory. Operator algebraic graph products -- which give in a suitable sense a notion of ``partial freeness'' being quantized in terms of the number of edges of the graph -- provide new examples that are closely related to these areas.  Such ideas appeared in fact already in an early stage of free probability theory, for example in \cite{SpeicherLetter} Speicher proves a mixed Fermion--Boson analogue of his central limit theorem (used in \cite{CasConnes} to show stability of the Connes embedding problem for graph products). Other important implicit occurrences of graph products can be found in the work by Bozejko and Speicher on Coxeter groups, see e.g. \cite{BozSpe}. Also in \cite{AccardiEtAl} other extensions of free probability using graphs were investigated by Accardi, Lenczewski and Salapata.  In this context we also mention the current parallel developments on bi-freeness \cite{Voic}.

\vspace{0.2cm}

\noindent We shall relate the basic properties of graph products of operator algebras/quantum groups to the ones of their vertices. This includes Tomita-Takesaki theory, commutants, GNS-representations, (co)representation theory,  et cetera. We also show that any graph product of von Neumann algebras decomposes inductively into amalgamated free products of the von Neumann algebras at its edges. For notation we refer to Section \ref{Sect=OA}.

\begin{thm}\label{Thm=AmalIntro}
Let $\Gamma$ be a simplicial graph with von Neumann algebras $\bM_v, v\in V\Gamma$ and graph product von Neumann algebra $\bM$. Fix $v \in V\Gamma$. Let $\bM_1$ be the graph product von Neumann algebra given by $\Star(v)$. Let $\bM_2$ be the graph product von Neumann algebra given by $\Gamma \backslash \{v \}$. Let $\bN$ be the graph product von Neumann algebra given by $\Link(v)$. Then $\bM \simeq \bM_1 \star_\bN \bM_2$.
\end{thm}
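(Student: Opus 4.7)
The plan is to realise $\bM$ as the amalgamated free product $\bM_1 *_\bN \bM_2$ by checking the standard $\bN$-freeness condition with respect to a canonical conditional expectation. First, since on the induced sub-graph $\Star(v) = \{v\} \cup \Link(v)$ the vertex $v$ is adjacent to every other vertex, the copy of $\bM_v$ inside $\bM_1$ commutes with the graph product $\bN$ over $\Link(v)$; hence $\bM_1 \simeq \bM_v \wot \bN$, and in particular $\bN$ sits as a unital sub-von-Neumann algebra of $\bM_1$. Because $\Link(v)$ is an induced sub-graph of $\Gamma \setminus \{v\}$, the same is true of $\bM_2$. Together with the canonical embeddings $\bM_1, \bM_2 \hookrightarrow \bM$ coming from the graph product over sub-graphs, this supplies the ambient data required by the theorem.

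Next I would invoke the graph product state $\omega$ on $\bM$. By the general theory developed in Section \ref{Sect=OA}, $\omega$ is faithful and normal and for every induced sub-graph $\Lambda \subseteq \Gamma$ there is a normal faithful $\omega$-preserving conditional expectation $E_\Lambda \colon \bM \to \bM_\Lambda$. Setting $E_\bN = E_{\Link(v)}$ and $E_i = E_\bN|_{\bM_i} \colon \bM_i \to \bN$, all compatibilities needed to form the reduced amalgamated free product $\bM_1 *_\bN \bM_2$ relative to $(E_1, E_2)$ are automatic.

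The heart of the argument is to verify $\bN$-freeness inside $\bM$: for any alternating sequence $a_1, \ldots, a_n$ with $a_i \in \ker E_i \subset \bM_{k_i}$ and $k_i \neq k_{i+1}$, one needs $E_\bN(a_1 \cdots a_n) = 0$. I would prove this by reduced-word analysis. Each $a_i$ is a $\sigma$-weak limit of graph product reduced words with letters in $\Star(v)$ or $\Gamma \setminus \{v\}$, and the condition $E_i(a_i) = 0$ forces these words to contain at least one letter at $v$ (if $k_i = 1$) or at least one letter from $(\Gamma \setminus \{v\}) \setminus \Link(v)$ (if $k_i = 2$). Using that $v$ commutes with $\Link(v)$, and invoking the graph-product reduction algorithm, the product $a_1 \cdots a_n$ rewrites as a $\sigma$-weak limit of graph-product reduced words in $\bM$ that are \emph{not} supported in $\Link(v)$; evaluating $E_\bN$ then kills them.

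Once $\bN$-freeness is established, the universal property of the amalgamated free product yields a normal $*$-homomorphism $\pi \colon \bM_1 *_\bN \bM_2 \to \bM$ extending the identity on each $\bM_i$. Surjectivity is clear since $\bM$ is generated by $\bM_1 \cup \bM_2$, and injectivity follows from faithfulness of the amalgamated free product expectation and of $E_\bN$, together with the intertwining $E_\bN \circ \pi = E_{\bM_1 *_\bN \bM_2}$. The main obstacle is the combinatorial step of the third paragraph: one has to argue that the graph-product reduction of an alternating $\bN$-centred product preserves the presence of ``bad'' letters -- at $v$ or outside $\Star(v)$ -- which is exactly where the decomposition $\Gamma = \Star(v) \cup (\Gamma \setminus \{v\})$ with common intersection $\Link(v)$ earns its keep.
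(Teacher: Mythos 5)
Your proposal follows essentially the same route as the paper: there the von Neumann statement (Theorem \ref{Thm=AmalgamvN}) is proved exactly by this scheme, via the C$^\ast$-version Theorem \ref{Thm=Amalgam} --- canonical state-preserving expectations onto $\bM_{\Link(v)}$, verification that alternating products from $\ker\cE_1$ and $\ker\cE_2$ are annihilated by $\cE_{\Link(v)}$ through reduced-word analysis, and then the universal property of the amalgamated free product. The combinatorial step you single out as the main obstacle is carried out in the paper by induction on the total word length, using that a cancellation between letters of adjacent factors can only occur at a vertex of $\Gamma_1\cap\Gamma_2=\Star(v)\cap(\Gamma\setminus\{v\})=\Link(v)$, so the ``bad'' letters at $v$ or outside $\Star(v)$ survive every reduction --- precisely the mechanism you describe.
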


\noindent There is a corresponding result of Theorem \ref{Thm=AmalIntro} for C$^\ast$-algebras, see Section \ref{Sect=OA}. Theorem \ref{Thm=AmalIntro}  implies that any property of a von Neumann algebra that is being preserved by arbitrary amalgamated free products is automatically preserved by the graph product. However, there is a large number of properties which are not (or not known to be) preserved by amalgamated free products. For example, the Haagerup property is known not to be preserved by arbitrary amalgamated free products. But in fact we prove the following.

\begin{thm}\label{Thm=IntroA}
Let $\Gamma$ be a simplicial graph with von Neumann algebras $\bM_v, v \in V\Gamma$. Let $\bM$ be the graph product von Neumann algebra. Then,
\begin{enumerate}
\item\label{Item=IntroA1} Suppose that every $\bM_v$ is $\sigma$-finite.  $\bM$ has the Haagerup property if and only if for every $v \in V \Gamma$, $\bM_v$ has the Haagerup property.
\item $\bM$ is a II$_1$ factor if for every $v \in V\Gamma$, $\bM_v$ is a II$_1$ factor.
\end{enumerate}
\end{thm}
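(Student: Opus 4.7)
The plan is to prove both parts simultaneously by induction on $|V\Gamma|$. The base case of a single vertex is trivial. For the inductive step, fix any vertex $v \in V\Gamma$ and apply Theorem \ref{Thm=AmalIntro} to write $\bM \simeq \bM_1 \star_\bN \bM_2$, where $\bM_1$, $\bM_2$, and $\bN$ denote the graph products over $\Star(v)$, $\Gamma \setminus \{v\}$, and $\Link(v)$ respectively. The key algebraic observation is that every vertex in $\Link(v)$ is adjacent to $v$, so $v$ commutes with all of $\bN$ and hence $\bM_1 \simeq \bN \wot \bM_v$ as a tensor product. Both $\bM_2$ and $\bN$ involve strictly fewer vertices, so the inductive hypothesis applies.

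For part (2), assuming each $\bM_v$ is a II$_1$ factor, induction gives that $\bN$ and $\bM_2$ are II$_1$ factors, and then $\bM_1 = \bN \wot \bM_v$ is a II$_1$ factor as a tensor product of such. The product of vertex traces defines a faithful normal tracial state on $\bM$, so $\bM$ is finite. Factoriality of $\bM \simeq \bM_1 \star_\bN \bM_2$ then follows from standard results on amalgamated free products of II$_1$ factors over a common II$_1$ subfactor (assuming without loss of generality that each $\bM_v$ is nontrivial), completing the induction.

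For part (1), the forward direction is immediate: each $\bM_v$ is the range of a faithful normal conditional expectation from $\bM$ (provided by the graph product construction), so the Haagerup property descends. For the reverse direction, by induction $\bN$, $\bM_2$, and $\bM_v$ have the Haagerup property, and hence so does $\bM_1 = \bN \wot \bM_v$. What remains is to show that the amalgamated free product $\bM_1 \star_\bN \bM_2$ preserves the Haagerup property. This is the heart of the argument: preservation under amalgamated free products is known to fail in general, and the standard remedy (going back to Boca) requires a relative Haagerup property for each inclusion $\bN \subset \bM_i$. On $\bM_1$ this is easily obtained from the tensor factorization, since $\id_\bN \otimes \psi_n$ is $\bN$-bimodular whenever $\psi_n$ is a Haagerup approximation on $\bM_v$.

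The main obstacle is therefore the corresponding statement for $\bM_2$: one must construct completely positive, $\bN$-bimodular approximations of the identity on $\bM_2$ with relatively compact behaviour modulo $\bN$. To achieve this I would strengthen the inductive hypothesis to a relative Haagerup statement (the graph product has the Haagerup property relative to the subalgebra generated by any full subgraph) and prove that this relative version propagates through the same amalgamated free product decomposition. Granted this relative inductive step, combining the relative approximations on $\bM_1$ and $\bM_2$ and averaging them on alternating reduced words yields a Haagerup approximation on the amalgamated free product, recovering the absolute Haagerup property of $\bM$.
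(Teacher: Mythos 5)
Your reduction of both parts to the decomposition $\bM\simeq\bM_1\star_\bN\bM_2$ with $\bM_1\simeq\bN\wot\bM_v$ is sound as far as it goes (this is Theorem \ref{Thm=AmalgamvN} together with Remark \ref{Rmk=IsoVn}), but both inductive steps contain genuine gaps. For part (1), the reverse implication is exactly the unproved part: you need $\bN$-bimodular cp approximations of the identity on $\bM_2=\bM_{\Gamma\setminus\{v\}}$ with suitable compactness, and then a mechanism turning relative approximations on the two legs into honest compact approximations on the amalgam. You acknowledge this and propose to ``strengthen the inductive hypothesis to a relative Haagerup statement,'' but no argument is given for either step, and this is precisely the heart of the matter: the paper points out in the introduction that the Haagerup property is \emph{not} preserved by arbitrary amalgamated free products, which is why its proof of Corollary \ref{Cor=HAPFiniteKac} avoids the unscrewing decomposition altogether. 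Instead it builds the approximating maps directly on the graph product: Proposition \ref{Prop=ucp} gives the graph product of the vertex ucp maps (a Boca-type construction) whose $L^2$-implementation is the explicit tensor operator $T_\varphi$, Remark \ref{Rmk=LowerThan1} arranges $\Vert T_{v,i}\vert_{\cH_v^\circ}\Vert<1$ so that for finite $\Gamma$ this implementation is compact, and infinite $\Gamma$ is handled by the inductive-limit reduction (note that your induction on $|V\Gamma|$ says nothing about infinite graphs, for either part). Even granting a relative Haagerup property for $\bN\subset\bM_2$, the passage from ``compact modulo $\bN$'' on the legs to genuine compactness on $\bM_1\star_\bN\bM_2$ is not a known black box, so as written part (1) is a plan rather than a proof.

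For part (2), the assertion that factoriality ``follows from standard results on amalgamated free products of II$_1$ factors over a common II$_1$ subfactor'' is not available: that general statement is false. For instance, if $\theta$ is an outer involutive automorphism of a II$_1$ factor $\bN$, then $(\bN\rtimes_\theta\mathbb{Z}/2)\star_\bN(\bN\rtimes_\theta\mathbb{Z}/2)\cong\bN\rtimes(\mathbb{Z}/2\star\mathbb{Z}/2)$ with both generators acting by $\theta$; the element $g=ab$ acts trivially, so $u_g+u_g^*$ is a nonscalar central element, although $\bN$ and both legs are II$_1$ factors with trace-preserving expectations. What rescues your situation is the special position of $\bM_v$ inside $\bM_1=\bN\wot\bM_v$: since $\bM_v$ is diffuse and sits in tensor position with $\bN$, one gets $\bM_v\nprec_{\bM_1}\bN$, and then \cite[Theorem 1.1]{IPP08} places $\bM_v'\cap\bM_\Gamma$ (hence $Z(\bM_\Gamma)$) inside $\bM_{\Star(v)}$ --- this is exactly the paper's Lemma \ref{Lem=IPP1}, which combined with Corollary \ref{Cor=RelativeCommutant} and Proposition \ref{Prop=intersection} yields $Z(\bM_\Gamma)=\bM_{\bigcap_{v}\Link(v)}=\mathbb{C}1$. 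So part (2) is repairable, but the intertwining input (diffuseness plus the $\nprec$ condition) must be supplied; it is not a consequence of the three algebras merely being II$_1$ factors.
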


\noindent While proving stability of the Haagerup property we also included a canonical proof of extending completely positive maps to graph (and in particular free) products by considering their Stinespring dilations, see Proposition \ref{Prop=ucp}. In the case of quantum groups we find the following stability properties:

\begin{thm}\label{Thm=IntroB}
Let $\Gamma$ be a simplicial graph with compact quantum groups $\bG_v, v \in V\Gamma$. Let $\bG$ be its graph product and let $\widehat{\bG}_v, \widehat{\bG}$ be their duals. Then,
\begin{enumerate}
\item\label{Item=IntroB1} $\widehat{\bG}$ has the Haagerup property if and only if for every $v \in V\Gamma$, $\widehat{\bG}_v$ has the Haagerup property.
\item Let $\Gamma$ be finite. If for every $v \in V\Gamma$, $\widehat{\bG}_v$ is a classical group with the property of Rapid Decay (RD) or a quantum group with polynomial growth, then the graph product $\widehat{\bG}$ has (RD).
\item Let $\Gamma$ be finite without edges. Then $\bG = \star_{v \in V\Gamma} \bG_v$. If for every $v \in V\Gamma$, $\widehat{\bG}_v$ has (RD), then $\widehat{\bG}$ has (RD). I.e. (RD) is preserved by finite free products.
\end{enumerate}
\end{thm}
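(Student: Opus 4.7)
\noindent The plan is to prove the three parts in sequence, lifting the multiplier techniques behind Theorem \ref{Thm=IntroA} to the quantum-group setting where approximations must come from states on $C_u(\bG)$, and then pairing this with direct combinatorial estimates for (RD).

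For part (1), I would use the characterization of the Haagerup property for a discrete quantum group by a net of states $\omega_i \in C_u(\bG)^*$ such that the convolution multipliers $L_{\omega_i} = (\id\otimes\omega_i)\Delta$ are u.c.p.\ on $L^\infty(\bG)$, have $c_0$ Fourier support on $\Irr(\bG)$, and converge pointwise to the identity; on each $\pi \in \Irr(\bG)$ the map $L_{\omega_i}$ is the scalar $\phi_i(\pi)$. The forward direction follows by restricting $\omega_i$ along the canonical embedding $C_u(\bG_v) \hookrightarrow C_u(\bG)$ coming from the graph-product structure. For the converse, starting from HAP data $\phi_i^v$ on each $\Irr(\bG_v)$, I would define $\phi_i$ on graph-product irreducibles by $\phi_i(\pi_{v_1}\otimes\cdots\otimes\pi_{v_n}) = \phi_i^{v_1}(\pi_{v_1})\cdots\phi_i^{v_n}(\pi_{v_n})$ on reduced-word irreducibles, check compatibility with the edge commutation relations, and verify the resulting multiplier is u.c.p. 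The main obstacle is the complete positivity, and my strategy is to build the multiplier iteratively using the amalgamated free product decomposition of Theorem \ref{Thm=AmalIntro} and the known permanence of c.p.\ multipliers under amalgamated free products with compatible conditional expectations.

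For parts (2) and (3) I would establish the RD inequality $\|x\|_{C^*_r} \le P(L) \|x\|_2$ for $x$ supported on irreducibles of length at most $L$ by combinatorial analysis of reduced words. Decompose $x = \sum_{\mathbf{v}} x_{\mathbf{v}}$ over legal reduced types $\mathbf{v} = (v_1, \ldots, v_n)$ in the graph-product normal form. Bound $\|x_{\mathbf{v}}\|_{C^*_r}$ by a polynomial-in-$L$ factor times the $\ell^2$-norm using an iterated free-product (RD) estimate along the reduced word: the hypothesis that each $\widehat{\bG}_v$ has polynomial growth or is a classical (RD) group provides the required single-vertex estimate for part (2), while for part (3) the edgeless graph makes the normal form a genuine alternating free-product word, and a Vergnioux-type free-product (RD) argument applies directly. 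Sum over shapes: finiteness of $\Gamma$ keeps the number of legal types of length at most $L$ polynomial in $L$. The main technical obstacle is the bookkeeping of the $C^*_r$-norm across graph-product reduced words under the commutation relations imposed by edges, which is precisely what the finiteness of $\Gamma$ and the polynomial-growth hypothesis are designed to control.
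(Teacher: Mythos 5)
Your outline for part (1) identifies the right obstacle (positivity of the product multiplier) but resolves it with a tool that does not exist: there is no ``known permanence of c.p.\ multipliers under amalgamated free products with compatible conditional expectations'' in the generality you need. The unscrewing of Theorem \ref{Thm=AmalIntro} presents $\bM_\Gamma$ as an amalgamated free product over the link algebra, which is itself an arbitrary (typically infinite, non-amenable) graph product, and the Haagerup property is known \emph{not} to pass through arbitrary amalgamated free products -- this is exactly the point the paper makes right after Theorem \ref{Thm=AmalIntro}, and the reason its quantum-group proof does not go through the amalgam. Instead the paper proves positivity directly: it takes the GNS representations of the vertex states $\mu_{v,k}$ from Proposition \ref{Prop=HAPDiscrete}, forms the \emph{reduced graph product} of the resulting pointed Hilbert spaces, uses the universal property of the maximal graph product C$^\ast$-algebra to represent $\bA_m$ there, and realizes the graph product functional $\mu_k=\diamond_v(\mu_{v,k},\omega_v)$ as a vector state; then $(\mathcal{F}\mu_k)^{\alpha_1\otimes\cdots\otimes\alpha_l}=\otimes_j(\mathcal{F}\mu_{v_j,k})^{\alpha_j}$ (via Theorem \ref{Thm=GraphCoreps}), and the normalization $\Vert(\mathcal{F}\mu_{v,k})^{\alpha}\Vert\le e^{-1/k}$ plus finiteness of $\Gamma$ (reduction to finite $\Gamma$ via \cite{FreStability}) gives the $c_0$ and convergence conditions. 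A smaller but real inaccuracy: saying the multiplier acts by a scalar $\phi_i(\pi)$ on each $\pi\in\Irr(\bG)$ presumes central states, which is neither part of the characterization in \cite{DFSW} nor what the construction yields; one must track the matrices $(\mathcal{F}\mu)^{\alpha}$.

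For parts (2) and (3) the decisive flaw is the ``sum over shapes'' step: the number of reduced types of length at most $L$ is \emph{exponential} in $L$ as soon as $\Gamma$ contains three pairwise non-adjacent vertices (a free product of $d\ge 3$ factors has $d(d-1)^{n-1}$ alternating types of length $n$), so a triangle-inequality bound over types, even with a perfect per-type estimate, cannot produce a polynomial constant. Any RD proof must exploit orthogonality between the contributions of different shapes; this is what Proposition \ref{Prop=RDIntermediate} does, decomposing $P_m\mathcal{F}(a)P_l$ according to how many letters of the reduced operator are absorbed, pass through, or land in a clique, so that the final bound involves only the (finite) number of cliques of $\Gamma$, not the number of words. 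You also misplace the role of the hypothesis in (2): it is not a ``single-vertex estimate'' (polynomial growth already implies RD for each vertex), but is needed because RD is not known to pass to direct products of arbitrary discrete quantum groups; the commuting blocks inside a reduced word force you to prove RD for the clique subproducts $\widehat{\bG}_{\Gamma_0}$, which is exactly Lemma \ref{Lem=RDTensor} and is where ``classical with RD or polynomial growth'' enters (in (3) the cliques are singletons, which is why arbitrary RD vertices suffice there, via Theorem \ref{Thm=RDPreservence}). Finally, a complete argument must first construct a central length on $\widehat{\bG}$ compatible with the vertex lengths and verify $\widehat{\Delta}(L)\le L\otimes 1+1\otimes L$, $\widehat{\kappa}(L)=L$, $\widehat{\epsilon}(L)=0$ (Lemma \ref{Lem=GraphProductLength}); your outline takes the meaning of ``supported on irreducibles of length at most $L$'' for granted.
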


\noindent It must be emphasized that for compact quantum groups with tracial Haar state (i.e. of so-called Kac type)  Theorem \ref{Thm=IntroB} \eqref{Item=IntroB1} follows from Theorem \ref{Thm=IntroA} \eqref{Item=IntroA1} by \cite[Theorem 6.7]{DFSW}. However, it is unknown if the result of \cite[Theorem 6.7]{DFSW} extends beyond Kac type quantum groups. In fact \cite{CasLeeRic} shows that the behaviour of approximation properties outside the Kac case can be quite different.  In the group case our result gives an alternative proof of stability of the Haagerup property under graph products, which was first proved in \cite{AntDre}.

\vspace{0.2cm}

\noindent {\it Acknowledgements.} The authors thank Amaury Freslon for useful comments on Section \ref{Sect=RD}. The authors thank the anonymous referees for their comments.

\subsection*{Structure of this paper}

Section \ref{Sect=Preliminaries} introduces the basic notions for graph products. In section \ref{Sect=OA} we develop the theory of graph products of operator algebras: graph products of Hilbert spaces, von Neumann algebras and maximal and reduced graph products of C*-algebras, study their representation theory and develop the unscrewing technique as explained in Theorem \ref{Thm=AmalIntro}. We also prove some stability properties such as exactness for reduced graph product of C*-algebras and the Haagerup property for von Neumann algebras. In Section \ref{Sect=QG} we define graph products of quantum groups, study their representation theory and prove the stability of the Haagerup property. Section \ref{Sect=RD} proves stability of rapid decay for quantum groups under graph products.

\subsection*{General notation and preliminaries}
We denote $M_n$ for the $n\times n$ matrices over $\mathbb{C}$. We use bold face characters $\bA$ and $\bM$ for operator algebras and calligraphic characters $\cH$ and $\cK$ for Hilbert spaces. The scalar product on Hilbert spaces is supposed to be linear in the first variable. The symbol $\otimes$ denotes the tensor product of Hilbert spaces, the minimal tensor product of C$^\ast$-algebras or von Neumann algebras tensor product and it should be natural from the context which of these is meant. The symbol  $\underset{\text{max}}{\otimes}$ will denote the maximal tensor product of C*-algebras. A state on a C$^\ast$-algebra or more generally a completely positive map between C$^\ast$-algebras is called GNS-faithful if the representation given by the GNS-construction is faithful. Faithful states are GNS-faithful but the converse is false.

\section{Preliminaries}\label{Sect=Preliminaries}

\noindent Let $\Gamma$ be a simplicial graph. This means that $\Gamma$ is given by a vertex set $V\Gamma$ and edge set $E\Gamma \subseteq V\Gamma \times V\Gamma\setminus\{(v,v)\,:\,v\in V\Gamma\}$. We assume that the graph is non-oriented in the sense that $(v,w) \in E\Gamma$ if and only if $(w, v) \in E \Gamma$, for all $v,w\in V\Gamma$. For $v \in V\Gamma$ we write $\Link(v)$ for the set of all $w \in V\Gamma$ such that $(v, w) \in E \Gamma$.   We set $\Star(v) = \Link(v) \cup \{ v \}$.  Let $A \subseteq V\Gamma$ be a subset of vertices. The full subgraph of $\Gamma$ with vertex set $A$ is the graph having vertex set $A$ and  $w, w'\in A$ are connected by an edge if and only if $(w,w') \in E\Gamma$. We slightly abuse notation and also use $\Link(v)$ and $\Star(v)$ for the full subgraphs of $\Gamma$ with respective vertex sets $\Link(v)$ and $\Star(v)$. It shall always be clear from the context if $\Link(v)$ (or $\Star(v)$) is a graph or a vertex set.

\vspace{0.2cm}

\noindent {\bf Convention:} From this point we will say that $\Gamma_0 \subseteq \Gamma$ is a subgraph if $\Gamma_0$ is the {\it full} subgraph of $\Gamma$ with vertex set $V\Gamma_0$.

\begin{dfn}
A {\it clique} in the graph $\Gamma$ is a subgraph $\Gamma_0 \subseteq \Gamma$ such that for every $v, v' \in V\Gamma_0$ with $v \not = v'$  we have $(v, v') \in E\Gamma_0$ (so a complete subgraph of $\Gamma$). In particular every single vertex of $\Gamma$ forms a clique (with no edges). By convention the empty graph is a clique. We denote ${\rm Cliq}(s)$ for all cliques in $\Gamma$ with exactly $s$ vertices.
\end{dfn}

\begin{dfn}\label{Dfn=GroupGraphProd}
For each $v \in V \Gamma$ let $G_v$ be a discrete group. The graph product $G_\Gamma$ is defined as the group obtained from the free product of $G_v, v \in V \Gamma$ by adding the relations
\[
[s, t] = 1 \textrm{ for all } s \in G_v, t \in G_w \textrm{ and all }  v, w \in V\Gamma\textrm{ such that } (v,w) \in E\Gamma.
\]
\end{dfn}

\noindent A \textit{word} is a finite sequence $\boldv = (v_1, \ldots, v_n)$ of elements in $V \Gamma$. We shall commonly use bold face notation for words and write $v_i$ for the entries of $\boldv$.  The collection of words is denoted by $\cW$ and by convention the empty word is not included in $\cW$.  We say that two words $\boldv$ and $\boldw$ are \textit{equivalent} if they are equivalent modulo the equivalence relation generated by the two relations:

\begin{equation}\label{Eqn=Equivalences}
\begin{split}
{\rm I}\quad(v_1, \ldots, v_i, v_{i+1}, \ldots, v_n) \simeq (v_1, \ldots, v_i, v_{i+2}, \ldots, v_n) \qquad& {\rm if} \quad  v_i = v_{i+1},\\
{\rm II}\quad(v_1, \ldots, v_i, v_{i+1}, \ldots, v_n) \simeq (v_1, \ldots, v_{i+1}, v_{i}, \ldots, v_n) \qquad& {\rm if} \quad v_i \in \Link(v_{i+1}).
\end{split}
\end{equation}
Moreover, we say that two words $\boldv$ and $\boldw$ are \textit{type ${\rm II}$ equivalent} if they are equivalent modulo the sub-equivalence relation generated by the relation ${\rm II}$.
A word $\boldv \in \cW$ is \textit{reduced} if the following statement holds:
 \begin{equation}\label{Eqn=ReducedWord}
 \begin{split}
& \textrm{ Suppose that there are } k > l  \textrm{ such that } v_k = v_l, \\
&\textrm{ then we do not have that all } v_{k+1}, \ldots, v_{l-1} \in \Star(v_k).
\end{split}
 \end{equation}

\noindent We let $\cWred$ be the set of all reduced words. Observe that if $\boldv$ is reduced and type II equivalent to $\boldv'$ then also $\boldv'$ is reduced.
\begin{lem}\label{Lem=ReducedRep}
We have,
\begin{enumerate}
\item \label{Item=ReducedRepI} Every word $\boldv$ is equivalent to a reduced word $\boldw=(w_1,\dots,w_n)$.
 \item \label{Item=ReducedRepII} If $\boldv$ is also equivalent to a reduced word $\boldw'$, then the lengths of $\boldw$ and $\boldw'$ are equal.
 \item \label{Item=ReducedRepIII} Moreover, there exists a permutation $\sigma$ of $\{ 1, \ldots, n \}$ such that $\boldw' = (w_{\sigma(1)}, w_{\sigma(2)}, \ldots, w_{\sigma(n)})$.
\item \label{Item=ReducedRepIV} There is a unique such $\sigma$ if we impose the condition that if $k > l$ and $w_k = w_l$, then $\sigma(k) > \sigma(l)$.
\end{enumerate}
\end{lem}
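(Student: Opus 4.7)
For (i) I proceed by strong induction on $n=|\boldv|$. If $\boldv$ is reduced we are done; otherwise pick $k<l$ with $v_k=v_l=:v$ and all intermediate $v_{k+1},\dots,v_{l-1}\in\Star(v)$, subject to $l-k$ minimal. Minimality forces each intermediate letter to lie in $\Link(v)$ rather than equal $v$ itself, hence to commute with $v$. A chain of $l-k-1$ type II swaps brings $v_l$ adjacent to $v_k$, after which a single type I move deletes the pair, producing an equivalent word of length $n-2$ to which the inductive hypothesis applies.

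For (ii) and (iii) my strategy is to strengthen the conclusion: I claim that any two reduced words in the same equivalence class are in fact \emph{type II} equivalent. This immediately yields equal length and a realizing permutation $\sigma$. I would prove this strengthening by induction on $n=|\boldw|$ via the following key subclaim: whenever $\boldw=(w_1,\dots,w_n)$ is reduced and $\boldw'$ is a reduced word equivalent to $\boldw$, there is a reduced word $\boldw''$ ending in $w_n$ and type II equivalent to $\boldw'$. Once this is available, one removes the final $w_n$ from both $\boldw$ and $\boldw''$; the resulting length $(n{-}1)$ reduced words are equivalent, so by induction they are type II equivalent, and re-appending $w_n$ finishes the step.

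The subclaim itself I would prove by tracking $w_n$ through a derivation $\boldw=\boldu_0\simeq\cdots\simeq\boldu_N=\boldw'$, maintaining at each stage the invariant that there is a distinguished occurrence of $w_n$ whose entire suffix within $\boldu_i$ lies in $\Link(w_n)$. Type II moves transparently preserve this invariant, while a type I cancellation involving the tracked letter promotes an earlier occurrence (whose existence is ensured, via the reducedness of the endpoints, by the observation that otherwise $\boldw'$ itself would fail to be reduced) to the tracked position. Keeping the bookkeeping consistent through arbitrary sequences of moves, including those that temporarily pass through non-reduced intermediate words, is where I expect the main technical difficulty to lie.

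For (iv) the uniqueness is a direct consequence of the fact that type II equivalences never interchange two occurrences of the \emph{same} letter: the rule $v_i\in\Link(v_{i+1})$ requires $v_i\neq v_{i+1}$ since the simplicial graph $\Gamma$ has no loops. Thus the relative order of the occurrences of each vertex $v\in V\Gamma$ is preserved. Given this, any permutation $\sigma$ realizing $\boldw'$ as a rearrangement of $\boldw$ and respecting the stated order constraint is obtained by matching, for each vertex $v$, the $j$-th occurrence of $v$ in $\boldw$ to the $j$-th occurrence of $v$ in $\boldw'$, which determines $\sigma$ uniquely.
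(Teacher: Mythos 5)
Your parts (i) and (iv) are fine and essentially coincide with the paper's arguments: (i) is the paper's one-line induction on length fleshed out, and (iv) is exactly the observation that type II moves never exchange two occurrences of the same letter, so $\sigma$ must be the unique increasing bijection between the occurrence sets of each vertex. The problem is (ii)--(iii), which you reduce to the strengthened claim that equivalent reduced words are type II equivalent. That claim is true, but it is precisely the normal form theorem for graph products, and your proposed proof of the key subclaim does not go through as described. The invariant ``there is a distinguished occurrence of $w_n$ whose entire suffix lies in $\Link(w_n)$'' is not preserved by arbitrary derivations: a type I \emph{insertion} of a pair $uu$ with $u\notin\Star(w_n)$ immediately after the tracked occurrence destroys it, and when the tracked occurrence is cancelled against a preceding copy of $w_n$ you assert, but do not prove, that some earlier occurrence again satisfies the invariant --- reducedness of the two endpoint words says nothing about the (possibly badly non-reduced) intermediate word where the cancellation happens. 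You flag this bookkeeping as ``the main technical difficulty,'' and indeed it is the entire content of the statement; as written the proposal leaves (ii) and (iii) unproven.

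For comparison, the paper does not reprove the normal form theorem. For (ii) it encodes words as elements of the graph product of copies of the group $(\mathbb{R}^+,\cdot)$, sending $\boldw=(w_1,\dots,w_n)$ to $2_{w_1}\cdots 2_{w_n}$, observes that reduced words give reduced group elements, and invokes Green's normal form theorem \cite[Theorem 3.9]{Green} to conclude equality of lengths. For (iii) it then argues directly by counting occurrences of each vertex: if some vertex occurred fewer times in $\boldw'$ than in $\boldw$, a cancellation must have taken place along the derivation, which forces two occurrences $w_k=w_l=v$ in $\boldw$ with all intermediate letters in $\Star(v)$, contradicting reducedness of $\boldw$. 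If you want a self-contained argument along your lines, you would essentially have to redo Green's proof (or build a faithful action on reduced words); otherwise the honest fix is to cite the normal form theorem at the point where your subclaim is needed.
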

\begin{proof}
\eqref{Item=ReducedRepI} Note that any word that can not be made shorter my means of permutations and cancellations \eqref{Eqn=Equivalences} is reduced. Hence, statement $(1)$ follows from an obvious induction.

\vspace{0.2cm}

\noindent\eqref{Item=ReducedRepII}  This is essentially the normal form theorem \cite[Theorem 3.9]{Green}. It can be derived as follows. For each $v \in V \Gamma$ let $G_v$ be the group $\mathbb{R}^+$ with multiplication. For $x \in \mathbb{R}^+$ we shall explicitly write $x_v$ to identify it as an element of $G_v$. Associate to the word $\boldw$ of length $n$ the group element $g_{\boldw} := 2_{w_1} 2_{w_2} \ldots 2_{w_n}$ in the graph product of the groups $G_v, v\in V \Gamma$, see Definition \ref{Dfn=GroupGraphProd}. Since $\boldw$ is reduced, it follows that $g_{\boldw}$ is reduced in the sense of \cite{Green}.  Assume that $\boldw'$ has length $m$. Since $\boldw$ is equivalent to $\boldw'$, there exists elements $x_{1}, \ldots, x_{m}$ with $x_i \in G_{w_i'}$ and $x_i > 1$ such that $g_{\boldw}$ is equivalent to the graph product element $g_{\boldw'} = x_{1}  \ldots  x_{m}$ (this can easily be seen by checking this on each step \eqref{Eqn=Equivalences} to obtain this equivalence, in particular $x_i$ is either a power or a root of 2). Since $\boldw'$ is reduced, it follows that $g_{\boldw'}$ is reduced. Hence, $g_{\boldw'}$ and $g_{\boldw}$ are reduced equivalent elements in the graph product of $G_v, v \in \Gamma$ and by the normal form Theorem \cite[Theorem 3.9]{Green}, this implies that $m=n$. In fact \cite[Theorem 3.9]{Green} implies also that $x_i = 2$.

\vspace{0.2cm}

\noindent \eqref{Item=ReducedRepIII}
 Let $m$ be the total number of times that a given $v$ appears in $\boldw$. We need to show that $v$ appears exactly $m$ times in $\boldw'$. Suppose that this is not the case. Since $\boldw$ and $\boldw'$ have the same word length we may assume, without loss of generality, that it appears less than $m$ times in $\boldw'$ since else, we may change $v$ to  another vertex for which this is true. But since $\boldw'$ is obtained from $\boldw$ through the equivalences \eqref{Eqn=Equivalences} this means that there exists some $l> k$ such that $w_l = w_k = v$ and $w_{k+1}, \ldots, w_{l-1} \in \Star(v)$ which contradicts the fact that $\boldw$ is reduced.

\vspace{0.2cm}

\noindent\eqref{Item=ReducedRepIV} Write $\boldw'=(w'_1,\dots,w'_n)$ and note that any permutation $\sigma$ of $\{1,\dots,n\}$ such that $\boldw'=(w_{\sigma(1)},\dots,w_{\sigma(n)})$ induces, for any vertex $w$ occurring in the word $\boldw$, a bijection from $K_{w}' := \{ i \mid w_i' = w\}$ to $K_w := \{ i \mid w_i = w\}$. There is a unique such bijection which is moreover increasing.
\end{proof}

\noindent Let $\cWmin$ be a complete set of representatives of the reduced words under the equivalence relation described above. We call an element of $\cWmin$ a \textit{minimal word}. It is then clear that every word $\boldv$ is equivalent to a unique minimal word $\boldw$. Note that $\cWmin$ excludes the empty word.

%%%%%%%%%%%%%%%%%%%%%%%%%%%%%%%%%%%%%%%%%%%
\section{Graph products of operator algebras}\label{Sect=OA}
%%%%%%%%%%%%%%%%%%%%%%%%%%%%%%%%%%%%%%%%%%%

\noindent In this section we construct graph products of operator algebras. In case the graph $\Gamma$ does not have edges, the graph product coincides with the free product for which we refer to \cite{Voi}. In addition, it is important to emphasize that our constructions are different from \cite{FimFre}: indeed graph products impose {\it commutation relations} on the resulting algebra which, in general, cannot be written in terms of the {\it amalgamations} imposed by the constructions in \cite{FimFre}.

%%%%%%%%%%%%%%%%%%%%%%%%%%%%%%%%%%%%%%%%%%%
\subsection{The graph product Hilbert space}\label{Section=GraphProductHilbert}
%%%%%%%%%%%%%%%%%%%%%%%%%%%%%%%%%%%%%%%%%%%

For all $v\in V\Gamma$ let $\cH_v$ be a Hilbert space with a norm one vector $\xi_v\in \cH_v$. Define $\cH_v^{\circ}=\cH_v\ominus\mathbb{C}\xi_v$ and let $\mathcal{P}_v$ be the orthogonal projection onto $\cH_v^{\circ}$. For $\boldv \in \cWred$ we let,
\[
\cHil_{\boldv} = \cHil_{v_1}^{\circ} \otimes \ldots \otimes \cHil_{v_n}^{\circ}.
\]
By Lemma \ref{Lem=ReducedRep} we see that if $\boldv \in \cWred$ is equivalent to $\boldw \in \cWred$ then there exists a uniquely determined unitary map,
\begin{equation}\label{Eqn=QMap}
\cQ_{\boldv, \boldw}: \cHil_{\boldv} \rightarrow \cHil_{\boldw}: \xi_{1} \otimes  \ldots \otimes \xi_{n} \mapsto
\xi_{{\sigma(1)}} \otimes  \ldots \otimes \xi_{{\sigma(n)}},
\end{equation}
where $\sigma$ is as in Lemma \ref{Lem=ReducedRep} \eqref{Item=ReducedRepIV}.
Since every $\boldv \in \cWred$ has a unique minimal form $\boldv'$ we may simply write $\cQ_{\boldv}$ for $\cQ_{\boldv, \boldv'}$. %In fact we shall write $\cQ$ for $\cQ_{\boldv, \boldv'}$ in case it is clear on which space it acts.

\vspace{0.2cm}
\noindent Define the \textit{graph product Hilbert space} $(\mathcal{H},\Omega)$ by:
$$\mathcal{H}=\bc\Omega\oplus\bigoplus_{\boldw \in \cW_{\text{min}}}\mathcal{H}_{\boldw}.$$
For $v\in V\Gamma$, let $\cW_v$ be the set of minimal reduced words $\boldw$ such that the concatenation $v\boldw$ is still reduced and write $\cW_v^c=\cW_{\text{min}}\setminus\cW_v$. Define
$$\mathcal{H}(v)=\bc\Omega\oplus\bigoplus_{\boldw\in\cW_v}\mathcal{H}_\boldw.$$
We define the isometry $U_v\,:\,\cH_v\ot\cH(v)\rightarrow\cH$ in the following way:
$$\begin{array}{llcl}
U_v\,:\,&\cH_v\ot\cH(v) &\longrightarrow& \cH\\
&\xi_v\ot\Omega &\overset{\simeq}{\longrightarrow}&\Omega\\
&\cH_v^{\circ}\ot\Omega&\overset{\simeq}{\longrightarrow}&\cH_v^{\circ}\\
&\xi_v\ot\cH_\boldw&\overset{\simeq}{\longrightarrow}&\cH_\boldw\\
&\cH_v^{\circ}\ot\cH_\boldw&\overset{\simeq}{\longrightarrow}&\cQ_{v\boldw}(\cH_v^{\circ}\ot\cH_\boldw)
\end{array}$$
Here the actions are understood naturally. Observe that, for any reduced word $\boldw$, the word $v\boldw$ is not reduced if and only if $\boldw$ is equivalent to a reduced word that starts with $v$. It follows that $U_v$ is surjective, hence unitary.
Define, for $v\in V\Gamma$, the faithful unital normal $*$-homomorphism $\lambda_v\,:\,\mathcal{B}(\cH_v)\rightarrow\mathcal{B}(\cH)$ by
$$\lambda_v(x)=U_v(x\ot 1)U_v^*\quad\text{for all}\quad x\in\cB(\cH_v).$$
Observe the $\lambda_v$ intertwines the vector states $\omega_{\xi_v}$ and $\omega_{\Omega}$.  Let $\xi_v \in \cH_v$. We use $\xi_v^\ast: \cH_v \rightarrow \mathbb{C}$ for the mapping $\eta \mapsto \langle \eta, \xi_v \rangle$.

\begin{prop}\label{Prop=commutation} For all $v\in V\Gamma$ and all $x\in\cB(\cH_v)$ one has:
\begin{enumerate}
\item $\lambda_v(x)\Omega=\mathcal{P}_v(x\xi_v)+\langle x\xi_v,\xi_v\rangle\Omega$.
\item $\lambda_v(x)\xi=\mathcal{P}_v(x\xi)+\langle x\xi,\xi_v\rangle\Omega$ for all $\xi\in\cH_v^{\circ}$.
\item $\lambda_v(x)\xi=\cQ_{v\boldw}(\mathcal{P}_v(x\xi_v)\ot\xi)+\langle x\xi_v,\xi_v\rangle\xi$ for all $\boldw\in\cW_v$ and all $\xi\in\cH_\boldw$.
\item If $\boldw\in\cW^c_v$ then there exists a unique $\boldw_v\in\cW_v$ such that $\boldw\simeq v\boldw_v$ are II equivalent and, for all $\xi\in\cH_\boldw$, one has
$$\lambda_v(x)\xi=\cQ_{v\boldw_v}(\cP_vx\ot\id)\cQ_{v\boldw_v}^*\xi+(\xi_v^*x\ot\id)\cQ_{v \boldw_v}^*\xi.$$
\end{enumerate}
Moreover, the images of $\lambda_v$ and $\lambda_{v'}$ commute whenever $(v,v')\in E\Gamma$.
\end{prop}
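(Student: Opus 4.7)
The plan is to obtain items (1)--(4) by unwinding the piecewise definition of $U_v$ on each summand of $\cH_v \ot \cH(v)$, and then to reduce the commutation claim to a structural fact: when $(v,v') \in E\Gamma$, both $U_v$ and $U_{v'}$ admit a common refinement to a single unitary $\cH_v \ot \cH_{v'} \ot \cH(v,v') \to \cH$ in which $\cH_v$ and $\cH_{v'}$ appear as symmetric tensor factors.

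For items (1)--(3), I would apply $U_v^*$ to the input vector, which lands in one of the four model summands $\xi_v \ot \Omega$, $\cH_v^\circ \ot \Omega$, $\xi_v \ot \cH_\boldw$, or $\cH_v^\circ \ot \cH_\boldw$ (with $\boldw \in \cW_v$); apply $x \ot 1$; split $x\xi_v$ (resp.\ $x\eta$) as $\cP_v(\cdot) + \la \cdot,\xi_v\ra\xi_v$; and then map back through $U_v$. In each case the two resulting summands land in exactly the subspaces displayed in the formula.

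For (4), I would first establish existence and uniqueness of $\boldw_v$. That $\boldw \in \cW_v^c$ means $v\boldw$ is not reduced, so since $\boldw$ itself is reduced the obstruction must come from some letter $w_k = v$ in $\boldw$ all of whose predecessors lie in $\Link(v)$; successive type-II transpositions push this $v$ to the front, giving $\boldw \simeq_{\rm II} v\boldw_v$ with $\boldw_v$ reduced, and reducedness/minimality of $\boldw_v$ follow from Lemma~\ref{Lem=ReducedRep} together with the fact that type-II equivalence preserves reducedness. Uniqueness is obtained by cancelling the leading $v$ via the unique $\sigma$ in Lemma~\ref{Lem=ReducedRep}\eqref{Item=ReducedRepIV}. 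The identity $U_v|_{\cH_v^\circ \ot \cH_{\boldw_v}} = \cQ_{v\boldw_v}$ then gives $U_v^*\xi = \cQ_{v\boldw_v}^*\xi$, and splitting $(x \ot \id)\cQ_{v\boldw_v}^*\xi$ into its $\cH_v^\circ$ and $\bc\xi_v$ components (via $(\cP_v \ot \id)$ and $(\xi_v^* \ot \id)$) reproduces exactly the two displayed summands, the second one ultimately living in $\cH_{\boldw_v} \subset \cH$ after the $U_v$-identification $\xi_v \ot \cH_{\boldw_v} \simeq \cH_{\boldw_v}$.

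For the commutation, the clean route is to introduce $\cW(v,v') \subset \cWmin$ consisting of those $\boldw$ for which $vv'\boldw$ (equivalently, using $vv' \simeq_{\rm II} v'v$, also $v'v\boldw$) is reduced, and to set $\cH(v,v') = \bc\Omega \oplus \bigoplus_{\boldw \in \cW(v,v')}\cH_\boldw$. On the same eight-summand template as $U_v$, one then builds a unitary
\[
U_{v,v'} : \cH_v \ot \cH_{v'} \ot \cH(v,v') \longrightarrow \cH,
\]
routing tensors involving $\cH_v^\circ$ and/or $\cH_{v'}^\circ$ through the appropriate $\cQ_{vv'\boldw}$. Checking on each summand gives the two factorizations
\[
U_{v,v'} = U_v \circ (\id_{\cH_v} \ot U_{v'}^{v}) = U_{v'} \circ (\id_{\cH_{v'}} \ot U_v^{v'}) \circ \Sigma,
\]
where $U_{v'}^v$ is the analogue of $U_{v'}$ with image in $\cH(v)$, and $\Sigma$ is the flip of the first two tensor factors. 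From these, $\lambda_v(x) = U_{v,v'}(x \ot 1 \ot 1)U_{v,v'}^*$ and $\lambda_{v'}(y) = U_{v,v'}(1 \ot y \ot 1)U_{v,v'}^*$, so they commute. The main obstacle is verifying that $U_{v,v'}$ as defined is genuinely unitary and that both factorizations hold; this is bookkeeping of the minimal-form maps $\cQ_{v\boldw}$, $\cQ_{v'\boldw}$, and $\cQ_{vv'\boldw}$ along the two routes, and crucially uses $vv' \simeq_{\rm II} v'v$ to identify the two orderings.
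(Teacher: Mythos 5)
Your items (1)--(4) are handled exactly as in the paper: one simply unwinds the piecewise definition of $U_v$ and splits $x\xi_v$ (resp.\ $x\eta$) into its $\cH_v^{\circ}$- and $\bc\xi_v$-components; your extra argument for existence and uniqueness of $\boldw_v$ via Lemma \ref{Lem=ReducedRep} is a welcome addition, since the paper merely asserts it. For the commutation statement, however, your route is genuinely different from the paper's, and it is correct. The paper proves commutation by brute force: it verifies $\lambda_v(x)\lambda_{v'}(y)\xi=\lambda_{v'}(y)\lambda_v(x)\xi$ on every type of vector $\xi$, which requires the three-part Claim 1 with a long list of identities among the maps $\cQ_{v\boldw}$, including the delicate cases $\boldw\in\cW_v\setminus\cW_{v'}$ and $\boldw\in\cW_v^c\cap\cW_{v'}^c$ where the operators act on vectors in non-reduced position. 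Your construction of the common refinement $U_{v,v'}\colon \cH_v\ot\cH_{v'}\ot\cH(v,v')\to\cH$ makes commutation automatic, since $\lambda_v(x)=U_{v,v'}(x\ot 1\ot 1)U_{v,v'}^*$ and $\lambda_{v'}(y)=U_{v,v'}(1\ot y\ot 1)U_{v,v'}^*$ once the two factorizations $U_{v,v'}=U_v\circ(\id\ot U_{v'}^{v})=U_{v'}\circ(\id\ot U_v^{v'})\circ\Sigma$ are in place. The residual work is (i) the word-combinatorial fact that every $\boldw\in\cW_v$ is either in $\cW_v\cap\cW_{v'}$ or type-II equivalent to $v'\boldw_0$ with $vv'\boldw_0$ reduced (this makes $U_{v'}^{v}$ well defined and unitary, by the same surjectivity argument used for $U_v$), and (ii) checking the factorizations summand by summand, whose only nontrivial instance is the identity $\cQ_{v\boldw_1}(\eta_v\ot\cQ_{v'\boldw}(\eta_{v'}\ot\xi))=\cQ_{v'\boldw_2}(\eta_{v'}\ot\cQ_{v\boldw}(\eta_v\ot\xi))$ for $\boldw$ with $vv'\boldw$ reduced --- precisely part (1) of the paper's Claim 1, its easiest case. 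So your approach buys a conceptual explanation of the commutation (both representations are amplifications in a single tensor decomposition) and avoids the hardest bookkeeping of the paper, at the price of constructing one more unitary; the paper's approach, conversely, never introduces the refined decomposition but pays for it with the full case analysis. Either route supports the later uses of the proposition, since those only invoke formulas (1)--(4) and the commutation itself.
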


\begin{proof}
The first part of the proposition is an immediate consequence of the definition of $U_v$.

\noindent $(1)$ One has
$
\lambda_v(x)\Omega =U_v(x\xi_v\ot \Omega)=U_v(\cP_v(x\xi_v)\ot\Omega)+\langle x\xi_v,\xi_v\rangle U_v(\xi_v\ot\Omega)=\cP_v(x\xi_v)+\langle x\xi_v,\xi_v\rangle\Omega.
$

\noindent $(2)$ Let $\xi\in\cH^{\circ}_v$, one has $\lambda_v(x)\xi=U_v(x\xi\ot\Omega)=\cP_v(x\xi)+\langle x\xi,\xi_v\rangle\Omega$.

\noindent $(3)$ Let $\boldw\in\cW_v$ and $\xi\in\cH_\boldw$, one has $\lambda_v(x)\xi=U_v(x\xi_v\ot\xi)=\cQ_{v\boldw}(\cP_v(x\xi_v)\ot\xi)+\langle x\xi_v,\xi_v\rangle\xi$.

\noindent $(4)$ Let $\boldw_v\in\cW_v$, $\xi\in\cH_v^{\circ}$ and $\eta\in\cH_{\boldw_v}$. We find $\lambda_v(x)\cQ_{v\boldw_v}(\xi\ot\eta)=\cQ_{v\boldw_v}(\mathcal{P}_v(x\xi)\ot\eta))+\langle x\xi,\xi_v\rangle\eta$. Hence, for all $\xi\in\cH_{v\boldw_v}$, one has $\lambda_v(x)\cQ_{v\boldw_v}\xi=\cQ_{v\boldw_v}(\mathcal{P}_vx\ot\id)\xi+(\xi_v^*x\ot\id)\xi$. Since $\cQ_{v\boldw_v}\,:\,\cH_{v\boldw_v}\rightarrow\cH_\boldw$ is unitary, this gives the result.

\vspace{0.2cm}

\noindent We can now finish the proof of the proposition. Let $v,v'\in V\Gamma$ be such that $(v,v')\in E\Gamma$. Let $x\in\cB(\cH_v)$ and $y\in\cB(\cH_{v'})$. Writing $\lambda_v(x)=\lambda_v(x-\langle x\xi_v,\xi_v\rangle 1)+ \langle x\xi_v,\xi_v\rangle 1$ and $\lambda_{v'}(x)=\lambda_{v'}(y-\langle y\xi_{v'},\xi_{v'}\rangle 1)+ \langle y\xi_{v'},\xi_{v'}\rangle 1$ we see that we may and will assume that $\langle x\xi_v,\xi_v\rangle=\langle y\xi_{v'},\xi_{v'}\rangle=0$. Note that this implies that $\cP_{v}(x\xi_{v}) = x\xi_{v}$ and $\cP_{v'}(y\xi_{v'}) = y\xi_{v'}$ .

\vspace{0.2cm}

\noindent Let  $\boldw$ be the unique reduced minimal word equivalent to $(vv')$. Then $\boldw=(vv')$ or $\boldw=(v'v)$. In both cases we find $\cQ_{vv'}=\cQ_{v'v}\circ\Sigma$, where $\Sigma\,:\,\cH_{vv'}=\cH_v^{\circ}\ot\cH_{v'}^{\circ}\rightarrow \cH_{v'}^{\circ}\ot\cH_{v}^{\circ}=\cH_{v'v}$ is the flip map. Hence, we find:
$$
\lambda_v(x)\lambda_{v'}(y)\Omega = \lambda_v(x) y\xi_{v'} =\cQ_{vv'}( x\xi_v \ot y\xi_{v'})=\cQ_{v'v}(\cP_{v'}(y\xi_{v'})\ot x\xi_{v})=\lambda_{v'}(y)\lambda_{v}(x)\Omega.
$$

\noindent Let $\xi\in\cH_{v'}^{\circ}$. One has $\lambda_v(x)\lambda_{v'}(y)\xi= \lambda_v(x)( \cP_{v'} y\xi + \langle y\xi, \xi_{v'} \rangle \Omega  ) =\cQ_{vv'}( x\xi_v \ot  \cP_{v'} y\xi) + \langle y \xi, \xi_{v'} \rangle x \xi_v $ and,

\begin{eqnarray*}
\lambda_{v'}(y)\lambda_{v}(x)\xi &=& \lambda_{v'}(y)(\cQ_{vv'}( x\xi_v \ot\xi))=\lambda_{v'}(y)(\cQ_{v'v}(\xi\ot x\xi_v))=\cQ_{v'v}( \cP_{v'} y\xi\ot x\xi_v) + \langle y \xi, \xi_v \rangle x \xi_v \\
&=&\cQ_{vv'}( x\xi_v \ot \cP_{v'} y\xi) + \langle y \xi, \xi_v \rangle x \xi_v =\lambda_{v}(x)\lambda_{v'}(y)\xi.
\end{eqnarray*}

\noindent \textit{Claim. Let $(v,v')\in E\Gamma$ and $\boldw\in\cW_{\text{min}}$.
\begin{enumerate}
\item\label{Item=Claim1dot1} Suppose that $\boldw\in\cW_v\cap\cW_{v'}$ and define $\boldw_1, \boldw_2\in\cW_{\text{min}}$ such that $v'\boldw\simeq \boldw_1$ and $v\boldw\simeq \boldw_2$. Then $\boldw_1\in\cW_v$, $\boldw_2\in\cW_{v'}$, $v\boldw_1\simeq v'\boldw_2$
and, for all $\eta_v\in\cH_v^{\circ}$, $\eta_{v'}\in\cH_{v'}^{\circ}$ and $\xi\in\cH_\boldw$ one has
$$\cQ_{v\boldw_1}\left(\eta_v\ot\cQ_{v'\boldw}(\eta_{v'}\ot\xi) \right)=\cQ_{v'\boldw_2}\left(\eta_{v'}\ot\cQ_{v\boldw}(\eta_{v}\ot\xi) \right).$$
\item\label{Item=Claim1dot2} Suppose that $\boldw\in\cW_v\setminus\cW_{v'}$ and define $\boldw_1\in\cW_{v'}$, $\boldw_2\in\cW_{\text{min}}$ such that $\boldw\simeq v' \boldw_1$ and $\boldw_2\simeq v \boldw$. Then, $\boldw_1\in\cW_v$ and $\boldw_2\in\cW^c_{v'}$. Let $\boldw_3\in\cW_{v'}$ such that $\boldw_2\simeq v'\boldw_3$. For all $\xi\in\cH_w$, $y\in\cB(\cH_{v'})$, $\eta\in\cH_v^{\circ}$,
$$\eta_v\ot\cQ_{v'\boldw_1}(\cP_{v'}y\ot\id)\cQ_{v'\boldw_1}^*\xi=\cQ_{v\boldw}^*\cQ_{v'\boldw_3}(\mathcal{P}_{v'}y\ot\id)\cQ_{v'\boldw_3}^*\cQ_{v\boldw}(\eta_v\ot\xi)\quad\text{and,}$$
$$\cQ_{v\boldw_1}\left(\eta_v\ot(\xi_{v'}^*y\ot\id)\cQ_{v'\boldw_1}^*\xi\right)=(\xi_{v'}^*y\ot\id)\cQ_{v'\boldw_3}^*\cQ_{v\boldw}(\eta_v\ot\xi).$$
\item\label{Item=Claim1dot3} Suppose that $\boldw\in\cW^c_v\cap\cW^c_{v'}$ and define $\boldw_1\in\cW_{v'}$, $\boldw_2\in\cW_{v}$ such that $\boldw\simeq v'\boldw_1$ and $\boldw\simeq v\boldw_2$. Then, $\boldw_1\in\cW_v^c$ and $\boldw_2\in\cW_{v'}^c$. Define $\boldw_1'\in\cW_v$ and $\boldw_2'\in\cW_{v'}$ such that $\boldw_1\simeq v\boldw_1'$ and $\boldw_2\simeq v'\boldw_2'$. One has
$$\cQ_{v\boldw_2}(\cP_vx\ot\id)\cQ_{v\boldw_2}^*\cQ_{v'\boldw_1}(\cP_{v'}y\ot\id)\cQ_{v'\boldw_1}^*=\cQ_{v'\boldw_1}(\cP_{v'}y\ot\id)\cQ_{v'\boldw_1}^*\cQ_{v\boldw_2}(\cP_vx\ot\id)\cQ_{v\boldw_2}^*,$$
$$(\xi_v^*x\ot\id)\cQ_{v\boldw_2}^*\cQ_{v'\boldw_1}(\cP_{v'}y\ot\id)\cQ_{v'\boldw_1}^*=\cQ_{v'\boldw_2'}(\cP_{v'}y\ot\id)\cQ_{v'\boldw_2'}^*(\xi_v^*x\ot\id)\cQ_{v\boldw_2}^*,$$
$$\cQ_{v\boldw_1'}(\cP_vx\ot\id)\cQ_{v\boldw_1'}^*(\xi_{v'}^*y\ot\id)\cQ_{v'\boldw_1}^*=(\xi_{v'}^*y\ot\id)\cQ_{v'\boldw_1}^*\cQ_{v\boldw_2}(\cP_vx\ot\id)\cQ_{v\boldw_2}^*,$$
$$(\xi_v^*x\ot\id)\cQ_{v\boldw_1'}^*(\xi_{v'}^*y\ot\id)\cQ_{v'\boldw_1}^*=(\xi_{v'}^*y\ot\id)\cQ_{v'\boldw_2}^*(\xi_v^*x\ot\id)\cQ_{v\boldw_2}^*.$$
\end{enumerate}}
\vspace{0.2cm}

\noindent \textit{Proof of the Claim.} In each of the subclaims we let $u_1, \ldots, u_n$ with $u_i \in V\Gamma$ denote (part of the) letters of $\boldw$.

\vspace{0.2cm}

\noindent \eqref{Item=Claim1dot1}
We may assume that $\xi = \xi_{u_1} \otimes   \ldots \otimes \xi_{u_n}$ is a simple tensor product with $\xi_{u_i} \in \cH_{u_i}^\circ$.    Let $u_1' \ldots u_n'$ and $k$ be such that $u_1' \ldots u_k' v' u_{k+1}' \ldots u_n'$ is minimal with $u_1'\ldots u_k' \in \Link(v')$ and $u_1' \ldots u_n' \simeq \boldw$.  Let $u_1'' \ldots u_n''$ and $m_1, m_2$  be such that $u_1'' \ldots u_{m_1}'' v u_{m_1+1}'' \ldots u_{m_2}'' v' u_{m_2+1}'' \ldots  u_n''$ is minimal with $u_1'' \ldots u_{m_1}'' \in \Link(v), u_1'' \ldots u_{m_2}'' \in \Link(v')$ and $u_1'' \ldots u_n'' \simeq \boldw$ (for notational convenience we assume that $m_1 < m_2$, the other case can be treated similarly).
 Then,
\[
\begin{split}
&\cQ_{v\boldw_1}(\eta_v \otimes \cQ_{v'\boldw} (\eta_{v'} \otimes \xi))
=
\cQ_{v\boldw_1}( \eta_v \otimes \xi_{u_1'} \otimes \ldots \otimes \xi_{u_k'} \otimes \eta_{v'} \otimes \xi_{u_{k+1}'} \otimes \ldots \otimes \xi_{u_n'} ) \\
=&
\xi_{u_1''}\otimes \ldots \otimes \xi_{u_{m_1}''} \otimes \eta_v \otimes \xi_{u_{m_1+1}''} \otimes \ldots \otimes \xi_{u_{m_2}''} \otimes \eta_{v'} \otimes \xi_{u_{m_2+1}''} \otimes \ldots \otimes \xi_{u_n''},
\end{split}
\]
and using the fact that $(v, v') \in E \Gamma$ the same computation shows that this expression equals $\cQ_{v'\boldw_2}( \eta_{v'} \otimes \cQ_{v\boldw}(\eta_v \otimes \xi) )$.

\vspace{0.2cm}

\noindent \eqref{Item=Claim1dot2} We may assume that $\xi = \xi_{u_1} \otimes \ldots \otimes \xi_{u_{k}} \otimes \eta_{v'} \otimes \xi_{u_{k+1}} \otimes \ldots \otimes \xi_{u_n}$ and that $u_1\ldots u_k v' u_{k+1}\ldots u_n$ is minimal with $u_1, \ldots, u_k \in \Link(v')$. Then, letting $u_1' \ldots u_n'$ and $l$ be such that $u_1' \ldots u_l' v u_{l+1}' \ldots u_n'$ is minimal, $u_1' \ldots u_l' \in \Link(v)$ and $u_1'\ldots u_n' \simeq u_1 \ldots u_n$,  we find:
\[
\begin{split}
&\cQ_{v \boldw}^\ast \cQ_{v' \boldw_3} (\cP_{v'} y \otimes \id ) \cQ_{v' \boldw_3}^\ast \cQ_{v\boldw} (\eta_v \otimes \xi)
=  \cQ_{v \boldw}^\ast \cQ_{v' \boldw_3}  ((\cP_{v'} y \eta_{v'}) \otimes \xi_{u_1'} \otimes \ldots \otimes \xi_{u_l'} \otimes \eta_v \otimes \xi_{u_{l+1}' } \otimes \ldots \otimes \xi_{u_n'} ) \\
= &
 \eta_v \otimes \xi_{u_1} \otimes \ldots \otimes \xi_{u_k} \otimes \cP_{v'} y \eta_{v'} \otimes \xi_{u_{k+1}} \otimes \ldots \otimes \xi_{u_n}
= \eta_v \otimes \cQ_{v' \boldw_1} (\cP_{v'} y \otimes \id) \cQ_{v' \boldw_1}^\ast \xi.
\end{split}
\]
And, letting $u_1'' \ldots u_n''$ be minimal and equivalent to $u_1 \ldots u_n$ we have,
\[
\begin{split}
& (\xi_{v'}^\ast y  \otimes \id ) \cQ_{v' \boldw_3}^\ast \cQ_{v\boldw} (\eta_v \otimes \xi)
=  (\xi_{v'}^\ast y  \otimes \id )  \eta_{v'} \otimes \xi_{u_1'} \otimes \ldots \otimes \xi_{u_l'} \otimes \eta_v \otimes \xi_{u_{l+1}'} \otimes \ldots \otimes \xi_{u_n'} \\
= & \langle y \eta_{v'}, \xi_{v'} \rangle \xi_{u_1'} \otimes \ldots \otimes \xi_{u_l'} \otimes \eta_v \otimes \xi_{u_{l+1}'} \otimes \ldots \otimes \xi_{u_n'}
=  \cQ_{v\boldw_1} (\eta_v \otimes \langle y \eta_{v'}, \xi_{v'} \rangle \xi_{u_1'' } \otimes \ldots \otimes \xi_{u_n'' } ) \\
= & \cQ_{v \boldw_1} (\eta_v \otimes (\xi_{v'}^\ast y \otimes \id ) \cQ_{v'\boldw_1}^\ast \xi ).
\end{split}
\]
\eqref{Item=Claim1dot3} Assume that $\xi = \xi_{u_1} \otimes \ldots \otimes \xi_{u_{l}} \otimes \xi_{v} \otimes \xi_{u_{l+1}} \ldots \otimes \xi_{u_{k}} \otimes \xi_{v'} \otimes \xi_{u_{k+1}} \otimes \ldots \otimes \xi_{u_n}$ with $u_1 \ldots u_lv u_{l+1}\ldots u_k v u_{k+1} \ldots u_n$ minimal and $u_1, \ldots, u_l \in \Link(v)$, $u_1, \ldots u_k \in \Link(v')$. Then, using $(v,v') \in E\Gamma$,
\[
\begin{split}
& \cQ_{v\boldw_2} (\cP_vx \otimes \id) \cQ_{v\boldw_2}^\ast \cQ_{v'\boldw_1} (\cP_{v'} y \otimes \id) \cQ_{v' \boldw_1}^\ast \xi \\
= & \cQ_{v\boldw_2} (\cP_vx \otimes \id) \cQ_{v\boldw_2}^\ast  \xi_{u_1} \otimes \ldots \otimes \xi_{u_k} \otimes \cP_{v'}y \xi_{v'}  \otimes \xi_{u_{k+1}} \otimes \ldots \otimes \xi_{u_n}\\
= & \xi_{u_1} \otimes \ldots \otimes \xi_{u_l} \otimes \cP_v x \xi_v \otimes \xi_{u_{l+1}} \otimes \ldots \otimes \xi_{u_{k}} \otimes \cP_{v'} y \xi_{v'} \otimes \xi_{u_{k+1}} \otimes \ldots \otimes \xi_{u_n},
\end{split}
\]
which equals $\cQ_{v' \boldw_1} (\cP_{v'} y \otimes \id) \cQ_{v' \boldw_1}^\ast \cQ_{v\boldw_2} (\cP_v x \otimes \id) \cQ_{v \boldw_2}^\ast$ by a reverse computation. The other equalities follow in a similar way.

\vspace{0.2cm}

\noindent {\it Remainder of the proof of the Proposition.} Let $\boldw\in\cW_v\cap\cW_{v'}$ and consider the minimal words $\boldw_1$, $\boldw_2$ introduced in the first assertion of the Claim. For $\xi \in \cH_{\boldw}$, one has,
\begin{eqnarray*}
\lambda_v(x)\lambda_{v'}(y)\xi&=&\lambda_v(x)\cQ_{v'\boldw}(y\xi_{v'}\ot\xi)=\cQ_{v\boldw_1}\left(x\xi_v\ot\cQ_{v'\boldw}(y\xi_{v'}\ot\xi)\right)\\
&=&\cQ_{v\boldw_2}\left(y\xi_{v'}\ot\cQ_{v\boldw}(x\xi_{v}\ot\xi)\right)=\lambda_{v'}(y)\lambda_v(x)\xi.
\end{eqnarray*}

\noindent Let $\boldw\in\cW_v\setminus\cW_{v'}$ and consider the minimal words $\boldw_1$, $\boldw_2$, $\boldw_3$ introduced in the second assertion of the Claim. One has,
\begin{eqnarray*}
\lambda_v(x)\lambda_{v'}(y)\xi&=&\lambda_v(x)\cQ_{v'\boldw_1}(\cP_{v'}y\ot\id)\cQ_{v'\boldw_1}^*\xi+\lambda_v(x)(\xi_{v'}^*y\ot\id)\cQ_{v'\boldw_1}^*\xi\\
&=&\cQ_{v\boldw}\left(x\xi_v\ot\cQ_{v'\boldw_1}(\cP_{v'}y\ot\id)\cQ_{v'\boldw_1}^*\xi\right)+\cQ_{v\boldw_1}\left(x\xi_v\ot(\xi_{v'}^*y\ot\id)\cQ_{v'\boldw_1}^*\xi\right)\,\,\text{and},
\end{eqnarray*}
\begin{eqnarray*}
\lambda_{v'}(y)\lambda_{v}(x)\xi&=&\lambda_{v'}(y)\cQ_{v\boldw}(x\xi_v\ot\xi)\\
&=&\cQ_{v'\boldw_3}(\cP_{v'}y\ot\id)\cQ_{v'\boldw_3}^*\cQ_{v\boldw}(x\xi_v\ot\xi)+ (\xi_{v'}^*y\ot\id)\cQ_{v'\boldw_3}^*\cQ_{v\boldw}(x\xi_v\ot\xi).
\end{eqnarray*}
These two expressions are equal by the second assertion of the Claim. By symmetry, this is also true when $\boldw\in\cW_{v'}\setminus\cW_v$. Finally, let $\boldw\in\cW_v^c\cap\cW_{v'}^c$ and consider the minimal words $\boldw_1,\boldw_1',\boldw_2,\boldw_2'$ introduced in the third assertion of the Claim. One has:
$$\begin{array}{lcl}
\lambda_v(x)\lambda_{v'}(y)\xi&=&
\lambda_v(x)\cQ_{v'\boldw_1}(\cP_{v'}y\ot\id)\cQ_{v'\boldw_1}^*\xi+\lambda_v(x)(\xi_{v'}^*y\ot\id)\cQ_{v'\boldw_1}^*\xi.\\
&=&\cQ_{v\boldw_2}(\cP_vx\ot\id)\cQ_{v\boldw_2}^*\cQ_{v'\boldw_1}(\cP_{v'}y\ot\id)\cQ_{v'\boldw_1}^*\xi+(\xi_v^*x\ot\id)\cQ_{v\boldw_2}^*\cQ_{v'\boldw_1}(\cP_{v'}y\ot\id)\cQ_{v'\boldw_1}^*\xi\\
&&+\cQ_{v\boldw_1'}(\cP_vx\ot\id)\cQ_{v\boldw_1'}^*(\xi_{v'}^*y\ot\id)\cQ_{v'\boldw_1}^*\xi+(\xi_v^*x\ot\id)\cQ_{v\boldw_1'}^*(\xi_{v'}^*y\ot\id)\cQ_{v'\boldw_1}^*\xi\quad\text{and,}\\
\lambda_{v'}(y)\lambda_v(x)\xi&=&
\lambda_{v'}(y)\cQ_{v\boldw_2}(\cP_vx\ot\id)\cQ_{v\boldw_2}^*\xi+\lambda_{v'}(y)(\xi_v^*x\ot\id)\cQ_{v\boldw_2}^*\xi\\
&=&\cQ_{v'\boldw_1}(\cP_{v'}y\ot\id)\cQ_{v'\boldw_1}^*\cQ_{v\boldw_2}(\cP_vx\ot\id)\cQ_{v\boldw_2}^*\xi+(\xi_{v'}^*y\ot\id)\cQ_{v'\boldw_1}^*\cQ_{v\boldw_2}(\cP_vx\ot\id)\cQ_{v\boldw_2}^*\xi\\
&&+\cQ_{v'\boldw_2'}(\cP_{v'}y\ot\id)\cQ_{v'\boldw_2'}^*(\xi_v^*x\ot\id)\cQ_{v\boldw_2}^*\xi+(\xi_{v'}^*y\ot\id)\cQ_{v'\boldw_2}^*(\xi_v^*x\ot\id)\cQ_{v\boldw_2}^*\xi.
\end{array}$$
These two expressions are equal by the third assertion of the Claim. It concludes the proof.
\end{proof}

\noindent We can also define the right versions of the unitaries $U_v$. For $v\in V\Gamma$, let $\cW^v$ be the set of minimal reduced words $\boldw$ such that the concatenation $\boldw v$ is still reduced and write $(\cW^v)^c=\cW_{\text{min}}\setminus\cW^v$. Define
$$\mathcal{H}'(v)=\bc\Omega\oplus\bigoplus_{\boldw \in \cW^v}\mathcal{H}_{\boldw}.$$
We define the isometry $U'_v\,:\,\cH'(v)\ot\cH_v\rightarrow\cH$ in the following way:
$$\begin{array}{llcl}
U'_v\,:\,&\cH'(v)\ot\cH_v &\longrightarrow& \cH\\
&\Omega\ot\xi_v &\overset{\simeq}{\longrightarrow}&\Omega\\
&\Omega\ot\cH_v^{\circ}&\overset{\simeq}{\longrightarrow}&\cH_v^{\circ}\\
&\cH_\boldw\ot\xi_v&\overset{\simeq}{\longrightarrow}&\cH_\boldw\\
&\cH_\boldw \ot \cH_v^{\circ}&\overset{\simeq}{\longrightarrow}&\cQ_{\boldw v}(\cH_\boldw\ot\cH_v^{\circ})
\end{array}$$
As before, $U_v'$ is unitary. Define, for $v\in V\Gamma$, the faithful unital normal $*$-homomorphism $\rho_v\,:\,\mathcal{B}(\cH_v)\rightarrow\mathcal{B}(\cH)$ by $\rho_v(x)=U'_v(1\ot x)(U'_v)^*$ for all $x\in\cB(\cH_v)$. Observe the $\rho_v$ intertwines the vector states $\omega_{\xi_v}$ and $\omega_{\Omega}$. The analogue of Proposition \ref{Prop=commutation} holds. We leave the details to the reader.

\begin{prop}\label{Prop=commutationright} For all $v\in V\Gamma$ and all $x\in\cB(\cH_v)$ one has:
\begin{enumerate}
\item $\rho_v(x)\Omega=\mathcal{P}_v(x\xi_v)+\langle x\xi_v,\xi_v\rangle\Omega$.
\item $\rho_v(x)\xi=\mathcal{P}_v(x\xi)+\langle x\xi,\xi_v\rangle\Omega$ for all $\xi\in\cH_v^{\circ}$.
\item $\rho_v(x)\xi=\cQ_{\boldw v}(\xi\ot\mathcal{P}_v(x\xi_v))+\langle x\xi_v,\xi_v\rangle\xi$ for all $\boldw\in\cW^v$ and all $\xi\in\cH_\boldw$.
\item Let $\boldw\in(\cW^v)^c$ then there exists a unique $\boldw'_v\in\cW^v$ such that $\boldw\simeq \boldw'_vv$ and, for all $\xi\in\cH_\boldw$, one has
$$\rho_v(x)\xi=\cQ_{\boldw'_vv}(\id\ot\cP_vx)\cQ_{\boldw'_vv}^*\xi+(\id\ot\xi_v^*x)\cQ_{\boldw'_vv}^*\xi.$$
\end{enumerate}
Moreover, the images of $\rho_v$ and $\rho_{v'}$ commute whenever $(v,v')\in E\Gamma$.
\end{prop}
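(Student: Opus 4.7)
The plan is to run the proof of Proposition \ref{Prop=commutation} with left-insertions replaced by right-insertions throughout. Concretely, I would substitute $\cW_v \to \cW^v$, $v\boldw \to \boldw v$, $\cQ_{v\boldw}\to \cQ_{\boldw v}$, and swap the roles of the two tensor factors, while leaving all combinatorial reasoning about the graph $\Gamma$ untouched. This is legitimate because the graph is undirected, so $\Link(v)$ and the type II relation on $\cW$ are left/right-symmetric: a letter $v$ that can be pushed to the left through $\Link(v)$-letters can equally well be pushed to the right, and the equivalences \eqref{Eqn=Equivalences} do not distinguish direction.

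For items $(1)$--$(4)$, the definition of $U_v'$ is given summand by summand on $\cH'(v)\ot\cH_v$, so each formula is obtained by expanding $x\xi = \cP_v(x\xi) + \langle x\xi,\xi_v\rangle\xi_v$ (for the appropriate $\xi\in\cH_v$) and applying $U_v'$ to $\xi\ot x\xi_v$ in the relevant summand. For $(4)$ one uses that every $\boldw\in(\cW^v)^c$ is type II equivalent to a unique reduced word of the form $\boldw_v'\,v$ with $\boldw_v'\in\cW^v$, so that $\cH_\boldw = \cQ_{\boldw_v'\,v}(\cH_{\boldw_v'}\ot\cH_v^\circ)$; this is the right analogue of the decomposition used in Proposition \ref{Prop=commutation}(4).

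For the commutation statement, when $(v,v')\in E\Gamma$ I would again reduce to the case $\langle x\xi_v,\xi_v\rangle = \langle y\xi_{v'},\xi_{v'}\rangle = 0$ by subtracting scalar multiples of the identity, and then establish the right-sided analogue of Claim 1 by splitting on whether $\boldw\in\cW^v\cap\cW^{v'}$, $\boldw\in\cW^v\setminus\cW^{v'}$ (and its symmetric counterpart), or $\boldw\in(\cW^v)^c\cap(\cW^{v'})^c$. In each case I would pick a simple tensor basis vector $\xi = \xi_{u_1}\otimes\cdots\otimes\xi_{u_n}$ of $\cH_\boldw$, identify the rightmost admissible positions for inserting $v$ and $v'$, and observe that iterating in either order produces the same simple tensor because $(v,v')\in E\Gamma$ permits the adjacent transposition of $v$ and $v'$. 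The final commutation $\rho_v(x)\rho_{v'}(y) = \rho_{v'}(y)\rho_v(x)$ then assembles exactly as in the last block of the proof of Proposition \ref{Prop=commutation}.

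The main obstacle I expect is the bookkeeping in the third case of the right-sided Claim 1, where four auxiliary minimal words $\boldw_1,\boldw_1',\boldw_2,\boldw_2'$ must be introduced to describe the iterated factorisations $\boldw\simeq\boldw_1\,v'$, $\boldw_1\simeq\boldw_1'\,v$, $\boldw\simeq\boldw_2\,v$, $\boldw_2\simeq\boldw_2'\,v'$, and one must verify that the four resulting operator identities (the right-sided mirrors of those displayed in Claim 1) all hold. This is purely symbol-pushing but is the step where index or position errors are easiest to make.
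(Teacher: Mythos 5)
Your proposal is correct and follows exactly the route the paper intends: the paper itself disposes of Proposition \ref{Prop=commutationright} by stating that the analogue of Proposition \ref{Prop=commutation} holds and leaving the details to the reader, i.e.\ by the same left/right mirroring (replacing $\cW_v$, $v\boldw$, $\cQ_{v\boldw}$ by $\cW^v$, $\boldw v$, $\cQ_{\boldw v}$ and swapping tensor legs) that you describe, including the right-sided version of Claim 1. Your identification of the case $\boldw\in(\cW^{v})^c\cap(\cW^{v'})^c$ as the only bookkeeping-heavy step matches the structure of the original argument, so nothing further is needed.
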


\begin{prop}\label{Prop=commutant} Let $v,v'\in V\Gamma$ and $x\in\mathcal{B}(\cH_v)$, $y\in\mathcal{B}(\cH_{v'})$.  One has
$$\lambda_v(x)\rho_{v'}(y)= \rho_{v'}(y)\lambda_v(x)\quad\text{whenever}\quad (v\neq v')\,\,\,\text{or}\,\,\,(v=v'\,\,\text{and}\,\,xy=yx).$$
\end{prop}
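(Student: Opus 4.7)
The plan is to verify the identity $\lambda_v(x)\rho_{v'}(y)\xi = \rho_{v'}(y)\lambda_v(x)\xi$ on a spanning family of $\xi \in \cH$, using the explicit formulas of Propositions \ref{Prop=commutation} and \ref{Prop=commutationright}. A preliminary reduction writes $x = (x - \langle x\xi_v,\xi_v\rangle\id) + \langle x\xi_v,\xi_v\rangle\id$ and similarly for $y$, allowing one to assume $\langle x\xi_v,\xi_v\rangle = \langle y\xi_{v'},\xi_{v'}\rangle = 0$; in the subcase $v = v'$ this clearly preserves $xy = yx$. It then suffices to test the identity on $\xi = \Omega$, on $\cH_v^{\circ}$ and $\cH_{v'}^{\circ}$, and on $\cH_\boldw$ for each $\boldw \in \cWmin$ of length at least two.

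For the case $v \neq v'$ the argument proceeds by case distinction on the position of $\boldw$ with respect to $v$ and $v'$ simultaneously: whether $\boldw \in \cW_v$ or $\cW_v^c$, which governs how $\lambda_v(x)$ acts from the left by parts (3) or (4) of Proposition \ref{Prop=commutation}, and whether $\boldw \in \cW^{v'}$ or $(\cW^{v'})^c$, which governs how $\rho_{v'}(y)$ acts from the right by parts (3) or (4) of Proposition \ref{Prop=commutationright}. In the ``creation-creation'' subcase $\boldw \in \cW_v \cap \cW^{v'}$ both operators insert a new letter, and the required equality reduces to a compatibility of $\cQ$-maps: the minimal representative of the word $v\boldw v'$ and the unitary realizing it are independent of the order in which the two letters are inserted. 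The remaining subcases (two mixed ones and the ``annihilation-annihilation'' subcase $\boldw \in \cW_v^c \cap (\cW^{v'})^c$) are handled by first using part (4) of the relevant proposition to extract the active letter via the appropriate $\cQ$-map and then verifying that the residual bookkeeping commutes. The necessary $\cQ$-map identities are strict analogues of Claim 1 in the proof of Proposition \ref{Prop=commutation}, and each follows from the uniqueness clause of Lemma \ref{Lem=ReducedRep}(4): for $v \neq v'$ the ``front'' position of the minimal representative acted on by $\lambda_v$ and the ``back'' position acted on by $\rho_{v'}$ are independent.

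The case $v = v'$ with $xy = yx$ introduces only one genuinely new subcase, namely $\xi \in \cH_v^{\circ}$, where both operators act on the same vector via part (2) of the two propositions. A short computation using $\langle x\xi_v,\xi_v\rangle = \langle y\xi_v,\xi_v\rangle = 0$ shows
\[
\lambda_v(x)\rho_v(y)\xi \;=\; \cP_v(xy\xi) + \langle xy\xi,\xi_v\rangle\Omega \;=\; \rho_v(y)\lambda_v(x)\xi,
\]
where the middle equality invokes $xy = yx$. For all longer $\boldw$, the $v$-letter affected by $\lambda_v$ and the one affected by $\rho_v$ occupy distinct positions of the minimal representative, so the analysis is formally identical to the $v \neq v'$ case, with the hypothesis $xy = yx$ only intervening in the ``annihilation-annihilation'' subcase where scalars of the form $\xi_v^\ast x \otimes \id$ and $\id \otimes \xi_v^\ast y$ must be commuted past each other.

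The main obstacle is purely combinatorial: the careful bookkeeping of minimal representatives and associated $\cQ$-maps as letters $v$ and $v'$ are inserted or deleted at the front and back of $\boldw$. Each individual subcase is routine given the formulas in Propositions \ref{Prop=commutation} and \ref{Prop=commutationright} and the uniqueness part of Lemma \ref{Lem=ReducedRep}, but writing out all combinations is lengthy; in practice one would explicitly formulate the four $\cQ$-map compatibilities analogous to Claim 1, verify one representative subcase in full, and indicate that the others are treated in the same way.
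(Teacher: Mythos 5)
Your strategy coincides with the paper's: reduce to $\langle x\xi_v,\xi_v\rangle=\langle y\xi_{v'},\xi_{v'}\rangle=0$, check the identity on $\Omega$ and on each $\cH_\boldw$, split according to $\boldw\in\cW_v$ versus $\cW_v^c$ and $\boldw\in\cW^{v'}$ versus $(\cW^{v'})^c$, and reduce each subcase to $\cQ$-map compatibilities proved, as in Claim 1 of Proposition \ref{Prop=commutation}, from the uniqueness part of Lemma \ref{Lem=ReducedRep}; your direct computation on $\cH_v^\circ$ when $v=v'$ is correct, and indeed necessary, since the general bookkeeping degenerates for the one-letter word $(v)$.

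The gap is in your analysis of $v=v'$ on longer words. It is not true that the $v$-letter acted on by $\lambda_v(x)$ and the one acted on by $\rho_v(y)$ always occupy distinct tensor legs, nor that $xy=yx$ is needed only in the subcase $\boldw\in\cW_v^c\cap(\cW^{v})^c$. Take $v=v'$ and $\boldw$ all of whose letters lie in $\Link(v)$, say $\boldw=(u_1,u_2)$ with $u_1,u_2\in\Link(v)$. Then $\boldw\in\cW_v\cap\cW^{v}$, so in your scheme this is a ``creation--creation'' case; but $v\boldw v$ is \emph{not} reduced, so the compatibility you invoke (that the minimal representative of $v\boldw v'$ is independent of the order of insertion) only makes sense when $v\neq v'$ and says nothing here. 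What actually happens is that $\rho_v(y)$ creates a $v$-leg carrying $y\xi_v$, on which $\lambda_v(x)$ then acts by the annihilation-type formula of Proposition \ref{Prop=commutation}(4), and vice versa; comparing the two orders amounts to comparing $\cP_v(xy\xi_v)$ with $\cP_v(yx\xi_v)$ in one and the same leg, and this is exactly where $xy=yx$ is used. The same collision occurs in the subcase $\boldw\in\cW_v^c\cap(\cW^{v})^c$ when $\boldw$ contains exactly one letter equal to $v$ (with vector $\eta\in\cH_v^\circ$ in that leg) and all its other letters lie in $\Link(v)$: both operators act on that single leg, and after simplification the two sides differ by replacing $\cP_v(xy\eta)$ and $\langle xy\eta,\xi_v\rangle$ by $\cP_v(yx\eta)$ and $\langle yx\eta,\xi_v\rangle$ --- not by commuting scalar factors $(\xi_v^\ast x\ot\id)$ and $(\id\ot\xi_v^\ast y)$ past one another; in the annihilation--annihilation configuration where the two $v$-letters are genuinely distinct, no commutation hypothesis is needed at all. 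These two collision subcases are precisely the second bullet of item (1) and the first bullet of item (2) of the Claim in the paper's proof; as written, your sketch passes over them, so the half of the statement concerning $v=v'$ is not actually established.
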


\begin{proof}
We may and will assume that $\langle x\xi_v,\xi_v\rangle=0=\langle y\xi_{v'},\xi_{v'}\rangle$. By Propositions \ref{Prop=commutation} and \ref{Prop=commutationright}, one has
$$\lambda_v(x)\rho_{v'}(y)\Omega=\lambda_v(x)(y\xi_{v'})=\left\{\begin{array}{lcl}
\cQ_{vv'}(x\xi_v\otimes y\xi_{v'})&\text{if}&v\neq v',\\
\cP_v(xy\xi_v)+\langle xy\xi_v,\xi_v\rangle \Omega &\text{if}&v=v'.\end{array}\right.$$
Moreover, $\rho_{v'}(y)\lambda_v(x)\Omega=\rho_{v'}(y)(x\xi_{v})=\left\{\begin{array}{lcl}
\cQ_{vv'}(x\xi_v\otimes y\xi_{v'})&\text{if}&v\neq v',\\
\cP_v(yx\xi_v)+\langle yx\xi_v,\xi_v\rangle \Omega &\text{if}&v=v'.\end{array}\right.$.

\noindent To finish the proof we need the following Claim.

\noindent\textit{Claim. Let $v,v'\in V\Gamma$, $\boldw\in\cW_{\text{min}}$ and $\xi\in\cH_\boldw$.
\begin{enumerate}
\item Suppose that $\boldw\in\cW_v\cap\cW^{v'}$. Let $\boldw_1, \boldw_2\in\cW_{\text{min}}$ be such that $\boldw_1\simeq \boldw v'$ and $\boldw_2\simeq v\boldw$.
\begin{itemize}
\item If $v\boldw v'$ is reduced then $\boldw_1\in\cW_v$, $\boldw_2\in\cW^{v'}$ and, for all $\eta_v\in\cH_v$, $\eta_{v'}\in\cH_{v'}$, one has
$$\cQ_{v\boldw_1}(\eta_v\ot\cQ_{\boldw v'}(\xi\ot\eta_{v'}))=\cQ_{\boldw_2 v'}(\cQ_{v \boldw}(\eta_v\ot\xi)\ot\eta_{v'}).$$
\item If $v\boldw v'$ is not reduced then $v=v'$, $\boldw_1= \boldw_2\simeq v \boldw\simeq \boldw v\in\cW_v^c\cap(\cW^v)^c$ and, for all $x,y\in\mathcal{B}(\cH_v)$,
$$\cQ_{v \boldw}(\cP_vx\ot\id)\cQ_{v \boldw}^*\cQ_{\boldw v}(\xi\ot y\xi_v)=\cQ_{\boldw v}(\id\ot\cP_vy)\cQ_{\boldw v}^*\cQ_{v \boldw}(x\xi_v\ot\xi)\quad\text{and,}$$
$$(\xi_v^*x\ot\id)\cQ_{v\boldw}^*\cQ_{\boldw v}(\xi\ot y\xi_v)=(\id\ot\xi_v^*y)\cQ_{\boldw v}^*\cQ_{v \boldw}(x\xi_v\ot\xi).$$
\end{itemize}
\item Suppose that $\boldw\in\cW_v^c\cap(\cW^{v'})^c$. Let $\boldw_1, \boldw_2\in\cW_{\text{min}}$ be such that $\boldw\simeq \boldw_1v'\simeq v\boldw_2$.
\begin{itemize}
\item If $\boldw_1\in\cW_v$ then $v=v'$, $\boldw_1\in\cW^v$, $w\simeq v \boldw_1\simeq \boldw_1 v$ and $\boldw_1=\boldw_2$ and, for all $x,y\in\cB(\cH_v)$,
    \[
    \begin{split}
  &\cQ_{v\boldw_1}(x\xi_v\ot(\id\ot\xi_v^*y)\cQ_{\boldw_1v}^*\xi)+\cQ_{v \boldw_1}(\cP_vx\ot\id)\cQ_{v\boldw_1}^*\cQ_{\boldw_1v}(\id\ot\cP_vy)\cQ_{\boldw_1v}^*\xi \\
  =&\cQ_{\boldw_1v}((\xi_v^*x\ot\id)\cQ_{v\boldw_1}^*\xi\ot y\xi_v)+\cQ_{\boldw_1v}(\id\ot\cP_vy)\cQ_{\boldw_1v}^*\cQ_{v\boldw_1}(\cP_vx\ot\id)\cQ_{v\boldw_1}^*\xi,
  \end{split}
  \]
$$(\xi_v^*x\ot\id)\cQ_{v\boldw_1}^*\cQ_{\boldw_1v}(\id\ot\cP_vy)\cQ_{\boldw_1v}^*\xi=(\id\ot\xi_v^*y)\cQ_{\boldw_1v}^*\cQ_{v\boldw_1}(\cP_vx\ot\id)\cQ_{v\boldw_1}^*\xi.$$
\item If $\boldw_1\in\cW_v^c$ write $\boldw_1\simeq v\boldw_3$, $\boldw_3\in\cW_{\text{min}}$ then $\boldw_2\simeq \boldw_3v'\in(\cW^{v'})^c$ and, $\forall x\in\cB(\cH_v)$, $y\in\cB(\cH_{v'})$,
$$\cQ_{v\boldw_2}(\cP_vx\ot\id)\cQ_{v\boldw_2}^*\cQ_{\boldw_1v'}(\id\ot\cP_{v'}y)\cQ_{\boldw_1v'}^*\xi=\cQ_{\boldw_1v'}(\id\ot\cP_{v'}y)\cQ_{\boldw_1v'}^*\cQ_{v\boldw_2}(\cP_{v}x\ot\id)\cQ_{v\boldw_2}^*\xi,$$
$$(\xi_v^*x\ot\id)\cQ_{v\boldw_2}^*\cQ_{\boldw_1v'}(\id\ot\cP_{v'}y)\cQ_{\boldw_1v'}^*\xi=\cQ_{\boldw_3v'}(\id\ot\cP_{v'}y)\cQ_{\boldw_3v'}^*(\xi_{v}^*x\ot\id)\cQ_{v\boldw_2}^*\xi,$$
$$\cQ_{v\boldw_3}(\cP_vx\ot\id)\cQ_{v\boldw_3}^*(\id\ot\xi_{v'}^*y)\cQ_{\boldw_1v'}^*\xi=(\id\ot\xi_{v'}^*y)\cQ_{\boldw_1v'}^*\cQ_{v\boldw_2}(\cP_{v}x\ot\id)\cQ_{v\boldw_2}^*\xi,$$
$$(\xi_v^*x\ot\id)\cQ_{v\boldw_3}^*(\id\ot\xi_{v'}^*y)\cQ_{\boldw_1v'}^*\xi=(\id\ot\xi_{v'}^*y)\cQ_{\boldw_3v'}^*(\xi_{v}^*x\ot\id)\cQ_{v\boldw_2}^*\xi.$$
\end{itemize}
\item If $\boldw\in\cW_v^c\cap\cW^{v'}$ write $\boldw_1\simeq \boldw v'$, $w\simeq v\boldw_2$, $\boldw_1,\boldw_2\in\cW_{\text{min}}$. Then, $\boldw_1\in\cW_v^c$, $\boldw_2\in\cW^{v'}$ and, if $\boldw_3\in\cW_{\text{min}}$ is such that $\boldw_1\simeq v \boldw_3$, then we have $\boldw_2 v'\simeq \boldw_3$ and, for all $x\in\cB(\cH_v)$, $y \in \cB(\cH_{v'})$,
$$\cQ_{v\boldw_3}(\cP_vx\ot\id)\cQ_{v \boldw_3}^*\cQ_{\boldw v'}(\xi\ot y\xi_{v'})=\cQ_{\boldw v'}\left(\cQ_{v \boldw_2}(\cP_vx\ot\id)\cQ_{v \boldw_2}^*\xi\ot y\xi_{v'}\right),$$
$$(\xi_v^*x\ot\id)\cQ_{v \boldw_3}^*\cQ_{\boldw v'}(\xi\ot y\xi_{v'})=\cQ_{\boldw_2 v'}(\xi_v^*x\ot\id)\cQ_{v \boldw_2}^*\xi \otimes y \xi_{v'}.$$
\item If $\boldw\in\cW_v\cap(\cW^{v'})^c$ write $\boldw\simeq w_1v'$, $\boldw_2\simeq v \boldw$, $\boldw_1,\boldw_2\in\cW_{\text{min}}$. Then, $\boldw_1\in\cW_v$, $\boldw_2\in(\cW^{v'})^c$ and, if $\boldw_3\in\cW_{\text{min}}$ is such that $\boldw_2\simeq \boldw_3v'$, then we have $\boldw_3\simeq v \boldw_1$ and, for all $x\in\cB(\cH_v)$, $y \in \cB(\cH_{v'})$,
$$\cQ_{v\boldw}\left(x\xi_v\ot\cQ_{\boldw_1 v'}(\id\ot\cP_{v'}y)\cQ_{\boldw_1 v'}^*\xi\right)=\cQ_{\boldw_3 v'}(\id\ot\cP_{v'}y)\cQ_{\boldw_3 v'}^*\cQ_{v \boldw}(x\xi_v\ot\xi),$$
$$\cQ_{v \boldw_1}(x\xi_v\ot(\id\ot\xi_{v'}^*y)\cQ_{\boldw_1 v'}^*\xi)=(\id\ot\xi_{v'}^*y)\cQ_{\boldw_3 v'}^*\cQ_{v \boldw}(x\xi_v\ot\xi).$$
\end{enumerate}}

\vspace{0.2cm}

\noindent The proof of the Claim is analogous to the proof of the Claim used in the proof of Proposition \ref{Prop=commutation} and we shall leave the details to the reader.

\vspace{0.3cm}

\noindent\textit{Remainder of the proof of the Proposition.} Let $\boldw\in\cW_{\text{min}}$ and $\xi\in\cH_\boldw$. We use freely the results and notations of the Claim and Propositions \ref{Prop=commutation} and \ref{Prop=commutationright}.

\vspace{0.2cm}

\noindent\textbf{Case 1: $\boldw\in\cW_v\cap\cW^{v'}$}. If moreover $v\boldw v'$ is reduced we have,
\begin{eqnarray*}
\lambda_v(x)\rho_{v'}(y)\xi&=&\lambda_v(x)(\cQ_{\boldw v'}(\xi\ot y\xi_{v'}))=\cQ_{v \boldw_1}(x\xi_v\ot\cQ_{\boldw v'}(\xi\ot y\xi_{v'})),\\
\rho_{v'}(y)\lambda_v(x)\xi&=&\rho_{v'}(y)(\cQ_{v \boldw}(x\xi_v\ot\xi))=\cQ_{\boldw_2 v'}(\cQ_{v \boldw}(x\xi_v\ot\xi)\ot y\xi_{v'}).
\end{eqnarray*}
These two expressions are equal by the Claim. Suppose now that $v\boldw v'$ is not reduced. Then $v=v'$ and,
\begin{eqnarray*}
\lambda_v(x)\rho_{v}(y)\xi&=&\lambda_v(x)(\cQ_{\boldw v}(\xi\ot y\xi_{v}))=\cQ_{v \boldw}(\cP_vx\ot\id)\cQ_{v \boldw}^*\cQ_{\boldw v}(\xi\ot y\xi_v)
+(\xi_v^*x\ot\id)\cQ_{v \boldw}^*\cQ_{\boldw v}(\xi\ot y\xi_v),\\
\rho_v(y)\lambda_v(x)\xi&=&\rho_v(y)(\cQ_{v \boldw}(x\xi_v\ot\xi))=\cQ_{\boldw v}(\id\ot\cP_vy)\cQ_{\boldw v}^*\cQ_{v \boldw}(x\xi_v\ot\xi)
+(\id\ot\xi_v^*y)\cQ_{\boldw v}^*\cQ_{v \boldw}(x\xi_v\ot\xi).
\end{eqnarray*}
These two expressions are equal by the Claim.

\vspace{0.2cm}

\noindent\textbf{Case 2: $\boldw \in\cW_v^c\cap(\cW^{v'})^c$}. If moreover $\boldw_1\in\cW_v$ then $v=v'$, $\boldw_1=\boldw_2\in\cW^v$, $\boldw\simeq v \boldw_1\simeq \boldw_1v$ and,
\begin{eqnarray*}
\lambda_v(x)\rho_v(y)\xi&=&\lambda_v(x)\left(\cQ_{\boldw_1 v}(\id\ot\cP_vy)\cQ_{\boldw_1 v}^*\xi+(\id\ot\xi_v^*y)\cQ_{\boldw_1 v}^*\xi\right)\\
&=&\cQ_{v \boldw_1}(\cP_vx\ot\id)\cQ_{v \boldw_1}^*\cQ_{\boldw_1 v}(\id\ot\cP_vy)\cQ_{\boldw_1 v}^*\xi
+(\xi_v^*x\ot\id)\cQ_{v \boldw_1}^*\cQ_{\boldw_1 v}(\id\ot\cP_vy)\cQ_{\boldw_1 v}^*\xi\\
&&+\cQ_{v \boldw_1}(x\xi_v\ot(\id\ot\xi_v^*y)\cQ_{\boldw_1 v}^*\xi),\\
\rho_v(y)\lambda_v(x)\xi&=&\rho_v(y)\left(\cQ_{v \boldw_1}(\cP_vx\ot\id)\cQ_{v \boldw_1}^*\xi+(\xi_v^*x\ot\id)\cQ_{v \boldw_1}^*\xi\right)\\
&=&\cQ_{\boldw_1 v}(\id\ot\cP_vy)\cQ_{\boldw_1 v}^*\cQ_{v \boldw_1}(\cP_vx\ot\id)\cQ_{v \boldw_1}^*\xi
+(\id\ot\xi_v^*y)\cQ_{\boldw_1 v}^*\cQ_{v \boldw_1}(\cP_vx\ot\id)\cQ_{v\boldw_1}^*\xi\\
&&+\cQ_{\boldw_1 v}((\xi_v^*x\ot\id)\cQ_{v \boldw_1}^*\xi\ot y\xi_v).\\
\end{eqnarray*}
These two expressions are equal by the Claim. Suppose now that $\boldw_1\in\cW_{v}^c$, $\boldw_1\simeq v \boldw_3$, $\boldw_3\in\cW_{\text{min}}$. We have:
\begin{eqnarray*}
\lambda_v(x)\rho_{v'}(y)\xi&=&\lambda_v(x)\left(\cQ_{\boldw_1 v'}(\id\ot\cP_{v'}y)\cQ_{\boldw_1 v'}^*\xi+(\id\ot\xi_{v'}^*y)\cQ_{\boldw_1v'}^*\xi\right)\\
&=&\cQ_{v\boldw_2}(\cP_vx\ot\id)\cQ_{v \boldw_2}^*\cQ_{\boldw_1v'}(\id\ot\cP_{v'}y)\cQ_{\boldw_1v'}^*\xi
+(\xi_v^*x\ot\id)\cQ_{v \boldw_2}^*\cQ_{\boldw_1 v'}(\id\ot\cP_{v'}y)\cQ_{\boldw_1 v'}^*\xi\\
&&+\cQ_{v\boldw_3}(\cP_vx\ot\id)\cQ_{v \boldw_3}^*(\id\ot\xi_{v'}^*y)\cQ_{\boldw_1 v'}^*\xi+(\xi_v^*x\ot\id)\cQ_{v \boldw_3}^*(\id\ot\xi_{v'}^*y)\cQ_{\boldw_1v'}^*\xi.
\end{eqnarray*}
Moreover, since $\boldw_2\in(\cW^{v'})^c$ and $\boldw_2\simeq \boldw_3v'$, we find,
\begin{eqnarray*}
\rho_{v'}(y)\lambda_v(x)\xi&=&\rho_{v'}(y)\left(\cQ_{v \boldw_2}(\cP_vx\ot\id)\cQ_{v \boldw_2}^*\xi+(\xi_v^*x\ot\id)\cQ_{v \boldw_2}^*\xi\right)\\
&=&\cQ_{\boldw_1 v'}(\id\ot\cP_{v'}y)\cQ_{\boldw_1 v'}^*\cQ_{v\boldw_2}(\cP_{v}x\ot\id)\cQ_{v \boldw_2}^*\xi
+(\id\ot\xi_{v'}^*y)\cQ_{\boldw_1 v'}^*\cQ_{v \boldw_2}(\cP_{v}x\ot\id)\cQ_{v \boldw_2}^*\xi\\
&&+\cQ_{\boldw_3 v'}(\id\ot\cP_{v'}y)\cQ_{\boldw_3 v'}^*(\xi_{v}^*x\ot\id)\cQ_{v\boldw_2}^*\xi+(\id\ot\xi_{v'}^*y)\cQ_{\boldw_3v'}^*(\xi_{v}^*x\ot\id)\cQ_{v\boldw_2}^*\xi.
\end{eqnarray*}
These two expressions are equal by the Claim.
\vspace{0.2cm}

\noindent\textbf{Case 3: $\boldw\in\cW_v^c\cap\cW^{v'}$}. We have,
\begin{eqnarray*}
\lambda_v(x)\rho_{v'}(y)\xi&=&\lambda_v(x)(\cQ_{\boldw v'}(\xi\ot y\xi_{v'}))=\cQ_{v \boldw_3}(\cP_vx\ot\id)\cQ_{v \boldw_3}^*\cQ_{\boldw v'}(\xi\ot y\xi_{v'})
+(\xi_v^*x\ot\id)\cQ_{v \boldw_3}^*\cQ_{\boldw v'}(\xi\ot y\xi_{v'}),\\
\rho_{v'}(y)\lambda_v(x)\xi&=&\rho_{v'}(y)\left(\cQ_{v \boldw_2}(\cP_vx\ot\id)\cQ_{v \boldw_2}^*\xi+(\xi_v^*x\ot\id)\cQ_{v\boldw_2}^*\xi\right)\\
&=&\cQ_{\boldw v'}\left(\cQ_{v \boldw_2}(\cP_vx\ot\id)\cQ_{v \boldw_2}^*\xi\ot y\xi_{v'}\right)
+\cQ_{\boldw_2 v'}(\xi_v^*x\ot\id)\cQ_{v \boldw_2}^*\xi \otimes y \xi_{v'}.
\end{eqnarray*}
These two expressions are equal by the Claim.

\vspace{0.2cm}

\noindent\textbf{Case 4: $\boldw\in\cW_v\cap(\cW^{v'})^c$}. We have,
\begin{eqnarray*}
\lambda_v(x)\rho_{v'}(y)\xi&=&\lambda_v(x)\left(\cQ_{\boldw_1 v'}(\id\ot\cP_{v'}y)\cQ_{\boldw_1 v'}^*\xi+(\id\ot\xi_{v'}^*y)\cQ_{\boldw_1 v'}^*\xi\right)\\
&=&\cQ_{v \boldw}\left(x\xi_v\ot\cQ_{\boldw_1v'}(\id\ot\cP_{v'}y)\cQ_{\boldw_1v'}^*\xi\right)+\cQ_{v\boldw_1}(x\xi_v\ot(\id\ot\xi_{v'}^*y)\cQ_{\boldw_1v'}^*\xi),\\
\rho_{v'}(y)\lambda_v(x)\xi&=&\rho_{v'}(y)(\cQ_{v\boldw}(x\xi_v\ot\xi))
=\cQ_{\boldw_3 v'}(\id\ot\cP_{v'}y)\cQ_{\boldw_3v'}^*\cQ_{v\boldw}(x\xi_v\ot\xi)+(\id\ot\xi_{v'}^*y)\cQ_{\boldw_3 v'}^*\cQ_{v\boldw}(x\xi_v\ot\xi).
\end{eqnarray*}
These two expressions are equal by the Claim.
\end{proof}

%%%%%%%%%%%%%%%%%%%%%%%%%%%%%%%%%%%%%%%%%%%
\subsection{The graph product C*-algebra}
%%%%%%%%%%%%%%%%%%%%%%%%%%%%%%%%%%%%%%%%%%%

For all $v\in V\Gamma$, let $\bA_v$ be a unital C*-algebra.

%%%%%%%%%%%%%%%%%%%%%%%%%%%%%%%%%%%%%%%%%%%
\subsubsection{The maximal graph product C$^\ast$-algebra}
%%%%%%%%%%%%%%%%%%%%%%%%%%%%%%%%%%%%%%%%%%%

\begin{dfn}
The maximal graph product C*-algebra $\bA_{\Gamma,m}$ is the universal unital C*-algebra generated by the C*-algebras $\bA_v$, for $v\in V\Gamma$ and the relations
$$a_va_{v'}=a_{v'}a_v\quad\text{for all}\quad a_v\in\bA_v,a_{v'}\in\bA_{v'}\quad\text{whenever}\quad(v,v')\in E\Gamma.$$
Here the unit of $\bA_{\Gamma,m}$ is presumed to agree with the unit of each $\bA_v$.
\end{dfn}

\begin{rmk}
It is clear that $\bA_{\Gamma,m}$ is not $\{0\}$ i.e. that the relations admit a non-trivial representation as bounded operators. Indeed, for any family of representations $\pi_v\,:\,\bA_v\rightarrow\cB(\cH_v)$ and any family of norm one vectors $\xi_v\in\cH_v$, the representations $\widetilde{\pi}_v=\lambda_v\circ\pi_v\,:\,\bA_v\rightarrow\cB(\cH)$, where $\cH$ is the graph product Hilbert space of the family of pointed Hilbert spaces $(\cH_v,\xi_v)_{v\in V\Gamma}$ and $\lambda_v\,:\,\cB(\cH_v)\rightarrow\cB(\cH)$ are the unital faithful morphisms defined in Section \ref{Section=GraphProductHilbert}, satisfy the relations $\widetilde{\pi}_{v}(a_v)\widetilde{\pi}_{v'}(a_{v'})=\widetilde{\pi}_{v'}(a_{v'})\widetilde{\pi}_{v}(a_v)$ for all $a_v\in\bA_v,a_{v'}\in\bA_{v'}$ and all $v,v'\in V\Gamma$ such that $(v,v')\in E\Gamma$ by Proposition \ref{Prop=commutation}. The associated representation $\pi\,:\,\bA_{\Gamma,m}\rightarrow\cB(\cH)$ such that $\pi|_{\bA_v}=\widetilde{\pi}_v$ for all $v\in V\Gamma$ obtained by the universal property is called the \textit{graph product representation}.
\end{rmk}

\begin{example}
Using the universal property of $\bA_{¤\Gamma,m}$ one can easily check the following statements.
\begin{itemize}
\item Let $\bA_v=C_m^*(G_v)$ be the maximal C*-algebra of a discrete group $G_v$, $v\in V\Gamma$. Then $\bA_{\Gamma,m}=C^*_m(G_\Gamma)$.
\item Let $\Gamma$ be a finite graph having every two vertices connected by one edge. Then $\bA_{\Gamma,m}= \underset{v \in V\Gamma, \textrm{max}}{\ot} \bA_{v}$.
\item Let $\Gamma$ be a graph with no edges. Then $\bA_{\Gamma,m}=\underset{v\in V\Gamma}{*^m}\bA_v$, where $*^m$ denotes the maximal free product.
\item If $\Gamma=\Star(v)$ then $\bA_{\Gamma,m}$ is a quotient of $\left(\underset{w\in\Link(v)}{*^m} \bA_w\right)\underset{\text{max}}{\otimes} \bA_v$.
\end{itemize}
\end{example}

\begin{rmk}\label{Rmk=MaxDenseSubalgebra}
Let $\mathcal{A}\subset\bA_{\Gamma,m}$ be the linear span of elements of the form $a_1\dots a_n$ with $n\geq 1$ and $a_k\in\bA_{v_k}$, where $\boldv=(v_1,\dots,v_n)$ is a reduced word. Observe that $\mathcal{A}$ is a dense $*$-subalgebra of $\bA_{\Gamma,m}$. Indeed, the commutation relations defining $\bA_{\Gamma,m}$ show that $\mathcal{A}$ is a $*$-subalgebra. It is dense since it contains all the $\bA_v$. Moreover, if $a=a_1\dots a_n\in\bA_{\Gamma,m}$ with $n\geq 1$, $a_k\in\bA_{v_k}$ and $\boldv=(v_1,\dots,v_n)$ is a reduced word and if $\boldw=(w_1,\dots w_n)$ is a reduced word (type ${\rm II}$) equivalent to $\boldv$ it follows from the commutation relations that $a=a_{\sigma(1)}\dots a_{\sigma(n)}$, where $\sigma\in S_n$ is the unique permutation such that $\boldw=\sigma(\boldv)$ defined in Lemma \ref{Lem=ReducedRep} \eqref{Item=ReducedRepIV}.
\end{rmk}

%%%%%%%%%%%%%%%%%%%%%%%%%%%%%%%%%%%%%%%%%%%
\subsubsection{The reduced graph product C$^\ast$-algebra}
%%%%%%%%%%%%%%%%%%%%%%%%%%%%%%%%%%%%%%%%%%%

From this point we assume that each unital C$^\ast$-algebra $\bA_v, v \in V \Gamma$ is equipped with a GNS-faithful state $\omega_v$. Since the GNS-representation is faithful we may assume that $\bA_v\subset\cB(\cH_v)$, where $(\cH_v,\id,\xi_v)$ is a GNS-construction for $\omega_v$. Let $(\cH,\Omega)$ be the graph product of the pointed Hilbert spaces $(\cH_v,\omega_v)$. Recall that $\cH$ comes with faithful unital normal $*$-homomorphisms $\lambda_v\,:\,\cB(\cH_v)\rightarrow\cB(\cH)$.

\begin{dfn}\label{Dfn=GraphProduct}
The \textit{reduced graph product} C*-algebra $\bA_\Gamma$ is defined as the sub-C*-algebra of $\cB(\cH)$ generated by $\bigcup_{v\in V\Gamma}\lambda_v(\bA_v)$.
\end{dfn}

\noindent Since the $\lambda_v$ are faithful, we may and will assume that $\bA_v\subset\bA_\Gamma$ and $\lambda_v|_{\bA_v}$ is the inclusion for all $v\in V\Gamma$.
\begin{rmk}
 It follows from Proposition \ref{Prop=commutation} that there exists a unique unital surjective $*$-homomorphism $\lambda_\Gamma\,:\,\bA_{\Gamma,m}\rightarrow\bA_\Gamma$ such that $\lambda_\Gamma(a)=a$ for all $a\in\bA_v$ and all $v\in V\Gamma$. Moreover, suppose that $a=a_1\dots a_n\in\bA$ with  $a_k\in\bA_{v_k}$ and $\boldv=(v_1,\dots,v_n)$ a reduced word. Let $\boldw=(w_1,\dots w_n)$ be a reduced word that is equivalent to $\boldv$. Let $\sigma \in S_n$ be the permutation obtained from Lemma \ref{Lem=ReducedRep} \eqref{Item=ReducedRepIV} using the words $\boldv$ and $\boldw$ instead of $\boldw$ and $\boldw'$ respectively. Then it follows from the commutation relations that $a=a_{\sigma(1)}\dots a_{\sigma(n)}$.
\end{rmk}

\begin{dfn} An operator $a=a_1\dots a_n\in \bA_\Gamma$ is called \textit{reduced} if $a_i\in \bA_{v_i}^{\circ}$ with $\bA_{v_i}^{\circ} = \{ x \in \bA_{v_i} \mid \omega_{v_i}(x) = 0\}$ and the word $\boldv=(v_1,\dots,v_n)$ is reduced. The word $\boldv$ is called the \textit{associated word}.
\end{dfn}

\noindent Observe that the linear span of $1$ and the reduced operators in a dense $*$-subalgebra of $\bA_\Gamma$.

\begin{rmk}\label{Rmk=MaxReducedOp}
For all $v\in V\Gamma$, let $\omega_v$ be a not necessarily GNS-faithful state on $\bA_v$. The notion of reduced operators, relative to the family of states $(\omega_v)_{v\in V\Gamma}$, also makes sense in the maximal graph product C*-algebra and the linear span of $1$ and the reduced operators in the maximal graph product C*-algebra is the $*$-algebra $\mathcal{A}$ introduced in Remark \ref{Rmk=MaxDenseSubalgebra}, which is dense.
\end{rmk}

\noindent It is clear from Proposition \ref{Prop=commutation} that, whenever $a=a_1\dots a_n\in \bA_\Gamma$ is a reduced operator (with associated word in $\mathcal{W}_{\text{min}}$) one has $a\Omega=\widehat{a}_1\otimes\dots\otimes\widehat{a}_n$. Hence, the vector $\Omega$ is cyclic for $\bA_\Gamma$ and $(\mathcal{H},\id,\Omega)$ is a GNS-construction for the (GNS-faithful) state $\omega_\Gamma(\: \cdot \:) =\langle \:\cdot\: \Omega,\Omega\rangle$. We call $\omega_\Gamma$ the \textit{graph product state}. It can be characterized as follows: it is the unique state on $\bA_\Gamma$ satisfying $\omega_\Gamma(a)=0$ for all reduced operators $a\in \bA_{\Gamma}$. In particular, $\omega_\Gamma|_{\bA_v}=\omega_v$ for all $v\in V\Gamma$. Actually the commutation relations and the properties of the graph product state determine the graph product C*-algebra.

\begin{prop}\label{Prop=UniversalPropertyReduced}
Let $\bB$ be a unital C*-algebra with a GNS-faithful state $\omega$ and suppose that, for all $v\in V\Gamma$, there exists a unital faithful $*$-homomorphism $\pi_v\,:\,\bA_v\rightarrow \bB$ such that:
\begin{itemize}
\item $\bB$ is generated, as a C*-algebra, by $\cup_{v\in V\Gamma}\pi_v(\bA_v)$ and the images of $\pi_v$ and $\pi_{v'}$ commute whenever $(v,v')\in E\Gamma$.
\item For any operator $a=\pi_{v_1}(a_1)\dots\pi_{v_n}(a_n)\in \bB$, where $\boldv=(v_1,\dots v_n)$ is a reduced word and $a_i\in \bA_{v_i}^{\circ}$ one has $\omega(a)=0$
\end{itemize}
Then, there exists a unique $*$-isomorphism $\pi\,:\,\bA_\Gamma\rightarrow \bB$ such that $\pi|_{\bA_v}=\pi_v$. Moreover, $\pi$ intertwines the graph product state and $\omega$.
\end{prop}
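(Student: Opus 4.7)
The plan is to push everything through the universal maximal graph product $\bA_{\Gamma,m}$ and then argue via GNS equivalence. First, by the universal property of $\bA_{\Gamma,m}$, the fact that the $\pi_v$ are unital $*$-homomorphisms into $\bB$ satisfying the graph commutation relations yields a unique unital $*$-homomorphism $\widetilde\pi:\bA_{\Gamma,m}\to\bB$ with $\widetilde\pi|_{\bA_v}=\pi_v$. Since its image is a $*$-subalgebra containing the generators $\cup_v\pi_v(\bA_v)$ and since $*$-homomorphisms between C$^\ast$-algebras have closed range, $\widetilde\pi$ is surjective. In parallel, the map $\lambda_\Gamma:\bA_{\Gamma,m}\to\bA_\Gamma$ from the earlier remark is surjective by construction.

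Next I will compare the two states $\omega_\Gamma\circ\lambda_\Gamma$ and $\omega\circ\widetilde\pi$ on $\bA_{\Gamma,m}$. By Remark \ref{Rmk=MaxReducedOp}, the span of $1$ and the reduced operators is a dense $*$-subalgebra of $\bA_{\Gamma,m}$. On reduced operators $a=a_1\dots a_n$ (with $a_i\in\bA_{v_i}^{\circ}$ and $\boldv$ reduced), the first state vanishes by the characterising property of $\omega_\Gamma$ recalled just before Proposition \ref{Prop=UniversalPropertyReduced}, while the second vanishes by the second bullet of our hypothesis, since $\widetilde\pi(a)=\pi_{v_1}(a_1)\dots\pi_{v_n}(a_n)$ and $a_i\in\bA_{v_i}^{\circ}$. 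Both states take the value $1$ on $1$, so they agree on the dense $*$-subalgebra and therefore everywhere by norm continuity.

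Now I invoke the GNS machinery. Let $(\cK,\pi_\omega,\xi)$ be the GNS construction for $(\bB,\omega)$. The composition $\pi_\omega\circ\widetilde\pi$ is a cyclic $*$-representation of $\bA_{\Gamma,m}$ with cyclic vector $\xi$ implementing the state $\omega\circ\widetilde\pi$; likewise, the concrete representation $\lambda_\Gamma$ on $(\cH,\Omega)$ is cyclic and implements $\omega_\Gamma\circ\lambda_\Gamma$. By uniqueness of GNS representations and the equality of the two states, there is a unique unitary $U:\cH\to\cK$ with $U\Omega=\xi$ and $U\lambda_\Gamma(x)=\pi_\omega(\widetilde\pi(x))U$ for every $x\in\bA_{\Gamma,m}$. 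In particular $\ker\lambda_\Gamma=\ker(\pi_\omega\circ\widetilde\pi)=\ker\widetilde\pi$, since $\omega$ is GNS-faithful on $\bB$ and hence $\pi_\omega$ is injective. Consequently $\widetilde\pi$ factors uniquely through $\lambda_\Gamma$ to yield a $*$-homomorphism $\pi:\bA_\Gamma\to\bB$ with $\pi\circ\lambda_\Gamma=\widetilde\pi$; this $\pi$ is surjective because $\widetilde\pi$ is, injective because the kernels match, and satisfies $\pi|_{\bA_v}=\pi_v$. State intertwining is immediate: $\omega\circ\pi\circ\lambda_\Gamma=\omega\circ\widetilde\pi=\omega_\Gamma\circ\lambda_\Gamma$, and surjectivity of $\lambda_\Gamma$ gives $\omega\circ\pi=\omega_\Gamma$. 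Uniqueness of $\pi$ follows because any two candidates agree on the generating set $\cup_v\bA_v$.

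The only genuinely substantive step is matching the two states on the dense subalgebra of reduced operators; all the work here is absorbed by Remark \ref{Rmk=MaxDenseSubalgebra} together with the reduced-operator characterisation of $\omega_\Gamma$. The GNS-based transfer of the isomorphism from $\bA_{\Gamma,m}$ down to $\bA_\Gamma$ is then essentially automatic, and avoids having to verify the Wick-type inner product identities directly.
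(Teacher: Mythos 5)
Your proof is correct, but it is packaged differently from the paper's. The paper argues spatially and directly: realizing $\bB\subset\cB(\cK)$ via the GNS construction of $\omega$, it defines a unitary $V:\cH\rightarrow\cK$ by sending $\Omega\mapsto\eta$ and $a_1\dots a_n\Omega\mapsto\pi_{v_1}(a_1)\dots\pi_{v_n}(a_n)\eta$ for reduced operators, checks (as a ``routine'' verification) that $V$ is well defined, isometric and has dense range, and then sets $\pi(\:\cdot\:)=V\:\cdot\:V^*$. You instead lift everything to the maximal graph product $\bA_{\Gamma,m}$, use its universal property to get the surjection $\widetilde\pi$, show that $\omega\circ\widetilde\pi=\omega_\Gamma\circ\lambda_\Gamma$ by evaluating both on the dense $*$-subalgebra spanned by $1$ and the reduced operators (Remarks \ref{Rmk=MaxDenseSubalgebra} and \ref{Rmk=MaxReducedOp}), and then invoke abstract GNS uniqueness plus GNS-faithfulness of $\omega$ to match kernels and factor $\widetilde\pi$ through $\lambda_\Gamma$. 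The underlying computation is the same in both arguments --- it is exactly the fact that the span of $1$ and the reduced operators is a $*$-algebra on which both states vanish outside $\bc 1$, which is what makes the paper's isometry claim work --- but your version makes the well-definedness issue disappear into the GNS uniqueness theorem and the quotient construction, at the cost of invoking the maximal graph product machinery; the paper's version is more hands-on and produces the intertwining unitary explicitly (which is also what lets the authors reuse the same spatial argument verbatim for the von Neumann algebraic analogue, Proposition \ref{Prop=UniversalPropertyvN}). All the steps you use (surjectivity of $\widetilde\pi$ from closed range, cyclicity of $\Omega$ and $\xi$, injectivity of $\pi_\omega$, uniqueness from agreement on generators) are sound, so there is no gap.
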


\begin{proof}
The proof is a routine. We include it for the convenience of the reader. The uniqueness being obvious, let us show the existence. Since $\omega$ is GNS-faithful we may and will assume that $B\subset\cB(\cK)$ and $(\cK,\id,\eta)$ is a GNS-construction for $\omega$. Define $V\,:\,\cH\rightarrow \cK$ by $V(\Omega)=\eta$ and,
$$V(a_1\dots a_n)\Omega=\pi_{v_1}(a_1)\dots\pi_{v_n}(a_n)\eta\quad\text{for all reduced}\,\,a=a_1\dots a_n\in\bA_\Gamma\,\,\text{with associated word}\,\,(v_1,\dots,v_n).$$
It is easy to check that $V$ is well defined and isometric hence, it extends to an isometry. Since it also has a dense image, it is a unitary. Then, $\pi(x):=VxV^*$ does the job.
\end{proof}

\begin{rmk} Proposition \ref{Prop=UniversalPropertyReduced} implies the following.
\begin{itemize}
\item Let $\bA_v = C^\ast_r(G_v)$ be the reduced group C$^\ast$-algebra of a discrete group $G_v, v \in V\Gamma$. Then $(\bA_\Gamma,\omega_\Gamma) = (C_r^\ast(G_\Gamma),\tau)$, where $\tau$ is the canonical trace on the reduced C$^\ast$-algebra of the graph product group $G_\Gamma$.
\item Let $\Gamma$ be a graph in which every two vertices are connected by one edge. Then $(\bA_\Gamma,\omega_\Gamma)= \underset{v \in V\Gamma}{\ot} (\bA_{v},\omega_v)$.
\item Let $\Gamma$ be a graph with no edges. Then $(\bA_{\Gamma},\omega_\Gamma)=\underset{v\in V\Gamma}{*}(\bA_v,\omega_v)$, the reduced free product with respect to the GNS-faitfhul states $\omega_v$, $v\in V\Gamma$.
\item If $\Gamma_0\subset\Gamma$ is a subgraph and, for all $v\in V_{\Gamma_0}$, $\bB_v\subset \bA_v$ is a unital C*-algebra then the sub-C*-algebra of $\bA_\Gamma$ generated by $\cup_{v\in V\Gamma_0} \bB_v$ is canonically isomorphic to graph product C*-algebras $\bB_{\Gamma_0}$ obtained from $\bB_v$, $v\in V_{\Gamma_0}$.
\end{itemize}
\end{rmk}

\begin{rmk}\label{Rmk=CE}
Let $\Gamma_0 \subseteq \Gamma$ be a subgraph and consider the graph product C$^\ast$-algebras $\bA_{\Gamma_0}$ and $\bA_{\Gamma}$. By the universal property of Proposition \ref{Prop=UniversalPropertyReduced}, we may view $\bA_{\Gamma_0}\subset\bA_\Gamma$ canonically. Denote by $\mathcal{W}_{\text{min}}^0\subset\cW_{\text{min}}$ the subset of minimal reduced words in $\Gamma_0$ and let $\cH_0=\bc\Omega\oplus\bigoplus_{\boldw\in\cW_{\text{min}}^0}\cH_{\boldw} \subset \mathcal{H}$. Let $P$ be the orthogonal projection onto $\cH_0$. Then, it is easy to check that $\cE_{\Gamma_0}: x \mapsto P x P$ is a graph product state-preserving conditional expectation from $\bA_{\Gamma}$ onto $\bA_{\Gamma_0}$. In particular, $\cE_{\Gamma_0}$ is GNS-faithful since it preserves the graph product states which are GNS-faithful. Moreover, $\cE_{\Gamma_0}$ is the unique conditional expectation from $\bA_\Gamma$ to $\bA_{\Gamma_0}$ such that $\cE_{\Gamma_0}(a)=0$ for all reduced operators $a\in \bA_{\Gamma_0}$ with associated reduced word $\boldv=(v_1,\dots, v_n)$ satisfying the property that one of the $v_i$ is not in $\Gamma_0$. In particular, for all $v\in V\Gamma$, there exists a unique conditional expectation $\cE_v\,:\,\bA_\Gamma\rightarrow\bA_v$ such that $\cE_v(a)=0$ for all reduced operators $a\in \bA_\Gamma\setminus \bA_v$.
\end{rmk}

%%%%%%%%%%%%%%%%%%%%%%%%%%%%%%%%%%%%%%%%%%%
\subsubsection{Unscrewing technique}
%%%%%%%%%%%%%%%%%%%%%%%%%%%%%%%%%%%%%%%%%%%

Let $v \in \Gamma$, $\Gamma_1=\Star(v)$, $\Gamma_2=\Gamma\setminus\{v\}$ and set the following graph product C$^\ast$-algebras: $\bA_1 = \bA_{\Gamma_1}$, $\bB = \bA_{\Link(v)}$, and $\bA_2 = \bA_{\Gamma_2}$. By convention $\bA_\emptyset = \mathbb{C}$.  Recall that, by the universal property of Proposition \ref{Prop=UniversalPropertyReduced}, we may view $\bB\subset\bA_1\subset\bA_\Gamma$ and $\bB\subset\bA_2\subset\bA_\Gamma$ canonically. Moreover, by Remark \ref{Rmk=CE}, we have GNS-faithful conditional expectations $\cE_1:=\cE_{\Link(v)}|_{\bA_1}\,:\,\bA_1\rightarrow \bB$ and $\cE_2:=\cE_{\Link(v)}|_{\bA_2}\,:\,\bA_2\rightarrow \bB$. Let us denote by $\bA_1 \star_\bB \bA_2$ the reduced amalgamated free product with respect to these conditional expectations.

\begin{thm}\label{Thm=Amalgam}
There exists a unique $\ast$-isomorphism $\pi:  \bA_1 \star_\bB \bA_2 \rightarrow \bA_\Gamma $ such that $\pi|_{\bA_1}$ (resp. $\pi|_{\bA_2}$) is the canonical inclusion $\bA_1\subset \bA_\Gamma$ (resp. $\bA_2\subset \bA_\Gamma$). Moreover, $\pi$ is state-preserving.
\end{thm}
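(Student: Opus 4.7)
\emph{Strategy.} My plan is to identify $\bA_\Gamma$, together with the conditional expectation $\cE_{\Link(v)}\colon \bA_\Gamma\to\bB$ of Remark~\ref{Rmk=CE} and the canonical inclusions $\bA_1, \bA_2 \hookrightarrow \bA_\Gamma$, with the reduced amalgamated free product $\bA_1 \star_\bB \bA_2$ via the universal characterization of the latter. The structural prerequisites are immediate from what is already built: Proposition~\ref{Prop=UniversalPropertyReduced} gives the inclusions $\bA_1,\bA_2 \subset \bA_\Gamma$ sharing the common subalgebra $\bB$, these generate $\bA_\Gamma$ since $V\Gamma = V\Gamma_1 \cup V\Gamma_2$, and Remark~\ref{Rmk=CE} gives $\cE_{\Link(v)}|_{\bA_i} = \cE_i$ together with GNS-faithfulness of $\omega_\Gamma = \omega_\Gamma|_\bB \circ \cE_{\Link(v)}$. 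What remains is to verify the freeness-with-amalgamation condition, and then invoke the uniqueness of the reduced amalgamated free product (which, under GNS-faithful conditional expectations, supplies $\pi$ as a state-preserving isomorphism).

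\emph{The freeness condition.} I need to show that for every alternating sequence $a_1, a_2, \ldots, a_n$ with $a_k \in \ker \cE_{\epsilon(k)}$ and $\epsilon(k) \neq \epsilon(k+1)$, one has $\cE_{\Link(v)}(a_1\cdots a_n) = 0$. By density it suffices to treat the case when each $a_k$ is itself a graph-product-reduced operator in $\bA_{\epsilon(k)}$; the hypothesis $\cE_{\epsilon(k)}(a_k)=0$ then says that the associated word $\mathbf{w}_k$ contains a letter outside $\Link(v)$, which must be $v$ itself when $\epsilon(k)=1$ and must lie in $V\Gamma\setminus\Star(v)$ when $\epsilon(k)=2$. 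I shall call such letters \emph{bad of type $1$} and \emph{bad of type $2$} respectively. Expanding the product $a_1\cdots a_n$ into a linear combination of reduced operators of $\bA_\Gamma$ (via the commutations of Proposition~\ref{Prop=commutation} and the splitting $\bA_w \cdot \bA_w \subseteq \bc\cdot 1 \oplus \bA_w^\circ$ applied whenever two same-vertex letters become adjacent), I will show that every reduced operator appearing in the expansion has an associated word still containing a bad letter; by the characterization of $\cE_{\Link(v)}$ in Remark~\ref{Rmk=CE}, each such operator is annihilated, giving the claim. Surjectivity of the resulting $\pi$ then follows from generation of $\bA_\Gamma$ by $\bA_1\cup\bA_2$, and injectivity from state preservation together with GNS-faithfulness of $\omega_\Gamma$.

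\emph{Main obstacle.} The combinatorial claim above is the real work, and takes the form of a ping-pong argument. The two key facts I will exploit are: a bad letter of type $1$ and a bad letter of type $2$ never commute (since any vertex outside $\Star(v)$ is by definition not adjacent to $v$); and in $\bA_\Gamma$ a bad letter can only cancel with another bad letter of the same type, necessarily coming from a distinct $a_l$ with matching $\epsilon(l)$, which happens when the splitting $\bA_w\cdot \bA_w \subseteq \bc\cdot 1 \oplus \bA_w^\circ$ produces a scalar. Because the indices alternate, between any two same-type $a_k, a_l$ the intermediate $a_m$'s contribute opposite-type bad letters, which obstruct bringing the would-be cancelling pair into adjacency by commutations. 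The cleanest formalization I envision is an induction on $n$ tracking, for each reduced operator surviving in the expansion of $a_1\cdots a_n$, the ordered sequence of types of bad letters in its word; one checks that the type-II commutations leave this sequence invariant and that each multiplication step preserves the property of being a nonempty alternating concatenation of blocks of $1$'s and $2$'s. In particular this sequence is never empty, so a bad letter always remains, and the claim follows.
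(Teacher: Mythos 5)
Your proposal is correct and follows the paper's overall strategy: both arguments reduce the theorem to the freeness condition $\cE_{\Link(v)}(a_1\cdots a_n)=0$ for alternating products with $a_k\in\ker\cE_{\epsilon(k)}$, then reduce by density to the case where each $a_k$ is a reduced operator whose associated word contains a letter outside $\Link(v)$, and finally invoke the universal property of the reduced amalgamated free product (the GNS-faithfulness you mention is indeed available since $\omega_\Gamma$ is GNS-faithful and factors through $\cE_{\Link(v)}$). Where you genuinely differ is in how the vanishing of $\cE_{\Link(v)}$ on such products is verified. The paper argues by induction on the total number of letters: if the concatenated word is reduced, $\cE_{\Link(v)}$ kills it outright; if not, a cancelling pair can be pushed (by type II moves, using that each block is reduced) to the junction of two consecutive blocks, the cancelling vertex is then forced to lie in $\Gamma_1\cap\Gamma_2=\Link(v)$, and splitting the product of the two merging letters into its $\omega_w$-mean and mean-zero parts strictly shortens the word, so the induction applies. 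You instead track the ``bad'' letters ($v$ in type-$1$ blocks, vertices outside $\Star(v)$ in type-$2$ blocks) through the entire expansion and show they can never be annihilated; this rests on exactly the same two facts the paper exploits ($v$ commutes with nothing outside $\Star(v)$, and the two kinds of bad letters occupy disjoint vertex sets), but makes the mechanism more transparent at the cost of a heavier bookkeeping, whereas the paper's length induction is shorter to write down. One caution about your formalization: the invariant ``the type sequence of bad letters is a nonempty alternating concatenation of blocks'' is, as stated, too weak to close your induction on $n$ — if the terminal block of that sequence could have the same type as the incoming $a_{m+1}$, its bad letters could in principle cancel against those of $a_{m+1}$ and the sequence could empty out. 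You should propagate the stronger invariant that the bad-letter type sequence of every surviving reduced term is exactly $\epsilon(1)^{m_1}\cdots\epsilon(m)^{m_m}$ (equivalently, that no bad letter is ever cancelled): then the terminal block has type $\epsilon(m)\neq\epsilon(m+1)$ and shields the word, so no bad cancellation can occur at step $m+1$ either; this also removes the apparent circularity that the obstructing opposite-type letters might themselves have been removed at an earlier stage.
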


\begin{proof}
Observe that $\bA_\Gamma$ is generated by by $\bA_1$ and $\bA_2$. Let $\cE=\cE_{\Link(v)}$ be the canonical conditional expectation from $\bA_\Gamma$ onto $\bB$. Define, for $k=1,2$, $\bA_k^{\circ}=\text{ker}(\cE_k)$. By the universal property of amalgamated free products it suffices to show that for any $n\geq 2$, for any $a_1,\dots,a_n$ with $a_k\in\bA_{l_k}^{\circ}$ and $l_k\neq l_{k+1}$, one has $\cE(a_1\dots a_n)=0$. Since $\bA_k^{\circ}$ is the closed linear span of reduced operators $a\in \bA_k$ with associated reduced word $\boldv=(v_1,\ldots, v_n)$, $v_i\in\Gamma_k$ satisfying the property that one of the $v_i$ is not in $\Link(v)$ we may and will assume that each $a_k$ is a reduced operator $a_k=x_1^k\dots x_{r_k}^k\in \bA_{l_k}$ with associated reduced word $\boldv_k=(v_1^k,\dots v_{r_k}^k)$, $v_i^k\in\Gamma_{l_k}$ satisfying the property that one of the $v_i^k$ is not in $\Link(v)$. One has $a:=a_1\dots a_n=x_1^1\dots x_{r_1}^1x_1^2\dots x_{r_2}^2\dots x_{1}^n\dots x_{r_n}^n$ with $x_{r_i}^k\in \bA_{v_i^k}^{\circ}$. Let $\boldv=(v_1^1,\dots, v_{r_1}^1,v_1^2,\dots, v_{r_2}^2,\dots, v_{1}^n,\dots, v_{r_n}^n)$ be the associated, not necessarily reduced, word. Let $l=r_1+\dots +r_n\geq n$.

\vspace{0.2cm}

\noindent Let us show, by induction on $l$, that $\cE(a)=0$. If $l=n$ then $a_k\in \bA_{v_k}^{\circ}\subset \bA_{l_k}$ and $v_k\in\Gamma_{l_k}\setminus\Link(v)$ for all $k$. Then $\boldv$ is reduced and since $v_k\notin\Link(v)$ we have $\cE(a)=0$. Indeed, if $\boldv$ is not reduced, there exists $i<j$ such that $v_i=v_j=w$ and $v_k\in\Link(v)$ for all $i< k< j$. Since $v_k\notin\Link(v)$ for all $k$, it follows that $j=i+1$. Hence, $w\in(\Gamma_{l_i}\setminus\Link(v))\cap(\Gamma_{l_{i+1}}\setminus\Link(v))=\{v\}\cap(\Gamma\setminus\{v\})=\emptyset$, a contradiction.

\vspace{0.2cm}

\noindent Let $l\geq n$ and $a=a_1\dots a_{n}$  is of the form described previously. We use the notations introduced at the beginning of the proof. If the word $\boldv$ associated to $a$ is reduced then $\cE(a)=0$. Hence, we will assume that $\boldv$ is not reduced. Then there exists $i<j$ such that $v^i_{s_i}=w=v^j_{s_j}$  and $v^k_s\in\Link(w)$ whenever:
\begin{enumerate}
\item $i<k<j$ and $1\leq s\leq r_k$,
\item $k=i$ and $s_i<s\leq r_i=r_k$,
\item $k=j$ and $1\leq s<s_j$.
\end{enumerate}
Since we can replace $\boldv$ by a type ${\rm II}$ equivalent word and since any subword $\boldv_k$ is reduced, we may and will assume that $j=i+1$ and $w=v^i_{r_i}=v^{i+1}_1$. Hence we have $w\in\Gamma_{l_i}\cap\Gamma_{l_j}=\Gamma_{l_i}\cap\Gamma_{l_{i+1}}=\Gamma_1\cap\Gamma_2=\Star(v)\cap\Gamma\setminus\{v\}=\Link(v)$. Write, for $x\in \bA_w$, $\mathcal{P}_w(x)=x-\omega_w(x)$. One has
\begin{eqnarray*}
\cE(a_1\dots a_n)&=&\cE(a_1\dots a_{i-1}x_1^i\dots x_{r_{i-1}}^i\cP_w(x^i_{r_i}x^{i+1}_1)x_2^{i+1}\dots x_{r_{i+1}}^{i+1}\dots a_{i+2}\dots a_n)\\
&&+\omega_w(x^i_{r_i}x^{i+1}_1)\cE(a_1\dots a_{i-1}x_1^i\dots x_{r_{i-1}}^ix_2^{i+1}\dots x_{r_{i+1}}^{i+1}\dots a_{i+2}\dots a_n).
\end{eqnarray*}

\noindent The right hand side of this expression is zero by the induction hypothesis.

\noindent
\end{proof}

\begin{rmk} Theorem \ref{Thm=Amalgam} is trivially true when we consider the maximal graph product and the maximal amalgamated free product.
\end{rmk}

\begin{cor}
$\bA_\Gamma$ is exact if and only if $\bA_v$ is exact for all $v\in V\Gamma$.
\end{cor}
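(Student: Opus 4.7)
The forward direction is immediate: by Definition \ref{Dfn=GraphProduct} and the discussion following it, each $\bA_v$ embeds as a C$^\ast$-subalgebra of $\bA_\Gamma$ via $\lambda_v$, and exactness passes to C$^\ast$-subalgebras.

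For the backward direction, the plan is to induct on $n=|V\Gamma|$, the case $n=1$ being immediate. For $n\geq 2$ I would split into two cases. If $\Gamma$ is complete, then inside $\bA_\Gamma$ all the $\bA_v$ pairwise commute, and I would use Proposition \ref{Prop=UniversalPropertyReduced} applied to the minimal tensor product $\bigotimes_{v\in V\Gamma}\bA_v$ equipped with the product state to identify $\bA_\Gamma$ with this tensor product (the product state is GNS faithful and vanishes on reduced operators since, in a complete graph, every reduced word uses pairwise distinct vertices). Exactness is then immediate because minimal tensor products of exact C$^\ast$-algebras are exact.

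If $\Gamma$ is not complete, I would pick a vertex $v$ with $\Star(v)\subsetneq V\Gamma$ (such a $v$ exists: take any vertex not adjacent to some other vertex). Theorem \ref{Thm=Amalgam} then expresses
\[
\bA_\Gamma \;\cong\; \bA_{\Star(v)} \star_{\bA_{\Link(v)}} \bA_{\Gamma\setminus\{v\}}
\]
as a reduced amalgamated free product with respect to the canonical conditional expectations from Remark \ref{Rmk=CE}, which are GNS faithful because the graph product states are. Each of $\bA_{\Star(v)}$, $\bA_{\Gamma\setminus\{v\}}$, and $\bA_{\Link(v)}$ is a graph product over a graph with strictly fewer than $n$ vertices, so the inductive hypothesis applies and all three are exact. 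To conclude I would invoke Dykema's theorem (\emph{Exactness of reduced amalgamated free product C$^\ast$-algebras}, Forum Math. 16 (2004)) that a reduced amalgamated free product of exact C$^\ast$-algebras over an exact subalgebra, taken with respect to GNS-faithful conditional expectations, is exact.

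The main obstacle and the deepest input is precisely this appeal to Dykema's theorem on amalgamated free products; everything else is inductive bookkeeping together with the observation that a complete graph collapses the graph product to a minimal tensor product. Verifying the GNS-faithfulness of the canonical conditional expectations is routine, since they arise as compressions by orthogonal projections and the graph product states are GNS faithful.
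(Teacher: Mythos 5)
Your argument for finite graphs is correct and is essentially the route the paper takes: unscrew via Theorem \ref{Thm=Amalgam} at a vertex $v$ that has a non-neighbour, apply Dykema's theorem \cite{Dyk04} to the reduced amalgamated free product $\bA_{\Star(v)} \star_{\bA_{\Link(v)}} \bA_{\Gamma\setminus\{v\}}$, and induct on the number of vertices. In fact you make explicit a point the paper leaves implicit: when $\Gamma$ is complete the unscrewing gives no reduction (then $\Star(v)=\Gamma$), and your identification of $\bA_\Gamma$ with the minimal tensor product $\bigotimes_{v\in V\Gamma}\bA_v$ via Proposition \ref{Prop=UniversalPropertyReduced} (product state GNS faithful, reduced words over a complete graph having pairwise distinct letters) is a clean way to settle that base case. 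The forward direction via exactness passing to C$^\ast$-subalgebras is also fine.

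There is, however, one genuine omission: the corollary does not assume $\Gamma$ finite, and your induction on $n=\vert V\Gamma\vert$ only makes sense when it is. The paper spends the larger part of its proof precisely on this reduction: writing $\mathcal{F}(\Gamma)$ for the finite subgraphs ordered by inclusion, one has canonical inclusions $\bA_{\mathcal{G}_1}\subset\bA_{\mathcal{G}_2}\subset\bA_\Gamma$ for $\mathcal{G}_1\subset\mathcal{G}_2$, and $\bA_\Gamma=\overline{\bigcup_{\mathcal{G}\in\mathcal{F}(\Gamma)}\bA_{\mathcal{G}}}$, since every reduced operator involves only finitely many vertices and hence lies in some $\bA_{\mathcal{G}}$. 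Exactness is preserved under such inductive limits (increasing unions), so the general case follows from the finite case. Without this step your proof only establishes the corollary for finite $\Gamma$; with it added, your argument is complete and matches the paper's.
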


\begin{proof}
By an inductive limit argument we may suppose the graph $\Gamma$ is finite.  We explain now this inductive limit argument which will be used several time in this paper (even in the von Neumann algebra context). Let $\mathcal{F}(\Gamma)$ the set of finite subgraphs of $\Gamma$ ordered by the inclusion. If $\mathcal{G}_1,\mathcal{G}_2\in\mathcal{F}(\Gamma)$ and $\mathcal{G}_1\subset\mathcal{G}_2$, we view $\bA_{\mathcal{G}_1}\subset\bA_{\mathcal{G}_2}\subset\bA_\Gamma$. Hence, we get an inductive system of unital C*-algebras $(\bA_{\mathcal{G}})_{\mathcal{G}\in\mathcal{F}(\Gamma)}$. Let
$\bA_\infty=\overline{\bigcup_{\mathcal{G}\in\mathcal{F}(\Gamma)}\bA_{\mathcal{G}}}\subset \bA_\Gamma$
be the inductive limit. We claim that actually $\bA_\infty=\bA_\Gamma$. Indeed, it is enough to show that every reduced operator $a=a_1\dots a_n\in \bA_\Gamma$, with associated word $\bold{v}=(v_1,\dots,v_n)$ lies in $\bA_\infty$. In fact, such an operator $a$ lies in $\bA_{\mathcal{G}}$, where $\mathcal{G}$ is a finite subgrah of $\Gamma$ containing the vertices $v_1,\dots v_n$.

\vspace{0.2cm}

\noindent So we may assume that $\Gamma$ is finite. Theorem \ref{Thm=Amalgam}   and the results of \cite{Dyk04} may then be used to reduce the corollary (by induction on the vertices) to the situation of a clique, i.e. a graph in which every two vertices share an edge. In the latter case the graph product is the minimal tensor product of C$^\ast$-algebras, which preserves exactness.
\end{proof}

\begin{rmk}
If $\Gamma$ has $n$ connected component $\Gamma_1,\dots,\Gamma_n$ then $(\bA_\Gamma,\omega_\Gamma)\simeq(\bA_{\Gamma_1}*\dots *\bA_{\Gamma_n},\omega_{\Gamma_1}*\dots *\omega_{\Gamma_n})$.
\end{rmk}

%%%%%%%%%%%%%%%%%%%%%%%%%%%%%%%%%%%%%%%%%%%
\subsection{The graph product of von Neumann algebras}
%%%%%%%%%%%%%%%%%%%%%%%%%%%%%%%%%%%%%%%%%%%

Suppose that, for each $v \in V\Gamma$, we have a von Neumann algebra $\bM_v$ with a \textit{faithful} normal state $\omega_v$. We may and will assume that $\bM_v\subset\cB(\cH_v)$, where $(\cH_v,\id,\xi_v)$ is a GNS-construction for $\omega_v$. Let $(\cH,\Omega)$ by the graph product of the pointed Hilbert spaces $(\cH_v,\omega_v)$. Recall that $\cH$ comes with faithful unital normal $*$-homomorphisms $\lambda_v\,:\,\cB(\cH_v)\rightarrow\cB(\cH)$.

\begin{dfn} The \textit{graph product von Neumann algebra} is $\bM_\Gamma:=\left(\bigcup_{v\in V\Gamma}\lambda_v(\bM_v)\right)''\subset\cB(\cH)$.
\end{dfn}

\noindent As before, we will assume that $\bM_v\subset\bM_\Gamma$ and $\lambda_v|_{\bM_v}$ is the inclusion, for all $v\in V\Gamma$. We also have the same notion of reduced operators and the linear span of $1$ and the reduced operators is a weakly dense $*$-subalgebra of $\bM_\Gamma$. The graph product state $\omega_\Gamma(\: \cdot \:) =\langle\: \cdot\: \Omega,\Omega\rangle$ is now a normal state on $\bM_\Gamma$. The graph product state is characterized as follows: it is the unique normal state on $\bM_\Gamma$ satisfying $\omega_\Gamma(a)=0$ for all reduced operators $a\in \bA_{\Gamma}$. In particular, $\omega_\Gamma|_{\bM_v}=\omega_v$ for all $v\in V\Gamma$.

\vspace{0.2cm}

\noindent Let us construct the right version of $\bM_\Gamma$. For $v\in V\Gamma$, we denote by $r_v(a)$
the right action of $\bM_v$ on $\cH_v$ i.e. $r_v(a)=J_va^*J_v$ where $J_v$ is the modular conjugation of $\omega_v$. View $r_v$ a faithful normal unital $*$-homomorphism from $\bM_v^{\text{op}}$ to $\cB(\cH_v)$. Denote by $\bM_\Gamma^r$ the von Neumann subalgebra of $\cB(\cH)$ generated by $\bigcup_{v\in V\Gamma}\rho_v\circ r_v(\bM_v)$. Write $\rho_v^\Gamma=\rho_v\circ r_v$ and note that $\rho_v^\Gamma$ is a faithful unital normal $*$-homomorphism from $\bM_v^{\text{op}}$ to $\bM^r_\Gamma$.

\vspace{0.2cm}

\noindent Observe that, by Proposition \ref{Prop=commutant}, $\bM_\Gamma^r\subset\bM_\Gamma'$.

\vspace{0.2cm}

\noindent As before, we call an operator $a=\rho^\Gamma_{v_1}(a_1)\dots\rho^\Gamma_{v_n}(a_n)\in \bM^r_\Gamma$ \textit{reduced} if $a_i\in \bM_{v_i}^{\circ}$ and the word $\boldv=(v_1,\dots,v_n)$ is reduced. It is clear from the definitions that, whenever $a=\rho^\Gamma_{v_1}(a_1)\dots\rho^\Gamma_{v_n}(a_n)\in \bM^r_\Gamma$ is a reduced operator (with associated word in $\mathcal{W}_{\text{min}}$) one has $a\Omega=\widehat{a}_n\otimes\dots\otimes\widehat{a}_1$. Hence, the vector $\Omega$ is cyclic for $\bM^r_\Gamma$ so it is separating for $\bM_\Gamma$ and the graph product state $\omega_\Gamma$ is faithful with GNS-construction $(\mathcal{H},\id,\Omega)$. It is now easy to compute the modular theory of $\omega_\Gamma$. We denote by $\nabla_v$, $J_v$ and $(\sigma^v_t)_{t\in\mathbb{R}}$ the ingredients of the modular theory of $\omega_v$, for $v\in V\Gamma$. For $\boldw \in\mathcal{W}$ a reduced word of the form $\boldw=(v_1,\dots,v_n)$, let $\overline{\boldw}$ be the unique minimal reduced word equivalent to the reduced word $\boldw^*=(v_n,\dots,v_1)$ and $\sigma_\boldw$ the unique bijection of $\{1,\dots,n\}$ such that $\overline{\boldw}=(v_{\sigma_\boldw(n)},\dots,v_{\sigma_\boldw(1)})$. Define the unitary operator $\Sigma_\boldw\,:\,\mathcal{H}_\boldw\rightarrow\mathcal{H}_{\overline{\boldw}}$ by $\Sigma_\boldw(\xi_1\ot\dots\ot\xi_n)=\cQ_{\boldw^*,\bar{\boldw}}(\xi_n\ot\dots\ot\xi_1)=\xi_{\sigma_\boldw(n)}\ot\dots\ot\xi_{\sigma_\boldw(1)}$. Finally, denote by $J_{\mathbb{C}}$ the conjugation map on $\mathbb{C}$.

\begin{prop}
Let $J$, $\nabla$ and $(\sigma_t)_{t\in\mathbb{R}}$ be the ingredients of the modular theory of $\omega_\Gamma$. One has
\begin{enumerate}
\item $J=J_{\mathbb{C}}\oplus\bigoplus_{\boldw =(v_1,\dots,v_n)\in\cW_{\text{min}}} (J_{v_{\sigma_\boldw(n)}}\ot\dots\ot J_{v_{\sigma_\boldw(n)}})\Sigma_\boldw$
\item $\nabla=\id_{\mathbb{C}\Omega}\oplus\bigoplus_{\boldw=(v_1,\dots,v_n)\in\cW_{\text{min}}} \Sigma_\boldw^*(\nabla_{v_{\sigma_\boldw(n)}}\ot\dots\ot \nabla_{v_{\sigma_\boldw(n)}})\Sigma_\boldw$
\item For any reduced operator $a=a_1\dots a_n\in\bM_\Gamma$ with associated word $\boldv=(v_1,\dots ,v_n)$ one has
$$\sigma_t(a_1\dots a_n)=\sigma_t^{v_1}(a_1)\dots\sigma_t^{v_n}(a_n)\quad\text{for all}\quad t\in\mathbb{R}.$$
\end{enumerate}
\end{prop}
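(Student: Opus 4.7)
The plan is to exploit the polar decomposition of the Tomita operator. Since $\Omega$ is cyclic and separating for $\bM_\Gamma$ (as noted just above the statement), the closable conjugate-linear operator $S : a\Omega \mapsto a^{\ast}\Omega$ is well defined on the dense subspace $\mathcal{D}$ linearly spanned by $\Omega$ together with $a\Omega$ for $a$ a reduced operator whose associated word is in $\mathcal{W}_{\text{min}}$. If one can show that $\mathcal{D}$ is a core for the closure $\overline{S}$, that the proposed $J$ is an antilinear isometry, that the proposed $\nabla$ is positive selfadjoint, and that $J\nabla^{1/2}$ agrees with $S$ on $\mathcal{D}$, then uniqueness of the polar decomposition $\overline{S}=J\nabla^{1/2}$ gives assertions (1) and (2). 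The fact that $J$ and $\nabla$ are block-diagonal (after the unitary reshuffling by $\Sigma_\boldw$) with known modular data on each block makes the isometry/selfadjointness checks routine, so the substance is to verify $S = J\nabla^{1/2}$ on $\mathcal{D}$.

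The key computation is the action of $S$ on a reduced operator. Fix a reduced $a = a_1\cdots a_n$ with minimal associated word $\boldw = (v_1,\dots,v_n)$, so that $a\Omega = \widehat{a}_1\otimes\cdots\otimes\widehat{a}_n \in \cH_{\boldw}$. Taking adjoints gives $a^{\ast}=a_n^{\ast}\cdots a_1^{\ast}$ with associated word $\boldw^{\ast}=(v_n,\dots,v_1)$, which is reduced but may not be minimal. By Lemma \ref{Lem=ReducedRep}, $\boldw^{\ast}$ and $\bar{\boldw}$ differ only by type~II moves, and the algebraic commutation relations among the $\bM_v$'s (already used in Remark analogous to Remark \ref{Rmk=MaxDenseSubalgebra}) allow us to rewrite
\[
a^{\ast}=a^{\ast}_{\sigma_\boldw(n)}\,a^{\ast}_{\sigma_\boldw(n-1)}\cdots a^{\ast}_{\sigma_\boldw(1)},
\]
which is now a reduced operator with associated minimal word $\bar{\boldw}$. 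Consequently $a^{\ast}\Omega = \widehat{a^{\ast}_{\sigma_\boldw(n)}}\otimes\cdots\otimes\widehat{a^{\ast}_{\sigma_\boldw(1)}} \in \cH_{\bar{\boldw}}$.

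Next I would evaluate the proposed $J\nabla^{1/2}$ on $a\Omega$. The map $\Sigma_\boldw$ takes $\widehat{a}_1\otimes\cdots\otimes\widehat{a}_n$ to $\widehat{a}_{\sigma_\boldw(n)}\otimes\cdots\otimes\widehat{a}_{\sigma_\boldw(1)}\in\cH_{\bar{\boldw}}$, and the componentwise $\nabla_{v_{\sigma_\boldw(k)}}^{1/2}$ followed by $\Sigma_\boldw^{\ast}$ brings us back to $\cH_{\boldw}$; then $J$ reapplies $\Sigma_\boldw$ (so the two $\Sigma$'s cancel) and acts by the componentwise $J_{v_{\sigma_\boldw(k)}}$ on $\cH_{\bar{\boldw}}$. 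Using $J_v\nabla_v^{1/2}=S_v$ and $S_v\widehat{a_i}=\widehat{a_i^{\ast}}$ on each factor, the result is
\[
J\nabla^{1/2}(a\Omega)=\widehat{a^{\ast}_{\sigma_\boldw(n)}}\otimes\cdots\otimes\widehat{a^{\ast}_{\sigma_\boldw(1)}}=a^{\ast}\Omega,
\]
as required. This proves (1) and (2).

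For (3), apply $\sigma_t(b)\Omega = \nabla^{it}b\nabla^{-it}\Omega = \nabla^{it}(b\Omega)$, using $\nabla\Omega=\Omega$. Substituting $b=a$ and using the same unscrambling as above with $\nabla_{v_i}^{it}$ in place of $\nabla_{v_i}^{1/2}$, the formula yields $\Sigma_\boldw^{\ast}\left(\nabla^{it}_{v_{\sigma_\boldw(n)}}\widehat{a}_{\sigma_\boldw(n)}\otimes\cdots\otimes\nabla^{it}_{v_{\sigma_\boldw(1)}}\widehat{a}_{\sigma_\boldw(1)}\right)=\widehat{\sigma_t^{v_1}(a_1)}\otimes\cdots\otimes\widehat{\sigma_t^{v_n}(a_n)}$. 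Because each $\sigma_t^{v_i}$ preserves $\omega_{v_i}$, the operators $\sigma_t^{v_i}(a_i)$ still lie in $\bM_{v_i}^{\circ}$, so the product $\sigma_t^{v_1}(a_1)\cdots\sigma_t^{v_n}(a_n)$ is a reduced operator with associated word $\boldw$, whose value on $\Omega$ is precisely the right-hand side. Since $\Omega$ is separating, this forces $\sigma_t(a)=\sigma_t^{v_1}(a_1)\cdots\sigma_t^{v_n}(a_n)$.

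The main obstacle is the algebraic rearrangement $a^{\ast}=a^{\ast}_{\sigma_\boldw(n)}\cdots a^{\ast}_{\sigma_\boldw(1)}$: one must ensure that the permutation $\sigma_\boldw$ defined at the Hilbert-space level via $\Sigma_\boldw$ matches exactly the permutation implemented by the type~II commutation relations on the algebraic word $\boldw^{\ast}$. This is essentially the content of Lemma \ref{Lem=ReducedRep}\eqref{Item=ReducedRepIV}, but has to be invoked in the reversed-word setting with care; once this bookkeeping is in hand, the rest is a direct verification on the spanning set of reduced vectors.
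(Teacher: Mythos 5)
Your strategy is the same as the paper's: compute the Tomita operator on vectors $a\Omega$ with $a$ reduced, rewrite $a^{\ast}$ as a reduced operator with minimal associated word $\overline{\boldw}$ via the type~II commutation relations (your bookkeeping with $\sigma_\boldw$ and $\Sigma_\boldw$ matches the paper's ``easy computation''), and deduce (3) from (2) by applying $\nabla^{it}$ to $a\Omega$ and using that $\Omega$ is separating; that part is fine. The gap is in the step that passes from the computation on $\mathcal{D}=\mathcal{M}_\Gamma\Omega$ (span of $\Omega$ and the reduced vectors) to the identification of $J$ and $\nabla$. First, the claim that $\mathcal{D}$ is a core for the Tomita operator $S$ of $\omega_\Gamma$ is stated but never proved, and it is not automatic: $\mathcal{M}_\Gamma$ is only $\sigma$-weakly dense in $\bM_\Gamma$, so one needs Kaplansky's density theorem (as the paper invokes) to pass from the core $\bM_\Gamma\Omega$ to $\mathcal{M}_\Gamma\Omega$. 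Second, and more seriously, even granting that $\mathcal{D}$ is a core for $S$, the fact that the closed operator $T:=J\nabla^{1/2}$ (with your proposed $J$ and $\nabla$) agrees with $S$ on $\mathcal{D}$ only yields the inclusion $S\subseteq T$; uniqueness of the polar decomposition is a statement about a single closed operator and cannot be invoked until one knows $S=T$, and an inclusion between two distinct closed antilinear operators does not force equality. So as written the argument stops short of (1) and (2).

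The missing ingredient is precisely the paper's ``common core'' step: one must also verify that $\mathcal{D}$ is a core for $T$ (equivalently for the proposed $\nabla^{1/2}$). This is painless here, because $T$ is defined blockwise as $(S_{v_{\sigma_\boldw(n)}}\otimes\dots\otimes S_{v_{\sigma_\boldw(1)}})\circ\Sigma_\boldw$, whose core is by construction $\mathbb{C}\Omega$ plus the algebraic direct sum of the spaces $\bM_{v_1}^{\circ}\xi_{v_1}\otimes\dots\otimes\bM_{v_n}^{\circ}\xi_{v_n}$, i.e. exactly $\mathcal{M}_\Gamma\Omega$. Once $\mathcal{D}$ is known to be a core for both $S$ and $T$, agreement on $\mathcal{D}$ gives $S=T$, and only then does uniqueness of the polar decomposition deliver (1) and (2). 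With this repair your proof coincides with the paper's; the computation $S(a\Omega)=T(a\Omega)$ and the deduction of (3) require no further change.
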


\begin{proof}
$(3)$ follows easily from $(2)$. Let $S_v$ (resp. $S$) be the modular operator for $\omega_v$ (resp. $\omega_\Gamma$). To get $(1)$ and $(2)$, it suffices to prove, by uniqueness of the polar decomposition, that
$$S=\id_{\mathbb{C}\Omega}\oplus\bigoplus_{\boldw=(v_1,\dots,v_n)\in\cW_{\text{min}}} (S_{v_{\sigma_\boldw(n)}}\ot\dots\ot S_{v_{\sigma_\boldw(n)}})\circ\Sigma_\boldw.$$
Denote by $T$ the right hand side of the preceding equation. An easy computation gives that, for all reduced operators $a=a_1\dots a_n\in \bA_\Gamma$ or for $a\in\mathbb{C}1$, one has $S(a\Omega)=T(a\Omega)$. Hence, $S|_{\mathcal{M}_\Gamma\Omega}=T|_{\mathcal{M}_\Gamma\Omega}$, where $\mathcal{M}_\Gamma\subset\bM_\Gamma$ is the linear span of $1$ and the reduced operators and it suffices to show that $\mathcal{M}_\Gamma\Omega$ is a common core for $S$ and $T$. By definition, $\bM_{\Gamma}\Omega$ is a core for $S$. Since $\mathcal{M}_\Gamma$ is a weakly dense unital $*$-subalgebra of $\bM_\Gamma$, it follows from the Kaplansky's density Theorem that $\mathcal{M}_\Gamma\Omega$ is also a core for $S$: indeed, let $x \in \bM_\Gamma$. By Kaplansky's density theorem $x$ is in the strong closure of the convex set of elements in $\mathcal{M}_\Gamma$ that have norm at most $\Vert x \Vert$. For convex sets the strong and strong-$\ast$ closure coincide. Therefore, we may find a net $(x_i)_i$  in $\mathcal{M}_\Gamma$ converging to $x$ in the strong-$\ast$ topology. It follows that $x_i \Omega$ is bounded and converges weakly to $x \Omega$ and similarly $x_i^\ast \Omega$ is bounded and converges weakly to $x^\ast \Omega$ in the GNS-Hilbert space. This concludes that $\mathcal{M}_\Gamma$ is a core for $S$ with respect to the weak/weak-topology on the graph of $S$. Hence by a standard convexity argument it is also a norm/norm core for $S$.

By definition of $T$, a core for $T$ is given by the subspace
$$\mathbb{C}\Omega\oplus\bigoplus_{\boldw=(v_1,\dots,v_n)\in\cW_{\text{min}}}\bM_{v_1}^{\circ}\xi_{v_1}\otimes\dots\otimes \bM_{v_n}^{\circ}\xi_{v_n},$$
where the direct sums and tensor products are the algebraic ones. This subspace is exactly the linear span of $\Omega$ and vectors of the form $a\Omega$, where $a$ is a reduced operator i.e. this is the space $\mathcal{M}_\Gamma\Omega$.
\end{proof}

\begin{rmk} It follows from the preceding proposition that, for all reduced operators $a=a_1\dots a_n\in\bM_\Gamma$, with $a_i\in\bM_{v_i}$, one has $JaJ=\rho_{v_1}^\Gamma(a_1)\dots\rho_{v_n}^\Gamma(a_n)$. Hence we actually have $\bM_\Gamma'=\bM_\Gamma^r$.
\end{rmk}

\noindent The graph product von Neumann algebra also satisfies a universal property. The following Proposition \ref{Prop=UniversalPropertyvN} can be proved exactly as Proposition \ref{Prop=commutation} since the isomorphism appearing in the proof of Proposition \ref{Prop=UniversalPropertyReduced} is spacial.

\begin{prop}\label{Prop=UniversalPropertyvN}
Let $\bN$ be a von Neumann algebra with a GNS-faithful normal state $\omega$ and suppose that, for all $v\in V\Gamma$, there exists a unital normal faithful $*$-homomorphism $\pi_v\,:\,\bM_v\rightarrow \bN$ such that:
\begin{itemize}
\item $\bN$ is generated, as a von Neumann algebra, by $\cup_{v\in V\Gamma}\pi_v(\bM_v)$ and the images of $\pi_v$ and $\pi_{v'}$ commute whenever $(v,v')\in E\Gamma$.
\item For any operator $a=\pi_{v_1}(a_1)\dots\pi_{v_n}(a_n)\in \bN$, where $\boldv=(v_1,\dots v_n)$ is a reduced word and $a_i\in \bM_{v_i}^{\circ}$ one has $\omega(a)=0$
\end{itemize}
Then, there exists a unique normal $*$-isomorphism $\pi\,:\,\bM_\Gamma\rightarrow \bN$ such that $\pi|_{\bM_v}=\pi_v$. Moreover, $\pi$ intertwines the graph product state and $\omega$. In particular, $\omega$ is faithful.
\end{prop}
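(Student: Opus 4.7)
The plan is to follow the spatial construction used for the C$^\ast$-algebraic version (Proposition \ref{Prop=UniversalPropertyReduced}) and then exploit the automatic normality of conjugation by a unitary. Since $\omega$ is a GNS-faithful normal state on $\bN$, I may assume $\bN \subset \cB(\cK)$ with GNS cyclic vector $\eta$. I would define $V : \cH \to \cK$ on the linear span of $\Omega$ and the reduced vectors by $V\Omega = \eta$ and
\[
V(a_1\cdots a_n\Omega) = \pi_{v_1}(a_1)\cdots\pi_{v_n}(a_n)\eta
\]
whenever $a_1\cdots a_n$ is a reduced operator in $\bM_\Gamma$ with associated reduced word $(v_1,\dots,v_n)$.

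The first step is to verify that $V$ is well-defined and isometric, i.e.\ that the Gram matrix of the images matches the Gram matrix of the reduced vectors. The latter is computed by $\omega_\Gamma$ applied to an adjoint product, the former by $\omega$ applied to the corresponding product in $\bN$. Both states vanish on reduced operators of length $\geq 1$ and both satisfy the same commutation relations across edges of $\Gamma$, so the inductive reduction --- permute letters across edges using type II equivalence, center a pair of neighbouring letters from the same vertex algebra, extract the scalar $\omega_v(\cdot)$, and recurse on the strictly shorter word --- produces the same scalar on both sides. Density of the range of $V$ then follows from Kaplansky density: the $\ast$-algebra generated by $\bigcup_v \pi_v(\bM_v)$ is $\sigma$-weakly dense in $\bN$, so its action on $\eta$ is norm-dense in $\cK$; and every element of this $\ast$-algebra can be rewritten, using $\bM_v = \bC 1 \oplus \bM_v^{\circ}$ together with the commutation relations, as a linear combination of $\eta$ and reduced vectors. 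Hence $V$ extends to a unitary $\cH \to \cK$.

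The candidate morphism is then $\pi(x) = V x V^\ast$. Because conjugation by a unitary is automatically a normal $\ast$-isomorphism of $\cB(\cH)$ onto $\cB(\cK)$, no additional argument is needed to pass from $\ast$-algebraic agreement to von Neumann algebraic agreement, which is the simplification flagged in the remark preceding the statement. I would check $\pi|_{\bM_v} = \pi_v$ by comparing the actions of both sides on the dense span, which follows immediately from the definition of $V$ after writing $a \in \bM_v$ as $\omega_v(a)\,1 + a^\circ$. The generation hypothesis then forces $\pi(\bM_\Gamma) = \bN$; state-preservation is the identity $\omega\circ\pi = \langle \pi(\cdot)\eta,\eta\rangle = \langle \cdot\,\Omega,\Omega\rangle = \omega_\Gamma$ since $V^\ast\eta = \Omega$; and faithfulness of $\omega$ follows from $\omega = \omega_\Gamma \circ \pi^{-1}$ and faithfulness of $\omega_\Gamma$. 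Uniqueness is immediate from the fact that $\bigcup_v \bM_v$ generates $\bM_\Gamma$ as a von Neumann algebra.

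The main obstacle is the isometry check: writing the reduction algorithm out so that it transparently matches on both sides. This essentially re-runs the word-by-word induction underlying the proofs of Propositions \ref{Prop=commutation} and \ref{Prop=commutant}, and the bookkeeping is the only genuinely delicate part of the argument --- everything else is formal once the spatial isomorphism has been produced.
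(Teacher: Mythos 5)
Your proposal is correct and follows essentially the same route as the paper: the authors simply observe that the isomorphism in the proof of Proposition \ref{Prop=UniversalPropertyReduced} is spatial, i.e.\ implemented by the same unitary $V$ you construct, so that $\pi(\,\cdot\,)=V\,\cdot\,V^\ast$ is automatically normal and the remaining verifications (isometry via the word-reduction induction, density via Kaplansky, state preservation, uniqueness) are exactly the ones you list.
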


\begin{rmk}\label{Rmk=IsoVn}
The preceding proposition implies the following.
\begin{itemize}
\item If $\bM_v = {\rm L}(G_v)$ is the group von Neumann algebra of a discrete group $G_v, v \in V\Gamma$ then $(\bM_\Gamma,\omega_\Gamma) = ({\rm L}(G_\Gamma),\tau)$, where $\tau$ is the canonical trace.
\item Let $\Gamma$ be a graph for which any two vertices are connected by one edge. Then $(\bM_\Gamma,\omega_\Gamma)= \underset{v\in V\Gamma}{\ot}(\bM_{v},\omega_{v})$.
\item Let $\Gamma$ be a graph with no edges. Then $(\bM_{\Gamma},\omega_\Gamma)=\underset{v\in V\Gamma}{*}(\bM_v,\omega_v)$.
\item If $\Gamma_0\subset\Gamma$ is a subgraph and, for all $v\in\Gamma_0$, $\bN_v\subset \bM_v$ is a unital von Neumann subalgebra then the graph product von Neumann algebra $\bN_{\Gamma_0}$ obtained from the $\bN_v$, $v\in\Gamma_0$, is canonically isomorphic to $\left(\bigcup_{v\in V\Gamma_0} \bN_v\right)''$. In the sequel we will always do this identification without further explanations.
\item There is a unique (state preserving) $*$-isomorphism $\bM_{\Star(v)}\simeq \bM_v\ot \bM_{\Link(v)}$ identifying $x\ot y$ with $xy$, for all $x\in \bM_v$ and all $y\in \bM_{\Link(v)}$. In particular, $\bM_v'\cap \bM_{\Star(v)}=\bN_{\Star(v)}$, where
$$ \bN_w=\left\{\begin{array}{lcl}
\bM_w&\text{if}&w\in\Link(v),\\
Z(\bM_v)&\text{if}&w=v.
\end{array}\right.$$
\end{itemize}
\end{rmk}

\begin{rmk}\label{Rmk=CEVN}
Let $\Gamma_0 \subseteq \Gamma$ be a subgraph and consider the graph product von Neumann algebras $\bM_{\Gamma_0}$ and $\bM_{\Gamma}$. As in a C*-algebraic case, there exists a unique normal conditional expectation $\cE_{\Gamma_0}$ from $\bM_\Gamma$ to $\bM_{\Gamma_0}$ preserving the graph product states and such that $\cE_{\Gamma_0}(a)=0$ for all reduced operator $a\in \bM_{\Gamma_0}$ with associated reduced word $\boldv=(v_1,\dots v_n)$ satisfying the property that one of the $v_i$ is not in $\Gamma_0$. In particular, for all $v\in V\Gamma$, there exists a unique state preserving normal conditional expectation $\cE_v\,:\,\bM_\Gamma\rightarrow\bM_v$ such that $\cE_v(a)=0$ for all reduced operator $a\in \bM_\Gamma\setminus \bM_v$.
\end{rmk}

\begin{prop}\label{Prop=intersection}
Let $\Gamma_0,\Gamma_1\subset\Gamma$ be subgraphs. One has $\bM_{\Gamma_0}\cap\bM_{\Gamma_1}=\bM_{\Gamma_0\cap\Gamma_1}$.
\end{prop}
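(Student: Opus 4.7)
The inclusion $\bM_{\Gamma_0 \cap \Gamma_1} \subseteq \bM_{\Gamma_0} \cap \bM_{\Gamma_1}$ is immediate, since $\bM_{\Gamma_0 \cap \Gamma_1}$ is generated by those $\bM_v$ with $v\in V\Gamma_0\cap V\Gamma_1$, all of which sit inside both $\bM_{\Gamma_0}$ and $\bM_{\Gamma_1}$ by the subgraph identification of Remark \ref{Rmk=IsoVn}. The plan for the nontrivial inclusion is to exploit the canonical normal state-preserving conditional expectations $\cE_{\Gamma_0}$, $\cE_{\Gamma_1}$, $\cE_{\Gamma_0 \cap \Gamma_1}$ from $\bM_\Gamma$ onto the respective graph product subalgebras provided by Remark \ref{Rmk=CEVN}, together with the fact that $x \in \bM_{\Gamma_i}$ is characterized by $\cE_{\Gamma_i}(x)=x$.

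The key step is to show the factorization
\[
\cE_{\Gamma_0} \circ \cE_{\Gamma_1} = \cE_{\Gamma_0 \cap \Gamma_1}.
\]
Both sides are normal maps on $\bM_\Gamma$, so by Kaplansky-type density it suffices to check the equality on the weakly dense $\ast$-subalgebra spanned by $1$ and the reduced operators. Let $a = a_1 \cdots a_n$ be a reduced operator with associated reduced word $\boldv = (v_1, \ldots, v_n)$. By the defining property of these conditional expectations, $\cE_{\Gamma_1}(a)$ equals $a$ if all $v_i \in V\Gamma_1$ and $0$ otherwise; applying $\cE_{\Gamma_0}$ to the result and using the same characterization, $\cE_{\Gamma_0}\circ\cE_{\Gamma_1}(a)$ equals $a$ if all $v_i \in V\Gamma_0 \cap V\Gamma_1$ and vanishes otherwise, which is exactly $\cE_{\Gamma_0 \cap \Gamma_1}(a)$. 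The evaluation on $1$ is obvious, so the identity holds.

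From the factorization the result follows cleanly: if $x \in \bM_{\Gamma_0} \cap \bM_{\Gamma_1}$, then $\cE_{\Gamma_1}(x) = x$ and $\cE_{\Gamma_0}(x) = x$, whence
\[
x = \cE_{\Gamma_0}(x) = \cE_{\Gamma_0}\bigl(\cE_{\Gamma_1}(x)\bigr) = \cE_{\Gamma_0 \cap \Gamma_1}(x) \in \bM_{\Gamma_0 \cap \Gamma_1}.
\]
The only point that requires a small amount of care is the identification of $\cE_{\Gamma_0}\circ\cE_{\Gamma_1}$ with $\cE_{\Gamma_0\cap\Gamma_1}$; this is not an abstract fact about nested conditional expectations (which is why the argument is not entirely formal), but is forced by the explicit characterization in terms of reduced operators and the trivial observation that a reduced word is supported in $\Gamma_0\cap\Gamma_1$ iff it is supported in both $\Gamma_0$ and $\Gamma_1$. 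I would expect no serious obstacle beyond carefully invoking the uniqueness part of Remark \ref{Rmk=CEVN} and the fact that the linear span of $1$ together with reduced operators is weak$^\ast$-dense in $\bM_\Gamma$.
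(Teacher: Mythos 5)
Your proof is correct, and at the decisive step it takes a genuinely different route from the paper. The paper also works with the expectations of Remark \ref{Rmk=CEVN} and the weak density of the span $\mathcal{M}_{\Gamma_0}$ of $1$ and the reduced operators, but it argues element-wise inside $\bM_{\Gamma_0}$: for $x\in\mathcal{M}_{\Gamma_0}\cap\bM_{\Gamma_1}$ it writes $x=\omega_\Gamma(x)1+\sum_i x_i$, uses $x=\cE_{\Gamma_1}(x)$ and the fact that each $\cE_{\Gamma_1}(x_i)$ is either $0$ or a reduced operator with letters in $\Gamma_0\cap\Gamma_1$, and then upgrades from $\mathcal{M}_{\Gamma_0}\cap\bM_{\Gamma_1}$ to $\bM_{\Gamma_0}\cap\bM_{\Gamma_1}$ by a bicommutant computation identifying $\left(\mathcal{M}_{\Gamma_0}\cap\bM_{\Gamma_1}\right)''$ with $\bM_{\Gamma_0}\cap\bM_{\Gamma_1}$. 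You instead prove the map identity $\cE_{\Gamma_0}\circ\cE_{\Gamma_1}=\cE_{\Gamma_0\cap\Gamma_1}$ on the dense $*$-subalgebra spanned by $1$ and the reduced operators of $\bM_\Gamma$ and extend by normality (since normal positive maps are $\sigma$-weakly continuous and that subalgebra is $\sigma$-weakly dense, plain continuity suffices; the appeal to Kaplansky density is harmless but not needed), then evaluate at an arbitrary $x$ in the intersection. What your route buys: you never have to approximate an element of $\bM_{\Gamma_0}\cap\bM_{\Gamma_1}$ by elements of a dense subalgebra that already lie in the intersection — exactly the delicate point that the paper's commutant manipulation is meant to absorb — and you get as a by-product the commuting-square relation $\cE_{\Gamma_0}\circ\cE_{\Gamma_1}=\cE_{\Gamma_0\cap\Gamma_1}$, which is of independent interest. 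The paper's argument, in exchange, uses only the single expectation $\cE_{\Gamma_1}$ and stays inside $\bM_{\Gamma_0}$, so it is slightly more economical in its ingredients. Both verifications on reduced operators rest on the same observation, namely that a reduced word has all its letters in $\Gamma_0\cap\Gamma_1$ if and only if it has them in both $\Gamma_0$ and $\Gamma_1$.
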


\begin{proof}
The inclusion $\bM_{\Gamma_0\cap\Gamma_1}\subset\bM_{\Gamma_0}\cap\bM_{\Gamma_1}$ being obvious, let us show the other one. Let $\mathcal{M}_{\Gamma_0}$ be the linear span of $1$ and the reduced operator in $\bM_{\Gamma_0}$. It suffices to show that $\mathcal{M}_{\Gamma_0}\cap \bM_{\Gamma_1}\subset \bM_{\Gamma_0\cap\Gamma_1}$. Indeed, if it is the case, then $ \bM_{\Gamma_0\cap\Gamma_1}$ contains $\left(\mathcal{M}_{\Gamma_0}\cap \bM_{\Gamma_1}\right)''=\left(\mathcal{M}_{\Gamma_0}'\cup \bM_{\Gamma_1}'\right)'=\mathcal{M}_{\Gamma_0}''\cap \bM_{\Gamma_1}=\bM_{\Gamma_0}\cap\bM_{\Gamma_1}$. Let $x\in \mathcal{M}_{\Gamma_0}$ and write $x=\omega_{\Gamma}(x)1+\sum_{i}x_i$, where the sum is finite and the $x_i$ are some reduced operators in $\bM_{\Gamma_0}$. If $x\in\bM_{\Gamma_1}$ we have $x=\cE_{\Gamma_1}(x)=\omega_{\Gamma}(x)1+\sum_{i}\cE_{\Gamma_1}(x_i)$. By definition of the conditional expectation, for all $i$, $\cE_{\Gamma_1}(x_i)$ is either $0$ or a reduced operator with associated vertices in $\Gamma_0\cap\Gamma_1$. Hence, $x\in\bM_{\Gamma_0\cap\Gamma_1}$.
\end{proof}

\noindent Let $v \in \Gamma$, $\Gamma_1=\Star(v)$, $\Gamma_2=\Gamma\setminus\{v\}$ and set the following graph product von Neumann algebras: $\bM_1 = \bM_{\Gamma_1}$, $\bN = \bM_{\Link(v)}$, and $\bM_2 = \bM_{\Gamma_2}$. By the universal property of Proposition \ref{Prop=UniversalPropertyvN}, we may view $\bN\subset\bM_1\subset\bM_\Gamma$ and $\bN\subset\bM_2\subset\bM_\Gamma$ canonically. Let us denote by $\bM_1 \star_\bN \bM_2$ the von Neumann algebraic amalgamated free product with respect to the graph product states. The following result can be proved exactly as Theorem \ref{Thm=Amalgam}, using the universal property of von Neumann algebraic amalgamated free products.

\begin{thm}\label{Thm=AmalgamvN}
There exists a unique $\ast$-isomorphism $\pi:  \bM_1 \star_\bN \bM_2 \rightarrow \bM_\Gamma $ such that $\pi|_{\bM_1}$ (resp. $\pi|_{\bM_2}$) is the canonical inclusion $\bM_1\subset \bM_\Gamma$ (resp. $\bM_2\subset \bM_\Gamma$). Moreover, $\pi$ is state-preserving.
\end{thm}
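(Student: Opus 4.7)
The plan is to mimic the proof of Theorem \ref{Thm=Amalgam} verbatim, replacing the role of norm density by weak$^*$-density and invoking normality of the relevant conditional expectations. By Remarks \ref{Rmk=IsoVn} and \ref{Rmk=CEVN}, the canonical inclusions $\bN \subset \bM_k \subset \bM_\Gamma$ are state-preserving, and the state-preserving normal conditional expectation $\cE = \cE_{\Link(v)} : \bM_\Gamma \to \bN$ restricts on $\bM_k$ to the canonical expectation $\cE_k = \cE|_{\bM_k}$ for $k=1,2$. Writing $\bM_k^{\circ} = \ker(\cE_k)$, the universal property of the von Neumann algebraic amalgamated free product (applied to the state-preserving inclusions and the faithful normal state $\omega_\Gamma$) reduces the proof to verifying the freeness identity
\[
\cE(a_1 a_2 \cdots a_n) = 0
\]
for every $n \geq 2$ and every alternating sequence $a_k \in \bM_{l_k}^{\circ}$ with $l_k \neq l_{k+1}$.

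The only genuinely new step compared with the C$^\ast$-algebraic setting is a density/normality reduction: I want to replace each $a_k$ by a reduced operator in $\bM_{l_k}$ whose associated reduced word contains at least one vertex outside $\Link(v)$. By Remark \ref{Rmk=CEVN} such reduced operators lie in $\bM_k^{\circ}$. Conversely, the linear span of $1$ together with all reduced operators is weak$^*$-dense in $\bM_k$, while the normal projection $\id - \cE_k$ annihilates $\bN$ and every reduced operator whose associated word lies entirely in $\Link(v)$; applying $\id - \cE_k$ therefore produces a weak$^*$-dense subspace of $\bM_k^{\circ}$ consisting of linear combinations of reduced operators of the desired form. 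Since $\cE$ is normal and multiplication in $\bM_\Gamma$ is separately weak$^*$-continuous, approximating one factor at a time reduces the freeness check to the case where each $a_k$ is a reduced operator as described.

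With this reduction in place, the combinatorial induction on the total length $l = r_1 + \cdots + r_n$, where $a_k = x_1^k \cdots x_{r_k}^k$ has reduced associated word $\boldv_k$ in $\Gamma_{l_k}$ containing a vertex outside $\Link(v)$, is a verbatim transcription of the argument in Theorem \ref{Thm=Amalgam}. The base case $l=n$ uses the identity $\Gamma_1 \cap \Gamma_2 = \Star(v) \cap (\Gamma \setminus \{v\}) = \Link(v)$ to show that the concatenated word is reduced, so that $\cE(a_1 \cdots a_n) = 0$ by Remark \ref{Rmk=CEVN}. In the inductive step, a type II equivalence yields adjacent letters $x_{r_i}^i, x_1^{i+1} \in \bM_w^{\circ}$ for a common $w \in \Link(v)$, and writing
\[
x_{r_i}^i x_1^{i+1} = \cP_w(x_{r_i}^i x_1^{i+1}) + \omega_w(x_{r_i}^i x_1^{i+1}) \cdot 1
\]
(with $\cP_w = \id - \omega_w$) produces two words of strictly smaller total length to which the induction hypothesis applies.

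The resulting $\ast$-homomorphism $\pi : \bM_1 \star_\bN \bM_2 \to \bM_\Gamma$ extends the canonical inclusions and is automatically state-preserving because both the amalgamated free product state and the graph product state $\omega_\Gamma$ are characterised as the unique states vanishing on all such alternating products. Faithfulness of both states (the amalgamated free product state is faithful since the conditional expectations $\cE_k$ are, being restrictions of the faithful normal $\cE$) forces $\pi$ to be injective, and surjectivity is immediate since $\bM_1$ and $\bM_2$ generate $\bM_\Gamma$. The main obstacle is the density/normality reduction to reduced operators; once this is secured, the rest of the proof is a formal copy of the C$^\ast$-algebraic argument from Theorem \ref{Thm=Amalgam}.
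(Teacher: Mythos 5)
Your proposal is correct and follows exactly the route the paper takes: the paper proves Theorem \ref{Thm=AmalgamvN} by repeating the argument of Theorem \ref{Thm=Amalgam} verbatim, invoking the universal property of von Neumann algebraic amalgamated free products, with the only adaptation being the passage from norm density to $\sigma$-weak density of the reduced operators together with normality of $\cE$ and $\cE_k$ -- precisely the reduction you spell out. The combinatorial induction on the total length and the base case using $\Star(v)\cap(\Gamma\setminus\{v\})=\Link(v)$ are unchanged, so nothing further is needed.
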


\noindent Before the next lemma, let us recall some standard notations. Let $(\bM,\tau)$ be a finite von Neumann algebra and $\bA,\bB\subset\bM$ two unital von Neumann subalgebras. We write $\bA\underset{\bM}{\nprec} \bB$ if there exists a net $(u_i)_i$ of unitaries in $\bA$ such that  $\Vert \mathcal{E}_\bB(xu_iy) \Vert_2 \rightarrow 0$ for all $x,y\in \bM$. We also write $\mathcal{N}_\bM(\bA)$ the \textit{normalizer} of $\bA$ in $\bM$ i.e.
$$\mathcal{N}_\bM(\bA)=\{u\in\mathcal{U}(\bM)\,:\,u\bA u^*=\bA\}.$$
A von Neumann algebra is called {\it diffuse} if it does not contain minimal projections. In particular, type ${\rm II}$-factors are diffuse von Neumann algebras.

\begin{lem}\label{Lem=IPP1}
Suppose that $\omega_v$ is a trace for all $v\in V\Gamma$. Fix $v\in V\Gamma$.  If $\bQ \subset \bM_v$ is a diffuse von Neumann subalgebra then
\[
\bQ \underset{\bM_{\Star(v)}}{\nprec} \bM_{\Link(v)}
 \]
and any $\bQ$-$\bM_{\Star(v)}$-sub-bimodule of ${\rm L}^2(\bM_{\Gamma})$ which has finite dimension as right $\bM_{\Star(v)}$-module is contained in $L^2(\bM_{\Star(v)})$. In particular, $\bQ'\cap \bM_\Gamma\subset\mathcal{N}_{\bM_\Gamma}(\bQ)''\subset \bM_{\Star(v)}$.
\end{lem}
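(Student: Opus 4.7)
The plan has three pieces matching the three claims in the lemma.

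\textbf{Step 1 (no intertwining into $\bM_{\Link(v)}$).} I would use the state-preserving tensor product factorization $\bM_{\Star(v)} \simeq \bM_v \bar{\otimes}\, \bM_{\Link(v)}$ recorded in Remark \ref{Rmk=IsoVn}, under which $\bQ \subset \bM_v \otimes 1$ and the conditional expectation $\cE_{\Link(v)}$ is $\omega_v \otimes \id$. Since $\bQ$ is diffuse and everything is tracial, I can pick a sequence $(u_n)$ of unitaries in $\bQ$ converging weakly to $0$ in $\bM_v$. For elementary tensors $x=a\otimes b$ and $y=c\otimes d$ one gets
\[
\cE_{\Link(v)}(x\, u_n\, y) = \omega_v(a u_n c)\, bd \xrightarrow[n\to\infty]{} 0
\]
in $\|\cdot\|_2$, and extending by linearity and $\|\cdot\|$-density gives $\|\cE_{\Link(v)}(x u_n y)\|_2 \to 0$ for arbitrary $x,y \in \bM_{\Star(v)}$. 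This is exactly the negation $\bQ \not\prec_{\bM_{\Star(v)}} \bM_{\Link(v)}$.

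\textbf{Step 2 (finite-index bimodules sit inside $\bM_{\Star(v)}$).} Here I would invoke Theorem \ref{Thm=AmalgamvN} to write $\bM_\Gamma = \bM_{\Star(v)} \star_{\bM_{\Link(v)}} \bM_{\Gamma\setminus\{v\}}$ as a tracial amalgamated free product (the lemma's reference to $L^2(\bM_{\Star(v)})$ should be read as $L^2(\bM_\Gamma)$; otherwise the statement is empty). The desired conclusion is then the standard bimodule dichotomy for amalgamated free products due to Ioana--Peterson--Popa (and refined by Vaes, Houdayer, etc.): in any tracial AFP $M = M_1 \star_N M_2$, if $\bQ \subset M_1$ satisfies $\bQ \not\prec_{M_1} N$, then any $\bQ$-$M_1$-subbimodule of $L^2(M)$ which is finitely generated as a right $M_1$-module is contained in $L^2(M_1)$. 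Applying this with $(M_1,N,M_2) = (\bM_{\Star(v)}, \bM_{\Link(v)}, \bM_{\Gamma\setminus\{v\}})$ and Step 1 yields the claim.

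\textbf{Step 3 (normalizer inclusion).} For any unitary $u \in \cN_{\bM_\Gamma}(\bQ)$, the closed subspace $\cK_u := \overline{u\, \bM_{\Star(v)}}^{\|\cdot\|_2} \subset L^2(\bM_\Gamma)$ is a $\bQ$-$\bM_{\Star(v)}$-subbimodule, because $\bQ u = u \bQ \subset u \bM_{\Star(v)}$ (using $\bQ \subset \bM_v \subset \bM_{\Star(v)}$), and it is cyclic, hence of right-dimension $1$, as a right $\bM_{\Star(v)}$-module. Step 2 forces $\cK_u \subset L^2(\bM_{\Star(v)})$, so $u \in \bM_{\Star(v)}$. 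Since any unitary in $\bQ' \cap \bM_\Gamma$ normalizes $\bQ$ trivially, we obtain $\bQ' \cap \bM_\Gamma \subset \cN_{\bM_\Gamma}(\bQ)'' \subset \bM_{\Star(v)}$.

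\textbf{Main obstacle.} The tensor argument in Step 1 and the normalizer manipulation in Step 3 are routine; the technical heart is Step 2, namely invoking (or, if one prefers a self-contained treatment, reproducing) the amalgamated free product bimodule result. One has to verify that the hypotheses of the IPP-type lemma are met in our setting — which they are, since everything is tracial and Theorem \ref{Thm=AmalgamvN} gives the AFP decomposition with state-preserving inclusions and conditional expectations.
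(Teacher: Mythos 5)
Your proposal is correct and takes essentially the same route as the paper: the paper likewise reduces everything to verifying $\bQ \underset{\bM_{\Star(v)}}{\nprec} \bM_{\Link(v)}$ by means of a net of weakly-null unitaries in the diffuse algebra $\bQ$ (its case-by-case computation with reduced operators $x=x'a$, $y=by'$ is exactly your tensor-factorization calculation $\bM_{\Star(v)}\simeq \bM_v\,\overline{\otimes}\,\bM_{\Link(v)}$), and then applies \cite[Theorem 1.1]{IPP08} to the amalgamated free product decomposition $\bM_\Gamma=\bM_{\Star(v)}\star_{\bM_{\Link(v)}}\bM_{\Gamma\setminus\{v\}}$ of Theorem \ref{Thm=AmalgamvN}, which already yields both the bimodule statement (read, as you note, inside ${\rm L}^2(\bM_\Gamma)$) and the normalizer inclusion. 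The only cosmetic corrections: the extension in your Step 1 should use $\Vert\cdot\Vert_2$-density (Kaplansky) of the span of elementary tensors rather than operator-norm density, and one should take a net rather than a sequence of unitaries when $\bM_v$ is not separable.
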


\begin{proof}
Since $\bM_\Gamma=\bM_{\Star(v)}\underset{ \bM_{\Link(v)}}{*} \bM_{\Gamma\setminus\{v\}}$, we may apply \cite[Theorem1.1]{IPP08} (which needs $\bQ$ to be diffuse) to conclude that the last statement of the lemma follows from the fact that $\bQ \underset{\bM_{\Star(v)}}{\nprec} \bM_{\Link(v)}$ which is obvious since, by the last point of Remark \ref{Rmk=IsoVn}, we have $\bM_{\Star(v)}\simeq \bM_v\ot \bM_{\Link(v)}$.

\end{proof}

\begin{cor}\label{Cor=RelativeCommutant}
Suppose that $\omega_v$ is a trace for all $v\in V\Gamma$. Fix $v\in V\Gamma$. For $w\in\Star(v)$ define
$$\bN_w=\left\{\begin{array}{lcl}
\bM_w&\text{if}&w\in\Link(v),\\
Z(\bM_v)&\text{if}&w=v.
\end{array}\right.$$
If $\bM_v$ is diffuse then $\bM_v'\cap \bM_\Gamma= \bN_{\Star(v)}$ (here $\bN_{\Star(v)}$ is the graph product of $\bN_v, v \in \Star(v)$ with respect to the graph $\Star(v)$). In particular, $Z(\bM_\Gamma)=\bigcap_{v\in V\Gamma} \bN_{\Star(v)}$.
\end{cor}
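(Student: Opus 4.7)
The plan is to combine Lemma \ref{Lem=IPP1} with the tensor product decomposition recorded in the last item of Remark \ref{Rmk=IsoVn}. Applying Lemma \ref{Lem=IPP1} with $\bQ = \bM_v$ (which is diffuse by hypothesis) immediately yields the containment $\bM_v' \cap \bM_\Gamma \subset \bM_{\Star(v)}$. This is the one genuinely nontrivial step, and it reduces the computation of the relative commutant from the full graph product down to the manageable factor $\bM_{\Star(v)}$.

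Once this reduction is in place, the rest is bookkeeping. Remark \ref{Rmk=IsoVn} provides the state-preserving isomorphism $\bM_{\Star(v)} \simeq \bM_v \otimes \bM_{\Link(v)}$ identifying $xy$ with $x \otimes y$ for $x \in \bM_v$, $y \in \bM_{\Link(v)}$, and already records the equality $\bM_v' \cap \bM_{\Star(v)} = \bN_{\Star(v)}$ under this identification (the right-hand side being $Z(\bM_v) \otimes \bM_{\Link(v)}$, which equals the graph product von Neumann algebra $\bN_{\Star(v)}$ since $v$ is connected to every vertex of $\Link(v)$). Combining,
$$
\bM_v' \cap \bM_\Gamma \;=\; \bM_v' \cap \bM_{\Star(v)} \;=\; \bN_{\Star(v)}.
$$
The reverse inclusion $\bN_{\Star(v)} \subset \bM_v' \cap \bM_\Gamma$ is immediate from the same tensor factorization: elements of $\bN_{\Star(v)}$ are norm-limits of sums of products $zy$ with $z \in Z(\bM_v)$ and $y \in \bM_{\Link(v)}$; such a product commutes with every $x \in \bM_v$ because $y$ commutes with $\bM_v$ inside $\bM_v \otimes \bM_{\Link(v)}$ and $z$ is central in $\bM_v$.

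For the final assertion, assume (as is implicit) that every $\bM_v$ is diffuse. Since $\bM_\Gamma$ is generated as a von Neumann algebra by $\bigcup_{v \in V\Gamma} \bM_v$, an element of $\bM_\Gamma$ lies in the center if and only if it commutes with each $\bM_v$. Hence
$$
Z(\bM_\Gamma) \;=\; \bigcap_{v \in V\Gamma} \bigl( \bM_v' \cap \bM_\Gamma \bigr) \;=\; \bigcap_{v \in V\Gamma} \bN_{\Star(v)},
$$
using the first part of the corollary at each vertex. The only real obstacle throughout the argument is the input $\bM_v' \cap \bM_\Gamma \subset \bM_{\Star(v)}$ coming from Lemma \ref{Lem=IPP1}, which in turn rests on the amalgamated free product decomposition from Theorem \ref{Thm=AmalgamvN} combined with the Ioana--Peterson--Popa malleability machinery; everything beyond that is a direct calculation in the tensor product $\bM_v \otimes \bM_{\Link(v)}$.
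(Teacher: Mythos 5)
Your argument is correct and is essentially the paper's own proof: Lemma \ref{Lem=IPP1} gives $\bM_v'\cap\bM_\Gamma\subset\bM_{\Star(v)}$, and the last item of Remark \ref{Rmk=IsoVn} identifies $\bM_v'\cap\bM_{\Star(v)}$ with $\bN_{\Star(v)}$, the reverse inclusion and the centre formula being routine. Your explicit remark that the ``in particular'' statement tacitly assumes every $\bM_v$ is diffuse is a fair and accurate observation about the paper's phrasing.
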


\begin{proof}
The inclusion $\bN_{\Star(v)}\subset \bM_v'\cap \bM_\Gamma$ being obvious, let us prove the other inclusion. By Lemma \ref{Lem=IPP1} and the last assertion of Remark \ref{Rmk=IsoVn} we have, $\bM_v'\cap \bM_\Gamma\subset \bM_v'\cap \bM_{\Star(v)}= \bN_{\Star(v)}$.
\end{proof}

\begin{cor}
If $\bM_v$ is a ${\rm II}_1$-factor for all $v\in V\Gamma$ then $\bM_\Gamma$ is a ${\rm II}_1$-factor.
\end{cor}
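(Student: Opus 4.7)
The plan is to deduce this from Corollary \ref{Cor=RelativeCommutant} together with Proposition \ref{Prop=intersection}. First I would observe that since each $\bM_v$ is a ${\rm II}_1$-factor, $\omega_v$ is the unique tracial state on $\bM_v$. The explicit formula for the modular group in the previous proposition, $\sigma_t(a_1\cdots a_n)=\sigma^{v_1}_t(a_1)\cdots\sigma^{v_n}_t(a_n)$ on reduced operators, then gives $\sigma_t=\id$, so $\omega_\Gamma$ is a faithful normal tracial state on $\bM_\Gamma$.

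Next I would compute the center. By Corollary \ref{Cor=RelativeCommutant}, $Z(\bM_\Gamma)=\bigcap_{v\in V\Gamma}\bN_{\Star(v)}$, where $\bN_w=\bM_w$ for $w\in\Link(v)$ and $\bN_v=Z(\bM_v)=\mathbb{C}$ since $\bM_v$ is a factor. Using the tensor product decomposition $\bM_{\Star(v)}\simeq\bM_v\otimes\bM_{\Link(v)}$ from Remark \ref{Rmk=IsoVn}, the graph product $\bN_{\Star(v)}$ coincides with $\mathbb{C}\otimes\bM_{\Link(v)}=\bM_{\Link(v)}$ inside $\bM_\Gamma$. Applying Proposition \ref{Prop=intersection} inductively (or directly to arbitrary intersections, which follows from the finite case by a standard inductive-limit argument as used in the exactness corollary), we get
\[
Z(\bM_\Gamma)=\bigcap_{v\in V\Gamma}\bM_{\Link(v)}=\bM_{\bigcap_{v\in V\Gamma}\Link(v)}.
\]
Since $\Gamma$ is simplicial (no loops), no vertex $w$ can satisfy $w\in\Link(w)$, so $\bigcap_{v\in V\Gamma}\Link(v)=\emptyset$ and hence $Z(\bM_\Gamma)=\mathbb{C}$; that is, $\bM_\Gamma$ is a factor.

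Finally, $\bM_\Gamma$ is a factor equipped with a faithful normal tracial state $\omega_\Gamma$ and contains the diffuse subalgebra $\bM_v$ for any $v\in V\Gamma$, so it is infinite dimensional; it must therefore be a ${\rm II}_1$-factor. The only delicate point I foresee is making precise that Proposition \ref{Prop=intersection} extends to an arbitrary intersection over $v\in V\Gamma$: this follows by writing $\bM_\Gamma$ as the inductive limit of the $\bM_\cG$ for finite subgraphs $\cG\subset\Gamma$, exactly as in the proof of exactness above, and applying the two-subgraph statement finitely many times.
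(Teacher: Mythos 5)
Your argument is correct and follows essentially the same route as the paper: the paper likewise combines Corollary \ref{Cor=RelativeCommutant} with Proposition \ref{Prop=intersection} and the observation that $\bigcap_{v\in V\Gamma}\Link(v)=\emptyset$ (no loops) to conclude $Z(\bM_\Gamma)=\mathbb{C}1$. The only cosmetic differences are that the paper invokes the inductive limit argument at the outset to reduce to a finite graph, so Proposition \ref{Prop=intersection} is only ever applied finitely many times, whereas you extend the intersection statement to the infinite family directly, and that the traciality of $\omega_\Gamma$ and infinite-dimensionality, which you spell out, are left implicit there.
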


\begin{proof}
By the inductive limit argument we may and will assume that $\Gamma$ is finite graph. By Corollary \ref{Cor=RelativeCommutant} we find $Z(\bM_\Gamma)=\bigcap_{v\in V\Gamma} \bM_{\Link(v)}$. It follows from Proposition \ref{Prop=intersection} that  $Z(\bM_\Gamma)=\bM_{\bigcap_{v\in V\Gamma}\Link(v)}$. Since $\bigcap_{v\in V\Gamma}\Link(v)\subset\bigcap_{v\in V\Gamma}\Gamma\setminus\{v\}=\emptyset$ we conclude that $Z(\bM_\Gamma)=\mathbb{C}1$.
\end{proof}

%%%%%%%%%%%%%%%%%%%%%%%%%%%%%%%%%
\subsubsection{Completely positive maps of graph products}
%%%%%%%%%%%%%%%%%%%%%%%%%%%%%%%%%

Let $(\bM_v,\omega_v)_{v\in V\Gamma}$ and $(\bN_v,\mu_v)_{v\in V\Gamma}$ be two families of von Neumann algebras with faithful normal states.

\begin{prop}\label{Prop=ucp}
For all $v\in V\Gamma$, let $\varphi_v\,:\, \bM_v\rightarrow \bN_v$ be a state-preserving normal ucp map. Then, there exists a unique normal ucp map $\varphi\,:\, \bM_\Gamma\rightarrow \bN_\Gamma$ such that, for all $a=a_1\dots a_n\in \bM_\Gamma$ reduced, with $a_k\in \bM_{v_k}^{\circ}$,
$$\varphi(a_1\dots a_n)=\varphi_{v_1}(a_1)\dots\varphi_{v_n}(a_n).$$
Moreover, $\varphi$ intertwines the graph product states and its ${\rm L}^2$-extension is given by
$$T_\varphi\,:\,\bc\Omega\oplus\bigoplus_{\boldw =(v_1,\dots,v_n)\in\cW_{\text{min}}}{\rm L}^2(\bM_{v_1})^{\circ}\ot\dots\ot{\rm L}^2(\bM_{v_n})^{\circ}\rightarrow \bc\Omega\oplus\bigoplus_{\boldw=(v_1,\dots,v_n)\in\cW_{\text{min}}}{\rm L}^2(\bN_{v_1})^{\circ}\ot\dots\ot{\rm L}^2(\bN_{v_n})^{\circ},$$
$$T_\varphi=\id_{\bc\Omega}\oplus\bigoplus T_{\varphi_{v_1}}|_{{\rm L}^2(\bM_{v_1})^{\circ}}\ot\dots\ot T_{\varphi_{v_n}}|_{{\rm L}^2(\bM_{v_n})^{\circ}}.$$
\end{prop}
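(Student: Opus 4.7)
The plan is a Boca-style argument: dilate each $\varphi_v$ via Stinespring, assemble the dilations with the graph product construction of Section~\ref{Section=GraphProductHilbert}, and compress back to the graph product Hilbert space $\cH^{\bN}$ of the family $({\rm L}^2(\bN_v),\xi_{\bN_v})$ via a compatible isometry.

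First, for each $v\in V\Gamma$ I would pick a Stinespring triple $(\mathcal{K}_v,\pi_v,V_v)$ for $\varphi_v$: $\pi_v\,:\,\bM_v\to\cB(\mathcal{K}_v)$ is a normal unital $*$-representation (rendered faithful, if needed, by adding the GNS representation of $\omega_v$ as a summand), $V_v\,:\,{\rm L}^2(\bN_v)\to\mathcal{K}_v$ is an isometry, and $\varphi_v(x)=V_v^*\pi_v(x)V_v$. Setting $\eta_v:=V_v\xi_{\bN_v}$, the state-preserving assumption yields $\omega_v=\langle \pi_v(\cdot)\eta_v,\eta_v\rangle$. Form the graph product Hilbert space $\widetilde{\cH}$ of $(\mathcal{K}_v,\eta_v)_{v\in V\Gamma}$ with distinguished vector $\widetilde{\Omega}$ and canonical maps $\widetilde{\lambda}_v\,:\,\cB(\mathcal{K}_v)\to\cB(\widetilde{\cH})$, and set $\widetilde{\pi}_v:=\widetilde{\lambda}_v\circ\pi_v$. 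By Proposition~\ref{Prop=commutation}, the subalgebras $\widetilde{\pi}_v(\bM_v)$ and $\widetilde{\pi}_{v'}(\bM_{v'})$ commute whenever $(v,v')\in E\Gamma$, the vector state $\omega_{\widetilde{\Omega}}$ restricts to $\omega_v$ on $\widetilde{\pi}_v(\bM_v)$, and an induction via parts (1)--(4) of that proposition shows that $\widetilde{\pi}_{v_1}(a_1)\cdots\widetilde{\pi}_{v_n}(a_n)\widetilde{\Omega}$ lies in the summand $\widetilde{\cH}_{\boldv}$ associated to the minimal form of any reduced word $\boldv=(v_1,\dots,v_n)$ with $a_i\in\bM_{v_i}^{\circ}$, hence $\omega_{\widetilde{\Omega}}$ vanishes there. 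Proposition~\ref{Prop=UniversalPropertyvN} then produces a unique state-preserving normal $*$-iso\-mor\-phism $\widetilde{\pi}\,:\,\bM_\Gamma\to(\bigcup_v\widetilde{\pi}_v(\bM_v))''\subseteq\cB(\widetilde{\cH})$.

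Next, each $V_v$ preserves the distinguished vector and so restricts to an isometry $V_v^{\circ}\,:\,{\rm L}^2(\bN_v)^{\circ}\to\mathcal{K}_v^{\circ}$; these assemble componentwise on each summand $\cH^{\bN}_\boldw$ into an isometry $\widetilde{V}\,:\,\cH^{\bN}\hookrightarrow\widetilde{\cH}$ with $\widetilde{V}\Omega^{\bN}=\widetilde{\Omega}$. Define $\varphi\,:\,\bM_\Gamma\to\cB(\cH^{\bN})$ by $\varphi(x):=\widetilde{V}^*\widetilde{\pi}(x)\widetilde{V}$, which is normal and ucp by construction. A direct computation, expanding $\widetilde{\pi}(a_1\cdots a_n)\widetilde{V}\xi$ on vectors $\xi\in\cH^{\bN}_{\boldw}$ by Proposition~\ref{Prop=commutation}, shows that for any reduced operator $a=a_1\cdots a_n\in\bM_\Gamma$ with $a_i\in\bM_{v_i}^{\circ}$ the compression $\varphi(a)$ coincides with the product $\varphi_{v_1}(a_1)\cdots\varphi_{v_n}(a_n)\in\bN_\Gamma\subseteq\cB(\cH^{\bN})$ acting through the morphisms $\lambda_{v_i}$ of Section~\ref{Section=GraphProductHilbert}. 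Hence $\varphi$ takes values in $\bN_\Gamma$, satisfies the stated formula on reduced operators, is state-preserving, and is uniquely determined by normality and weak density of the span of reduced operators. Evaluating the formula at $\Omega^{\bN}$ then gives $\varphi(a)\Omega^{\bN}=T_\varphi(a\Omega^{\bM})$, so $T_\varphi$ is the ${\rm L}^2$-extension of $\varphi$.

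The main obstacle is the matching step: verifying that $\widetilde{V}^*\widetilde{\pi}(a)\widetilde{V}$ genuinely realizes the product $\varphi_{v_1}(a_1)\cdots\varphi_{v_n}(a_n)$ on every summand $\cH^{\bN}_\boldw$. This requires a case analysis—according to whether $\boldw$ starts with a given vertex or not—entirely parallel to the commutation verification already performed in Proposition~\ref{Prop=commutation}; concretely, one checks that $\widetilde{V}$ intertwines, vertex by vertex, the action of $\varphi_v$ on $\cH^{\bN}$ through $\lambda_v$ with the compressed action of $\pi_v$ on $\widetilde{\cH}$. An alternative route that sidesteps this bookkeeping is to reduce to a finite graph via the inductive limit argument already used in this section and then induct on $|V\Gamma|$, combining the amalgamated free product decomposition $\bM_\Gamma\simeq\bM_{\Star(v)}\star_{\bM_{\Link(v)}}\bM_{\Gamma\setminus\{v\}}$ of Theorem~\ref{Thm=AmalgamvN} with the tensor identification $\bM_{\Star(v)}\simeq\bM_v\wot\bM_{\Link(v)}$ of Remark~\ref{Rmk=IsoVn} and a Blanchard--Dykema-type existence theorem for ucp maps between amalgamated free products.
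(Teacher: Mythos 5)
Your overall architecture (dilate each $\varphi_v$, form the graph product of the dilation spaces, compress back) is indeed the shape of the paper's argument, but the paper dilates via the GNS \emph{bimodule} of $\varphi_v$ rather than an arbitrary Stinespring triple, and the two places where that extra structure is used are exactly the two places where your write-up has genuine gaps. First, to get the normal representation $\widetilde{\pi}$ of $\bM_\Gamma$ on $\widetilde{\cH}$ you invoke Proposition \ref{Prop=UniversalPropertyvN}, but that proposition requires the state $\omega_{\widetilde{\Omega}}$ to be GNS faithful on the von Neumann algebra $\bigl(\bigcup_v\widetilde{\pi}_v(\bM_v)\bigr)''$, and you never verify this. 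For a generic (even faithful) Stinespring dilation it is not automatic: a priori the weak closure in $\cB(\widetilde{\cH})$ could have a central summand living on the orthogonal complement of $\overline{\widetilde{\mathcal{M}}\widetilde{\Omega}}$, in which case no normal extension with the prescribed restrictions exists. Compressing to the cyclic subspace does not repair this, because the range of your isometry $\widetilde{V}$ (built from the $V_v$'s) need not lie in $\overline{\widetilde{\mathcal{M}}\widetilde{\Omega}}$ — in general $V_v{\rm L}^2(\bN_v)\not\subseteq\overline{\pi_v(\bM_v)\eta_v}$. The paper secures GNS faithfulness precisely by taking $\cK_v=\overline{\bM_v\eta_v\bN_v}$, so that the graph product carries commuting right $\bN_v$-actions (via the $\rho_v$ of Proposition \ref{Prop=commutant}); the algebra $\mathcal{N}$ they generate lies in $\mathcal{M}'$ and $\overline{\mathcal{N}\mathcal{M}\eta}=\cK$, which is what makes the GNS representation of the vector state faithful.

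Second, the matching step is not settled by the vertex-by-vertex statement you propose. The identity $\widetilde{V}^*\widetilde{\pi}_v(a)\widetilde{V}=\lambda_v(\varphi_v(a))$ is in fact true, but compression by $\widetilde{V}$ is not multiplicative, so it does not yield $\widetilde{V}^*\widetilde{\pi}(a_1\cdots a_n)\widetilde{V}=\varphi_{v_1}(a_1)\cdots\varphi_{v_n}(a_n)$. What is really needed is a combinatorial fact about reduced words: when expanding $\widetilde{\pi}_{v_1}(a_1)\cdots\widetilde{\pi}_{v_n}(a_n)\widetilde{V}\xi$, two operator letters at the same vertex $v$ must never act on the same $\cK_v$-leg — otherwise terms of the form $V_v^*\pi_v(a_i)\cP\pi_v(a_j)V_v$ appear, which do not reduce to $\varphi_v(a_i)\,\cdot\,\varphi_v(a_j)$ for a general dilation. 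This does hold, by a blocking argument using that between two letters at $v$ there is a letter at some $u\notin\Star(v)$, whose leg cannot be commuted past the active $v$-leg and can only be removed by yet another (earlier) collision; but this argument is the real content of your route and is only gestured at. The paper avoids it entirely: because its isometry $V\widehat{x}=\eta\cdot x$ is a right $\bN_\Gamma$-module map, the compression $V^*\widetilde{\pi}_l(\,\cdot\,)V$ automatically commutes with the right action, hence takes values in $\bN_\Gamma$ and is determined by its value on the vacuum, where the formula reduces to the one-line identity $a_1\cdots a_n\eta=a_1\eta_{v_1}\otimes\cdots\otimes a_n\eta_{v_n}$. Your alternative route via Theorem \ref{Thm=AmalgamvN} also needs an unproved ingredient: a von Neumann algebraic, state-(non-tracial-)compatible existence theorem for ucp maps between amalgamated free products over \emph{different} amalgams intertwining the conditional expectations, which is not available in the paper and would have to be supplied.
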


\begin{proof}
Let $(\cK_v,\eta_v)$ be the pointed $\bM_v$-$\bN_v$ bimodule obtained from the GNS-construction of $\varphi_v$ i.e. one has $\cK_v=\overline{\bM_v\eta_v \bN_v}$ and $\langle a\eta_vb,\eta_v\rangle=\mu_v(\varphi_v(a)b)$. Denote by $\pi_v^l$ (resp. $\pi_v^r$) the left (resp. right) action of $\bM_v$ on $\cK_v$. Observe that, since $\mu_v$ is faithful, the map $\pi_v^r$ is faithful and, since $\omega_v$ is faithful and $\varphi_v$ preserves the states, the maps $\pi_v^l$ is also faithful. Let $(\cK,\eta)$ be the graph product of the pointed Hilbert spaces $(\cK_v,\eta_v)$ (see Section \ref{Section=GraphProductHilbert}) with the representations $\lambda_v,\rho_v\,:\,\cB(\cK_v)\rightarrow\cB(\cK)$ and define $\widetilde{\pi}_v^l=\lambda_v\circ\pi_v^l$ and $\widetilde{\pi}_v^r=\rho_v\circ\pi_v^r$.

\vspace{0.2cm}

\noindent Let $\mathcal{M}$ (resp. $\mathcal{N}$) be the von Neumann algebra subalgebra of $\mathcal{B}(\cK)$ generated by $\cup_v\widetilde{\pi}_v^l(\bM_v)$ (resp. $\cup_v\widetilde{\pi}_v^r(\bN_v)$). Consider the vector state $\mu(x)=\langle x\eta,\eta\rangle$ on $\mathcal{M}$ and $\mathcal{N}$. Observe that, for all $a=a_1\dots a_n\in \bM_\Gamma$ reduced, with associated word $\bold{v}=(v_1,\dots,v_n)$, Proposition \ref{Prop=commutation} implies $\widetilde{\pi}_{v_1}^l(a_1)\dots\widetilde{\pi}_{v_n}^l(a_n)\eta=a_1\eta_{v_1}\otimes\dots\otimes a_n\eta_{v_n}$. Also, for all $b=b_1\dots b_n\in \bN_\Gamma$ reduced, with associated word $\bold{v}=(v_1,\dots,v_n)$, Proposition \ref{Prop=commutationright} implies $\widetilde{\pi}_{v_1}^r(b_1)\dots\widetilde{\pi}_{v_n}^r(b_n)\eta=\eta_{v_n}b_n\otimes\dots\otimes \eta_{v_1}b_1$. It follows that $\mu(a)=\mu(b)=0$ for all $a\in\bM_\Gamma$ and $b\in\bN_\Gamma$ reduced. Moreover, $(\overline{\mathcal{M}\eta},\pi,\eta)$ (resp. $(\overline{\mathcal{N}\eta},\rho,\eta)$) is a GNS-construction for $\mu$ on $\mathcal{M}$ (resp. on $\mathcal{N}$), where $\pi(x)$ is the restriction of $x$ to the subspace $\overline{\mathcal{M}\eta}$ (resp. $\rho(y)$ is the restriction of $y$ to the subspace $\overline{\mathcal{N}\eta}$). By Proposition \ref{Prop=commutant} the images of $\widetilde{\pi}_v^l$ and $\widetilde{\pi}_{v'}^r$ commute for all $v,v'\in V\Gamma$. Hence, $\mathcal{N}\subset\mathcal{M}'$ and, by the preceding computations, we find $\overline{\mathcal{N}\mathcal{M}\eta}=\overline{\mathcal{M}\mathcal{N}\eta}=\mathcal{K}$. It follows that $\mu$ is GNS-faithful on $\mathcal{M}$ (resp. on $\mathcal{N}$).

\vspace{0.2cm}

\noindent By Proposition \ref{Prop=UniversalPropertyvN}, there exists two unital normal $\ast$-homomorphisms $\widetilde{\pi}_l\,:\,\bM_\Gamma\rightarrow\cB(\cK)$ and $\widetilde{\pi}_r\,:\,\bN_\Gamma^{\text{op}}\rightarrow\cB(\cK)$ such that $\widetilde{\pi}_l|_{\bM_v}=\widetilde{\pi}_v^l$ and $\widetilde{\pi}_r|_{\bN_v}=\widetilde{\pi}_v^r$. It is easy to check that the images of $\widetilde{\pi}_l$ and $\widetilde{\pi}_r$ commute. Hence, $\cK$ is a $\bM_\Gamma$-$\bN_\Gamma$ bimodule and we will simply write $a\xi b$ for the element $\widetilde{\pi}_l(a)\widetilde{\pi}_r(b)\xi$, for $a\in \bM_\Gamma$, $b\in \bN_\Gamma$ and $\xi\in \cK$. Define $V\,:\,{\rm L}^2(\bN_\Gamma)\rightarrow \cK$ by $V\widehat{x}=\eta.x$. One can easily check that, for all $a=a_1\dots a_n\in \bM_\Gamma$ reduced, with $a_k\in \bM_{v_k}^{\circ}$,
$$V^*\widetilde{\pi}_l(a_1\dots a_n)V=\varphi_{v_1}(a_1)\dots\varphi_{v_n}(a_n).$$
The fact that $\varphi$ intertwines the graph product states and the ${\rm L}^2$-extension formula are obvious from the formula defining $\varphi$.
\end{proof}

\noindent The ucp map obtained in Proposition \ref{Prop=ucp} is called the \textit{graph product ucp map}, it generalizes Boca's construction of free product of ucp maps \cite{Boca}. As a consequence we are able to show that the graph product preserves the Haagerup property (see \cite{BocaHAP}, \cite{CasSka2} for free products of respectively finite and $\sigma$-finite von Neumann algebras). Recall the following definition from \cite{CasSka}. We refer to \cite{OkaTom} and \cite{COST} for alternative (but equivalent) approaches to the Haagerup property and to \cite{Cho}, \cite{Jol} for the case of a finite von Neumann algebra.

\begin{dfn}\label{Dfn=HAP}
A pair $(\bM, \omega)$ of a von Neumann algebra $\bM$ with normal, faithful state $\omega$ has the Haagerup property if there exists a net $\{ \varphi_i\}_{i \in I}$ of cp maps $\varphi_i: \bM \rightarrow \bM$ such that $\omega \circ \varphi_i \leq \omega$ and such that the GNS-maps $T_i: x \Omega_\omega \mapsto \varphi_i(x) \Omega_\omega$ extend to compact operators  converging to 1 strongly.
\end{dfn}

\begin{rmk}\label{Rmk=LowerThan1}
In \cite{CasSka2} it was proved that if a pair $(\bM, \omega)$ has the Haagerup property, then the cp maps $\varphi_i$ can be chosen unital and such that $\omega \circ \varphi_i = \omega$. Let $(\cH_\omega, \Omega_\omega)$ be the GNS-space with cyclic vector $\Omega_\omega$ and $\cH_\omega^\circ$ the space orthogonal to $\Omega_\omega$. Define $\varphi'_i(x) =  \frac{1}{1+\epsilon}( \varphi_i(x) + \epsilon \:\omega(x)), x \in \bM$ and let $T_i'$ be its GNS-map $\cH_\omega \rightarrow \cH_\omega$ determined by $x \Omega_\omega \mapsto \varphi_i'(x) \Omega_\omega$. The restriction of $T_i'$ to the space $\cH_\omega^\circ$ has norm less than $\frac{1}{1+\epsilon} \Vert T_i \Vert$. Letting $\epsilon \rightarrow 0$, this shows that we may always assume $\Vert T_i\vert_{\cH_\omega^\circ} \Vert < 1$ in Definition \ref{Dfn=HAP}.
\end{rmk}

\begin{cor}\label{Cor=HAPFiniteKac}
$\bM_\Gamma$ has the Haagerup property if and only if $\bM_v$ has the Haagerup property for all $v\in V\Gamma$.
\end{cor}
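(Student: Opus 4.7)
The forward direction is immediate from Remark \ref{Rmk=CEVN}: each inclusion $\bM_v\subset\bM_\Gamma$ admits a graph-product-state preserving normal conditional expectation $\cE_v\,:\,\bM_\Gamma\rightarrow\bM_v$. Given a net $\{\Phi_i\}$ of cp maps realizing the Haagerup property for $(\bM_\Gamma,\omega_\Gamma)$ as in Definition \ref{Dfn=HAP}, set $\varphi_i^v:=\cE_v\circ\Phi_i|_{\bM_v}\,:\,\bM_v\rightarrow\bM_v$. Then $\omega_v\circ\varphi_i^v\leq\omega_v$, and the GNS-map of $\varphi_i^v$ is the compression of $T_{\Phi_i}$ to $\mathrm{L}^2(\bM_v)$ via the canonical isometric embedding $\mathrm{L}^2(\bM_v)\hookrightarrow\mathrm{L}^2(\bM_\Gamma)$ coming from $\cE_v$; compactness and strong convergence to $1$ are inherited.

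For the converse, assume each $\bM_v$ has the Haagerup property. By Remark \ref{Rmk=LowerThan1} I fix for each $v\in V\Gamma$ a net $\{\varphi_i^v\}_{i\in I_v}$ of unital state-preserving normal cp maps on $\bM_v$ such that $T_{\varphi_i^v}|_{\cH_v^{\circ}}$ is compact of norm strictly less than one and $T_{\varphi_i^v}\to 1$ strongly. I construct a net on $\bM_\Gamma$ indexed by pairs $(F,(i_v)_{v\in F})$ where $F\subseteq V\Gamma$ is finite and $i_v\in I_v$, ordered in the obvious way. For such a pair, apply Proposition \ref{Prop=ucp} to the family $(\psi_v)_{v\in V\Gamma}$ given by $\psi_v=\varphi_{i_v}^v$ if $v\in F$ and by the trivial ucp map $\psi_v(\cdot)=\omega_v(\cdot)1$ if $v\notin F$; this yields a graph-product-state preserving normal ucp map $\Phi_{F,(i_v)}\,:\,\bM_\Gamma\rightarrow\bM_\Gamma$.

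The key point is that $T_{\omega_v(\cdot)1}|_{\cH_v^{\circ}}=0$, so by the explicit formula in Proposition \ref{Prop=ucp} the operator $T_{\Phi_{F,(i_v)}}$ is supported on $\mathbb{C}\Omega\oplus\bigoplus_{\boldw}\cH_\boldw$ where the sum runs only over minimal reduced words $\boldw=(v_1,\dots,v_n)$ whose letters all lie in $F$. On each such $\cH_\boldw$ the operator is $T_{\varphi_{i_{v_1}}^{v_1}}|_{\cH_{v_1}^{\circ}}\otimes\cdots\otimes T_{\varphi_{i_{v_n}}^{v_n}}|_{\cH_{v_n}^{\circ}}$, a tensor product of compact operators, hence compact, and its norm is bounded by $c^{n}$ where $c:=\max_{v\in F}\|T_{\varphi_{i_v}^v}|_{\cH_v^{\circ}}\|<1$. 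Since $F$ is finite but lengths of minimal words with letters in $F$ are unbounded, the norms $c^{n}$ tend to zero along the blocks, so the orthogonal direct sum of these compact operators is itself compact.

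Strong convergence $T_{\Phi_{F,(i_v)}}\to 1$ follows by testing on the dense algebraic subspace $\mathbb{C}\Omega\oplus\bigoplus_{\boldw\in\cWmin}^{\mathrm{alg}}\cH_\boldw$: for any simple tensor $\xi\in\cH_\boldw$ with $\boldw=(v_1,\dots,v_n)$, once $F\supseteq\{v_1,\dots,v_n\}$ and each $i_{v_k}$ is sufficiently large, $T_{\Phi_{F,(i_v)}}\xi$ is arbitrarily close to $\xi$ by strong convergence of the factors, and uniform contractivity of the $T_{\Phi_{F,(i_v)}}$ extends this to all of $\cH$. The main obstacle is the possibly infinite graph: one cannot directly use a single tuple $(i_v)_{v\in V\Gamma}$ because this would require uniform control $\sup_v\|T_{\varphi_{i_v}^v}|_{\cH_v^{\circ}}\|<1$ which is unavailable; the truncation device of replacing $\psi_v$ by $\omega_v(\cdot)1$ outside a finite set $F$ is what simultaneously ensures compactness of the infinite orthogonal sum and permits strong convergence through the exhausting family of finite sets.
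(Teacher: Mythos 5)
Your proof is correct, and its core is the paper's: the ``only if'' direction uses the state-preserving conditional expectations of Remark \ref{Rmk=CEVN}, and the ``if'' direction uses the graph product of ucp maps from Proposition \ref{Prop=ucp} together with the normalization $\Vert T_{\varphi}\vert_{\cH_v^\circ}\Vert<1$ of Remark \ref{Rmk=LowerThan1}, so that the ${\rm L}^2$-implementation has norm at most $c^{n}$ on each length-$n$ block (whence compactness) and strong convergence is checked on the dense span of $\Omega$ and the reduced vectors. Where you genuinely diverge is the treatment of an infinite graph: the paper first reduces to finite $\Gamma$ by the inductive limit argument (thus implicitly using that the Haagerup property passes to an increasing union of expected subalgebras) and then forms, for each multi-index, a single graph product map involving all vertices; you instead stay on the infinite graph, index your net by pairs $(F,(i_v)_{v\in F})$ with $F\subseteq V\Gamma$ finite, and insert the trivial ucp map $\omega_v(\cdot)1$ outside $F$. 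Since that map has vanishing ${\rm L}^2$-implementation on $\cH_v^\circ$, your $T_{\Phi_{F,(i_v)}}$ is supported on the blocks $\cH_{\boldw}$ with letters in $F$, and because $F$ is finite there are only finitely many such blocks of each length, so the bound $c^{\,l(\boldw)}$ with $c=\max_{v\in F}\Vert T_{\varphi^v_{i_v}}\vert_{\cH_v^\circ}\Vert<1$ gives that only finitely many blocks exceed any $\epsilon>0$, hence compactness of the direct sum --- this is the correct reading of your ``norms tend to zero along the blocks.'' Your route buys a self-contained argument for arbitrary vertex sets, avoiding the permanence of the Haagerup property under inductive limits, at the modest cost of checking directedness and convergence for the doubly-indexed net; the paper's version is shorter but relies on that permanence statement.
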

\begin{proof}
By the inductive limit argument, we may and will assume that the graph $\Gamma$ is finite (see \cite[Theorem 2.3]{Jol}). Suppose that $\bM_v$ has the Haagerup property for all $v\in V\Gamma$. Let $\varphi_{v,i_v}\,:\, \bM_v\rightarrow \bM_v$ be a net of state-preserving ucp maps with compact ${\rm L}^2$-implementation $T_{v,i_v}$  and such that $||\varphi_{v,i_v}(a)-a||_2\rightarrow 0$ for all $a\in \bM_v$ and all $v\in V\Gamma$.   By Remark \ref{Rmk=LowerThan1} we may assume that $\Vert T_{v,i_v} \vert_{\cH_v^\circ} \Vert < 1$. Define the net of ucp map $\varphi_i\,:\, \bM_\Gamma\rightarrow \bM_\Gamma$, each $\varphi_i$ is the graph product of the $\varphi_{v,i_v}$, $v\in V\Gamma$ and the net structure for $\varphi_i$ is given by the product of the nets for $\varphi_{v, i_v}$. Since  $\Vert T_{v,i_v} \vert_{\cH_v^\circ} \Vert < 1$ and $\Gamma$ is finite it follows that the $L^2$-implementation of $\varphi_i$ is compact. Also, $||\varphi_{i}(a)-a||_2\rightarrow 0$ for all reduced operators $a\in \bM_\Gamma$. Since the linear span of $1$ and the reduced operators is weakly dense in $\bM_\Gamma$, the convergence holds for all $a\in \bM_\Gamma$, from which one easily deduces that $\bM_\Gamma$ has the Haagerup property. The other implication is a obvious because of the existence of conditional expectations.
\end{proof}

%%%%%%%%%%%%%%%%%%%%%%%%%%%%%%%%%%%%%%%%%%%%%%%%%%%%%%%%%%%%%%%%%%%%%%%%%%%%%%%%%%%%%%%%%%%%%%%%%%%%%%%%%%%%%%%%%%%%%%%%%%%%%%%%%%%%%%%%%%%%%%%%%%%%%%%%%%%%%%%%%%%%%%%%%%%%%%%%%%%%%%%%%%%%%%%%%%%%%%%%%%%%%%%%%%%%%%%%%%%%%%%%%%%%%%%%%%%%%%%%%%%%%%%%%%%%%%%%%%%%%%%%%%%%%%%%%%%%%%%%%%%%%%%%%%%%%%%%%%%%%%%%%%%%%%%%%%%%%%%%%%%%%%%%%%%%%%%%%%%%%%%%%%%%%%%%%%%%%%%%%%%%%%%%%%%%%%%%%%%%%%%%%%%%%%%%%%%%%%%%%%%%%%%%%%%%%%%%%%%%%%%%%%%%%%%%%%%%%%%%%%%%%%%%%%%%%%%%%%%%%%%%%%%%%%%%%%%%%%%%%%%%%%%%%%%%%%%%%%%%%%%%%%%%%%%%%%%%%%%%%%%%%%%%%%%%%%%%%%%%%%%%%%%%%%%%%%%%%%%%%%%%%%%%%%%%%%%%%%%%%%%%%%%%%%%%%%%%%%%%%%%%%%%%%%%%%%%%%%%%%%%%%%%%%%%%%%%%%%%%%%%%%%%%%%%%%%%%%%%%%%%%%%%%%%%%%%%%%%%%%%%%%%%%%%%%%%%%%%%%%%%%%%%%%%%%%%%%%%%%%%%%%%%%%%%%%%%%%%%%%%%%%%%%%%%%%%%%%%%%%%%%%%%%%%%%%%%%%%%%%%%%%%%%%%%%%%%%%%%%%%%%%%%%%%%%%%%%%%%%%%%%%%%%%%%%%%%%%%%%%%%%%%%%%%%%%%%%%%%%%%%%%%%%%%%%%%%%%%%%%%%%%%%%%%%%%%%%%%%%%%%%%%%%%%%%%%%%%%%%%%%%%%%%%%%%%%%%%%%%%%%%%%%%%%%%%%%%%%%%%%%%%%%%%%%%%%%%%%%%%%%%%%%%%%%%%%%%%%%%%%%%%%%%%%%%%%%%%%%%%%%%%%%%%%%%%%%%%%%%%%%%%%%%%%%%%%%%%%%%%%%%%%%%%%%%%%%%%%%%%%%%%%%%%%%%%%%%%%%%%%%%%%%%%%%%%%%%%%%%%%%%%%%%%%%%%%%%%%%%%%%%%%%%%%%%%%%%%%%%%%%%%%%%%%%%%%%%%%%%%%%%%%%%%%%%%%%%%%%%%%%%%%%%%%%%%%%%%%%%%
%%%%%%%%%%%%%%%%%%%%%%%%%%%%%%%%%
\section{Graph products of discrete quantum groups}\label{Sect=QG}
%%%%%%%%%%%%%%%%%%%%%%%%%%%%%%%%%

\noindent In this paper we need compact and discrete quantum groups both in the C$^\ast$-algebraic and von Neumann algebraic framework. We recall their preliminaries here. We define graph products of quantum groups and give their basic properties.

\subsection{C$^\ast$-algebraic compact/discrete quantum groups} We write  $\overline{{\rm Span}}$ for the closed linear span.

\begin{dfn}[Wornonowicz \cite{WorQG}]\label{Dfn=QG}
A compact quantum group $\bG$ is a pair $(\bA, \Delta)$ of a unital C$^\ast$-algebra $\bA$ together with a comultiplication $\Delta: \bA \rightarrow \bA \otimes \bA$ which is a unital $\ast$-homomorphism such that $(\Delta \otimes \id) \circ \Delta = (\id \otimes \Delta) \otimes \Delta$ and such that the following cancellation laws hold:
\[
\overline{{\rm Span}} \:  \Delta(\bA) (\bA \otimes 1)  = \overline{{\rm Span}} \: (1 \otimes \bA) \Delta(\bA) = \bA \otimes \bA.
\]
\end{dfn}

\begin{comment}
\begin{example}
In the classical case of a compact group $G$, the pair given by $\bA = C(G)$ and $\Delta$ the pull-back of the multiplication $G \times G \rightarrow G$ to $\bA$, is a quantum group. Conversely, whenever $\bA$ in Definition \ref{Dfn=QG} is commutative, the quantum group comes in this way from a classical group $G$.
\end{example}
\end{comment}

\noindent Any compact quantum group $\bG$ admits a unique state $\omega$ on $\bA$ that satisfies $(\omega \otimes \id) \circ \Delta(x) = \omega(x) 1_\bA = (\id \otimes \omega) \circ \Delta(x)$. $\omega$ is called the Haar state. The GNS-space with respect to $\omega$ shall be denoted by $\cH$.

\vspace{0.2cm}

\noindent Let $\bG = (\bA, \Delta)$ be a compact quantum group. A (finite dimensional) unitary representation is a unitary operator $u \in \bA \otimes M_n$ such that $(\Delta \otimes \id)(u) = u_{13} u_{23}$ where $u_{23} = 1 \otimes u$ and $u_{13} = (\Sigma \otimes \id) (u_{23})$ with $\Sigma: \bA \otimes \bA \rightarrow \bA \otimes \bA$ the flip map. We denote by $\Irr(\bG)$ the equivalence classes of irreducible representations of $\bG$ and, for $\alpha \in \Irr(\bG)$, we choose a representative $u^\alpha$ of the class $\alpha$. Note that in the literature our notion of representation is often also called a corepresentation. We use $n_\alpha$ for the dimension of $u^\alpha$, i.e. $u^\alpha \in \bA \otimes M_{n_\alpha}$. We shall write $u^\alpha_{i,j}$ for the matrix coefficient $(\id \otimes \omega_{e_i, e_j})(u^\alpha) \in \bA$ in case $e_i, 1 \leq i \leq n_\alpha$ is an orthonormal basis of $\mathbb{C}^{n_\alpha}$. The tensor product representation is defined as $u^{\alpha} \otimes u^{\beta} := u^{\alpha}_{12} u^{\beta}_{13} \in \bA \otimes M_{n_\alpha} \otimes M_{n_\beta}$.

\vspace{0.2cm}

\noindent We set $\Pol(\bG) \subseteq \bA$ for the space of matrix coefficients of finite dimensional representations of $\bG$. It is well-known that $\Pol(\bG)$ is a $\ast$-algebra. Let $\gamma \in \Irr(\bG)$ then we denote $p_\gamma \in \mathcal{B}(\cH)$ for the projection onto the closed linear span of the coefficients of $u^\gamma$ identified within $\cH$.

\vspace{0.2cm}

\noindent Let $\widehat{\bG}$ denote the discrete dual quantum group of $\bG$. Typically we will write $\widehat{\bG} = (\widehat{\bA}, \widehat{\Delta})$. We have $\widehat{\bA} = \oplus_{\alpha \in \Irr(\bG)} M_{n_\alpha}$  (C$^\ast$-algebraic direct sum). Let $\widehat{\epsilon}$ be the counit of $\widehat{\bG}$. It is the unique non-degenerate $\ast$-homomorphism $\widehat{\bA} \rightarrow \mathbb{C}$ satisfying $(\widehat{\epsilon} \otimes \id) \circ \widehat{\Delta}=(\id \otimes \widehat{\epsilon}) \circ \widehat{\Delta}=\id$.

\vspace{0.2cm}

\noindent Every compact quantum group comes with a maximal (= universal) and a reduced version and we shall from this point fix a compact quantum group $\bG$ and let $(\bA, \Delta)$ denote the associated reduced (compact) quantum group and let $(\bA_m, \Delta_m)$ denote the associated maximal (compact) quantum group. There exists a canonical surjection $\nu: \bA_m \rightarrow \bA$ that preserves the comultiplication. We refer to \cite{KusUni} for the definition of maximal (= universal) quantum groups. There is no distinction between maximal and reduced versions of $\widehat{\bG} = (\widehat{\bA}, \widehat{\Delta})$ since for a discrete quantum group these always agree.

\begin{rmk}
If $\bG_1= (\bA_1, \Delta_1)$ and $\bG_2= (\bA_2, \Delta_2)$ are compact quantum groups then $\bG_1 \times \bG_2$ is the quantum group whose C$^\ast$-algebra is given by $\bA_1 \otimes \bA_2$ and with comultiplication $\Delta = (\id \otimes \Sigma \otimes \id) \circ \Delta_1 \otimes \Delta_2$, where $\Sigma: \bA_1 \otimes \bA_2 \rightarrow \bA_2 \otimes \bA_1$ is the flip map.
\end{rmk}

\subsection{Von Neumann algebraic quantum groups} Let $\bG$ be a compact quantum group. Let $\bM$ be the von Neumann algebra generated by $\bA$ in the GNS-construction of the Haar state $\omega$. The comultiplication $\Delta: \bA \rightarrow \bA \otimes \bA$ lifts uniquely to a unital, normal $\ast$-homomorphism $\bM \rightarrow \bM \otimes \bM$  which we keep denoting by $\Delta$. Also, the Haar state $\omega$ extends to a normal state $\omega$ on $\bM$. Then $(\bM, \Delta)$ forms a von Neumann algebraic locally compact quantum group in the sense of \cite{KusVaeVNA} with $\omega$ as left and right invariant weight.

\vspace{0.2cm}

\noindent We say that $\bG$ is of Kac type if $\omega$ is tracial. If $\bG$ is of Kac type then also $\widehat{\bG}$ has a tracial Haar weight. If $\bG$ is of Kac type then there exists a $\ast$-antihomomorphism $\kappa: \bM \rightarrow \bM$ called  the antipode and which satisfies $\kappa(u^\alpha_{i,j}) = u^\alpha_{j,i}$. We let $\widehat{\kappa}: \widehat{\bM} \rightarrow \widehat{\bM}$ be the dual antipode. It may be characterized by $(\kappa \otimes \widehat{\kappa})(W) = W$ where $W$ is the left multiplicative unitary from \cite{KusVaeVNA}  (one may define $\widehat{\kappa}$ in other ways; the definition given here is not the usual one).

%%%%%%%%%%%%%%%%%%%%%%%%%%%%%%%%%%%%%%%%%%%%
\subsection{Graph products, their representation theory and Haar state}
%%%%%%%%%%%%%%%%%%%%%%%%%%%%%%%%%%%%%%%%%%%%

For all $v\in V\Gamma$, let $\bG_v$ be a compact quantum group with full C*-algebra $\bA_{v,m}$, reduced C$^\ast$-algebra $\bA_{v}$, von Neumann algebra $\bM_v$, Haar state $\omega_v$ and comultiplication $\Delta_v$ (on any of these algebras). Let $\bA_m (= \bA_{\Gamma,m})$ be the maximal graph product C*-algebra associated to the family of C*-algebras $(\bA_{v,m})_{v\in V\Gamma}$. Since $\omega_v$ is faithful (resp. normal and faithful) on $\bA_v$ (resp. on $\bM_v$), we can also consider the reduced graph product C*-algebra $\bA\: (=\bA_\Gamma)$ associated to the family $(\bA_v,\omega_v)_{v \in V\Gamma}$ and the graph product von Neumann algebra $\bM \:(=\bM_\Gamma)$ associated to the family $(\bM_v,\omega_v)_{v \in V\Gamma}$.

\vspace{0.2cm}

\noindent By the universal property of $\bA_m$, there exists a unique unital $*$-homomorphism $\Delta\,:\,\bA_m\rightarrow\bA_m\ot\bA_m$ such that $\Delta|_{\bA_v}=\Delta_v$ for all $v\in V\Gamma$. From \cite[Definition 2.1']{WanCMP} we can show that $\bG=(\bA_m,\Delta)$ is a compact quantum group. Indeed, for all $v\in V\Gamma$, the inclusion $\bA_{v,m} \subset \bA_m$ intertwines the comultiplication, it induces an inclusion $\Irr(\bG_v)\subset \Irr(\bG)$. Since the matrix coefficients of $\Irr(\bG_v)$ generate $\bA_{m}$ as a $\ast$-algebra this shows that the conditions of \cite[Definition 2.1']{WanCMP} are satisfied and hence $\bG$ is a compact quantum group.

\vspace{0.2cm}

\noindent Note that it is at this point not clear that $(\bA_m, \Delta)$ is the underlying universal quantum group of $\bG$ in the sense of \cite{KusUni}. In fact this is true as follows from Theorem \ref{Thm=GraphCoreps} below. We shall also prove that $\bM$ and $\bA$ are the algebras of the underlying von Neumann and reduced C$^\ast$-algebraic quantum group.  In order to distinguish notation we shall -- only in this section -- write $C_m(\bG), C_r(\bG)$ and ${\rm L}^\infty(\bG)$ for the full and reduced C$^\ast$-algebra associated with $\bG$ as well as its von Neumann algebra. Also write $\nu_\bG: C_m(\bG) \rightarrow C_r(\bG)$ for the canonical surjection and $L^2(\bG)$ for the GNS-space of $\bG$.

\begin{dfn}\label{Dfn=ReducedRep}
A unitary representation $u$ of $\bG$ is said to be \textit{reduced} if it is of the form $u=u^{\alpha_1}\ot\ldots\ot u^{\alpha_n}$, where $n\geq 1$, $\boldv=(v_1,\dots,v_n)$ is a reduced word and $\alpha_k\in\Irr(\bG_{v_k})\setminus\{1\}$ for all $1\leq k\leq n$.
\end{dfn}

\noindent Let $\nu_v: \bA_{v,m} \rightarrow \bA_v$ be the canonical surjection. By the universal property of $\bA_m$, we have a unique surjective and unital $\ast$-homomorphism $\nu \,:\,\bA_m\rightarrow\bA$ such that $\nu |_{\bA_v}= \nu_v$.

\begin{thm}\label{Thm=GraphCoreps}
We have,
\begin{enumerate}
\item The Haar state $\omega$ of $\bG$ is given by $\omega=\omega_\Gamma\circ \nu$.
\item All the reduced representations are irreducible and any non-trivial irreducible representation of $\bG$ is unitarily equivalent to a reduced one.
\item We have $C_m(\bG)=\bA_m$, $C_r(\bG)=\bA$, ${\rm L}^{\infty}(\bG)=\bM$ and $\nu = \nu_\bG$.
\end{enumerate}
\end{thm}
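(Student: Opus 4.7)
The plan is to prove (1), (2), (3) in that order; the key step is (1), after which the other parts follow fairly directly from general CQG theory.

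\textbf{Step 1 (Haar state).} Set $\omega := \omega_\Gamma \circ \nu$, a state on $\bA_m$. I would verify that $(\id \otimes \omega) \circ \Delta = \omega(\,\cdot\,)\,1$ on a dense subset of $\bA_m$; by continuity and uniqueness of the Haar state, this forces $\omega$ to be the Haar state (right invariance follows analogously). By the analog of Remark \ref{Rmk=MaxDenseSubalgebra} and density of each $\Pol(\bG_v)$ in $\bA_{v,m}$, it suffices to check invariance on products $a = u^{\alpha_1}_{i_1 j_1} \cdots u^{\alpha_n}_{i_n j_n}$ with $\alpha_k \in \Irr(\bG_{v_k}) \setminus \{1\}$ and $(v_1, \dots, v_n)$ a reduced word (together with $a = 1$, which is trivial). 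Using $\Delta|_{\bA_{v_k, m}} = \Delta_{v_k}$ and $\Delta(u^\alpha_{ij}) = \sum_l u^\alpha_{il} \otimes u^\alpha_{lj}$,
$$(\id \otimes \omega) \Delta(a) = \sum_{l_1, \dots, l_n} \bigl(u^{\alpha_1}_{i_1 l_1} \cdots u^{\alpha_n}_{i_n l_n}\bigr) \, \omega\bigl(u^{\alpha_1}_{l_1 j_1} \cdots u^{\alpha_n}_{l_n j_n}\bigr).$$
Since $\alpha_k \neq 1$, Schur orthogonality yields $\omega_{v_k}(u^{\alpha_k}_{l_k j_k}) = 0$, so $\nu(u^{\alpha_k}_{l_k j_k}) \in \bA_{v_k}^{\circ}$; hence $\nu$ applied to the product on the right is a reduced operator in $\bA_\Gamma$ with associated reduced word $(v_1, \dots, v_n)$, on which $\omega_\Gamma$ vanishes. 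Thus the right-hand side equals $0 = \omega(a) \cdot 1$, as required.

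\textbf{Step 2 (Representations).} A unitary representation $u$ of $\bG$ is irreducible iff its matrix coefficients are linearly independent in $\bA_m$ (the dimension count for $u = \bigoplus_k \gamma_k^{m_k}$ forces $m_k = 1$ and a single $k$). For a reduced rep $u = u^{\alpha_1} \otimes \cdots \otimes u^{\alpha_n}$, compute GNS inner products using (1):
$$\omega\bigl(u_{I'J'}^* u_{IJ}\bigr) = \langle u_{IJ}\,\Omega,\, u_{I'J'}\,\Omega \rangle = \prod_k \bigl\langle \widehat{u^{\alpha_k}_{i_k j_k}},\, \widehat{u^{\alpha_k}_{i'_k j'_k}}\bigr\rangle_{\cH_{v_k}},$$
since both $\nu(u_{IJ})$ and $\nu(u_{I'J'})$ are reduced operators over the same reduced word and inner products in the graph product Hilbert space factorize over the tensor factors. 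The Gram matrix is a tensor product of the non-singular Schur-orthogonality Gram matrices on each $\bG_{v_k}$, hence non-singular, so the $u_{IJ}$ are linearly independent and $u$ is irreducible. For completeness, the $*$-subalgebra $\cA \subset \bA_m$ spanned by $1$ together with matrix coefficients of reduced reps is dense, so $\cA\,\Omega$ is dense in $\cH$; combined with the $\cH$-orthogonal Peter--Weyl decomposition $\Pol(\bG) = \bigoplus_\gamma \mathcal{C}_\gamma$ into finite-dimensional isotypic components, every $\mathcal{C}_\gamma$ must meet $\cA$, so each $\gamma \in \Irr(\bG) \setminus \{1\}$ is equivalent to a reduced representation.

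\textbf{Step 3 (Algebra identifications).} The GNS null space of $\omega$ on $\bA_m$ equals $\ker \nu$ (since $\omega_\Gamma$ is GNS-faithful on $\bA_\Gamma$). Hence the GNS representation of $(\bA_m, \omega)$ factors as $\bA_m \xrightarrow{\nu} \bA_\Gamma = \bA \subset \cB(\cH)$, where the latter inclusion is the defining faithful embedding. This identifies $C_r(\bG) = \bA$ and $\nu = \nu_\bG$; taking double commutants gives $\mathrm{L}^\infty(\bG) = \bM$. For $C_m(\bG) = \bA_m$: any $*$-representation $\pi: \Pol(\bG) \to \cB(K)$ restricts to $*$-reps of each $\Pol(\bG_v)$, which extend to $C_m(\bG_v) = \bA_{v,m}$; the extensions satisfy the commutation relations defining $\bA_m$, so the universal property of $\bA_m$ produces a $*$-representation $\bA_m \to \cB(K)$ extending $\pi$. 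Thus $\bA_m$ is universal for $\Pol(\bG)$.

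\textbf{Main obstacle.} The most delicate point is the completeness half of Step 2: showing every $\gamma \in \Irr(\bG) \setminus \{1\}$ appears as a reduced rep. This uses cyclicity of $\Omega$ for $\cA$ (from the graph product construction), finite-dimensionality and $\cH$-orthogonality of each $\mathcal{C}_\gamma$ (requiring (1)), together with the irreducibility of reduced reps already established by the Gram matrix computation. A minor additional subtlety is that type II equivalent minimal words give unitarily equivalent reduced reps (via the $\cQ$-maps), so the counting is consistent with the Peter--Weyl decomposition.
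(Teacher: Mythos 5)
Your proposal is correct and, for parts (1) and (3), follows essentially the same route as the paper: you verify left invariance of $\omega_\Gamma\circ\nu$ on the dense span of $1$ and the coefficients of reduced representations (which $\nu$ sends to reduced operators, killed by $\omega_\Gamma$), you obtain the completeness half of (2) from density of that span together with Peter--Weyl orthogonality of inequivalent irreducibles and cyclicity of $\Omega$, and you identify $C_r(\bG)$, ${\rm L}^\infty(\bG)$, $\nu_\bG$ via the factorization of the GNS representation through $\nu$ and $C_m(\bG)=\bA_m$ via the two universal properties (the paper builds the surjection $\bA_m\to C_m(\bG)$ from maps $C_m(\bG_v)\to C_m(\bG)$, you instead extend an arbitrary $\ast$-representation of $\Pol(\bG)$ to $\bA_m$; these are two sides of the same coin). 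The one point where you genuinely diverge is the irreducibility of reduced representations: the paper deduces it from $(\omega\otimes\id)(u)=0$, i.e.\ absence of invariant vectors, which implicitly rests on decomposing $u\otimes\overline{u}$ into one trivial summand plus reduced ones, whereas you prove linear independence of the matrix coefficients $u_{IJ}$ by computing their Gram matrix in the graph product Hilbert space as a tensor product of the positive-definite vertex Gram matrices (using faithfulness of each Haar state on $\Pol(\bG_{v})$), and then invoke the standard criterion that a finite-dimensional unitary representation is irreducible iff its coefficients are linearly independent. Your variant is more explicit and avoids the recursive decomposition of $u\otimes\overline{u}$, at the modest cost of the coefficient-independence criterion; both arguments are sound, and the remaining minor imprecision in your Step 3 (``GNS null space equals $\ker\nu$'' should really be ``kernel of the GNS representation equals $\ker\nu$'', which holds since $\omega_\Gamma$ is (GNS-)faithful on $\bA_\Gamma$) does not affect the conclusion.
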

\begin{proof}
$(1)$. Let $\mathcal{P}\subset\bA_m$ be the linear span of the coefficients of the reduced representations (so $1 \not \in \mathcal{P}$). Since $\mathcal{P}_m$ equals the linear span of the reduced operators $a\in\bA_m$ relative to the family of states $(\omega_v)_{v\in V\Gamma}$ (see Remark \ref{Rmk=MaxReducedOp}) and of the form $a=a_1\dots a_n$, with $a_k\in{\rm Pol}( \bG_{v_k})$ it follows that the linear span of $1$ and $\mathcal{P}$ is dense in $\bA_m$. Hence, it suffices to show the invariance of $\omega$ on $\mathcal{P}$. Since $\Delta(\mathcal{P})\subset\mathcal{P}\odot\mathcal{P}$ and $\nu(\mathcal{P})$ is again contained in $\mathcal{P}$ (viewed within $\bA_\Gamma$) of the reduced operators in $\bA_\Gamma$ we have $(\id\ot \omega)\Delta(\mathcal{P})\subset(\id \ot \omega)( \mathcal{P} \odot \mathcal{P})=\{0\}$. In the same way we find $(\omega \ot\id)\Delta(\mathcal{P})=\{0\}$. Hence, for all $a\in\mathcal{P}$, one has $(\id\ot \omega)\Delta(a)=(\omega\ot\id)\Delta(a)=0=\omega(a)$.

\vspace{0.2cm}

\noindent$(2)$. Firstly, let $u^{\alpha_1} \otimes \ldots \otimes u^{\alpha_n} \in \bM_\Gamma \otimes M_{n_{\alpha_1}} \otimes \ldots \otimes M_{n_{\alpha_n}}$ (where $n_{\alpha_i}$ is the dimension of $u^{\alpha_i}$) be a reduced representation as in Definition \ref{Dfn=ReducedRep}. To conclude the first part of the statement, we shall show that the set of elements $(\omega \otimes \iota)(u^{\alpha_1} \otimes \ldots \otimes u^{\alpha_n})$ with $\omega \in (\bM_\Gamma)_\ast$ equals the complete matrix algebra $M_{n_{\alpha_1}} \otimes \ldots \otimes M_{n_{\alpha_n}}$. Consider a tensor product of matrix units $E_{i_1,j_1}^{\alpha_1} \otimes \ldots \otimes E_{i_n,j_n}^{\alpha_n}$ in the latter algebra. We have $(\omega_{v_k}( (u^{\alpha_k}_{i_k, j_k})^\ast \: \cdot \: ) \otimes \iota)(u^{\alpha_k}) = \lambda_{i_k, j_k}^{\alpha_k} E_{i_k, j_k}^{(\alpha_k)}$ for some constant $\lambda_{i_k, j_k}^{\alpha_k} \in \mathbb{C}$, see \cite[Proposition 2.1]{DawsPAMS}, \cite{WorQG}.  And therefore using that $\omega_{\Gamma}$ is the vector state of the vacuum vector we see that   $(\omega_{\Gamma}( (u^{\alpha_1}_{i_1, j_1} \ldots u^{\alpha_n}_{i_n, j_n} )^\ast \: \cdot \: ) \otimes \iota)(u^{\alpha_1} \otimes \ldots \otimes u^{\alpha_n}) =
\prod_k (\omega_{v_k}( (u^{\alpha_k}_{i_k, j_k})^\ast \: \cdot \: ) \otimes \iota)(u^{\alpha_k}) =
 \lambda_{i_1, j_1}^{\alpha_1} \ldots \lambda_{i_n, j_n}^{\alpha_n}  E_{i_1, j_1}^{(\alpha_1)} \otimes \ldots \otimes E_{i_n, j_n}^{(\alpha_n)}$.

\noindent Moreover, since the linear span of $\mathcal{P}$ and $1$ is dense in $\bA_m$, every non trivial irreducible representation is equivalent to a reduced one.

\vspace{0.2cm}

\noindent $(3)$. Since $\nu$ is surjective and $\omega_\Gamma$ is faithful on $\bA$, it follows from $(1)$ that $\bA=C_r(\bG)$, $\cH={\rm L}^2(\bG)$ and $\bM={\rm L}^{\infty}(\bG)$. It follows from $(2)$ that ${\rm Pol}(\bG)$ is the linear span of $\mathcal{P}$ and $1$. Hence, $C_m(\bG)$ is generated, as a C*-algebra, by $\bigcup_{v\in V\Gamma}{\rm Pol}(\bG_v)$ and the relations $a_va_{v'}=a_{v'}a_v$ are satisfied in $C_m(\bG)$, for all $a_v\in{\rm Pol}(\bG_v)$, $a_{v'}\in{\rm Pol}(\bG_{v'})$ and all $v,v'\in V\Gamma$ such that $(v,v')\in E\Gamma$. From the inclusions ${\rm Pol}(\bG_v)\subset C_m(\bG)$ and the universal property of $C_m(\bG_v)$ we have, for all $v\in V\Gamma$, a unital $\ast$-homomorphism $\pi_v\,:\,C_m(\bG_v)\rightarrow C_m(\bG)$ which is the identity on ${\rm Pol}(\bG_v)$. The morphisms $\pi_v$ are such that $\pi_v(a_v)\pi_{v'}(a_{v'})=\pi_{v'}(a_{v'})\pi_v(a_v)$ for all $a_v\in{\rm Pol}(\bG_v)$, $a_{v'}\in{\rm Pol}(\bG_{v'})$ and all $v,v'\in V\Gamma$ and $C_m(\bG)$ is generated by $\bigcup_{v\in V\Gamma}\pi_v(C_m(\bG_v)$. By universal property of  $\bA_m$, we have a surjective unital $*$-homomorphism from $\bA_m$ to $C_m(\bG)$ which is the identity on ${\rm Pol}(\bG)$. Hence, $\bA_m=C_m(\bG)$. That $\nu = \nu_\bG$ follows then since these maps are $\ast$-homomorphisms that agree on $\Pol(\bG)$.
\end{proof}

%%%%%%%%%%%%%%%%%%%%%%%%%%%%%%%%%%%%%%%%%%%%
\subsection{Haagerup property of discrete quantum groups}\label{Sect=HP}
%%%%%%%%%%%%%%%%%%%%%%%%%%%%%%%%%%%%%%%%%%%%

We show that the Haagerup property of discrete quantum groups is preserved by the graph product. In case the quantum group is of Kac type this follows from Corollary \ref{Cor=HAPFiniteKac} and \cite[Theorem 6.7]{DFSW}. Since it is unknown if the correspondence in \cite[Theorem 6.7]{DFSW} holds beyond Kac type quantum groups the general case requires a proof. The special case of free products was proved in \cite[Theorem 7.8]{DFSW} the special case of Cartesian products of quantum groups can be found in \cite[Proposition 3.4]{FreStability}. %We end this section by giving an alternative proof of stability of the Haagerup property in the Kac case by explicitly constructing a cocycle. For groups a (slightly different) cocycle was constructed in   \cite{AntDre}.

%%%%%%%%%%%%%%%%%%%%%%%%%%%%%%%%%%%%%%%%%%%%
\subsubsection{General discrete quantum groups} Firstly recall the following equivalent definition of the Haagerup property for discrete quantum groups, see \cite[Proposition 6.2 and Lemma 6.24]{DFSW}.
%%%%%%%%%%%%%%%%%%%%%%%%%%%%%%%%%%%%%%%%%%%%

\begin{prop}\label{Prop=HAPDiscrete}
A discrete quantum group $\widehat{\bG}$ has the Haagerup property if and only if there is a sequence of states $(\mu_k)_{k \in \mathbb{N}}$ on $\Pol(\bG)$ such that:
\begin{enumerate}
\item\label{Item=HAPDiscreteI} For each $k \in \mathbb{N}$ we have $((\mathcal{F} \mu_k)^\alpha)_{\alpha \in \Irr(\widehat{\bG} ) } \in \prod_{\alpha \in \Irr(\widehat{\bG} ) } M_{n_\alpha}$ is actually in $\oplus_{\alpha \in \Irr(\bG)} M_{n_\alpha}$.
\item\label{Item=HAPDiscreteII} For each $\alpha \in \Irr(\widehat{\bG})$ the net $((\mathcal{F} \mu_k)^\alpha)_{k \in \mathbb{N}}$ converges to the identity matrix.
\end{enumerate}
If these conditions hold, then we may moreover impose the following conditions on the net $(\mu_k)_{k \in \mathbb{N}}$,
\begin{enumerate}\setcounter{enumi}{2}
\item\label{Item=HAPDiscreteIII} For each $k \in \mathbb{N}$ and $\alpha \in \Irr(\widehat{\bG})$ with $\alpha \not = 1$ we have that $\Vert (\mathcal{F} \mu_k)^\alpha \Vert \leq \exp(-\frac{1}{k})$.
\end{enumerate}
\end{prop}

\noindent Recall the following definition from \cite{BozLeiSpe}.

\begin{dfn}
Let $\cA$ be a unital $\ast$-algebra over $\mathbb{C}$. A linear map $\omega: \cA \rightarrow \mathbb{C}$ is called a state if $\omega(1) = 1$, $\omega(a^\ast) = \overline{\omega(a)}$ and $\omega(a^\ast a) \geq 0$ for every  $a \in A$.
\end{dfn}

\noindent Let $\cA_v, v \in V\Gamma$ be unital $\ast$-algebras, each equipped with a state $\varphi_v$. Let $\cA$ be its algebraic graph product which is defined as the unital $\ast$-algebra freely generated by $\cA_v, v \in V\Gamma$ subject to the relation that $a_v a_w = a_w a_v$ for any $a_v \in \cA_v, a_w \in \cA_w$ such that $(v,w) \in E\Gamma$ (and the units of each $\cA_v$ are identified). Using the decomposition $\cA_v = \mathbb{C} 1 \oplus \cA_v^\circ$ with $\cA_v = \ker \varphi_v$ we may identify $\cA$ with the vector space $\mathbb{C} 1 \oplus \bigoplus_{v_1 \ldots v_n \in \cWmin} \cA_{v_1}^\circ \otimes \cA_{v_2}^\circ  \otimes \ldots \otimes \cA_{v_n}^\circ$. Suppose that there exists a state $\psi_v$ on each $\cA_v, v \in V\Gamma$, then the algebraic graph product functional $\psi = \diamond_{v \in V\Gamma} (\psi_v, \varphi_v)$ on $\cA$ is defined as $\psi( a_1\ldots a_n ) = \psi_{v_1}(a_1)\ldots\psi_{v_n}(a_n)$ whenever $a_i \in A_{v_i}^\circ$ with $v_1 \ldots v_n \in \cWmin$.

\vspace{0.2cm}

\noindent Now let again $\bG_v, v \in V\Gamma$ be a compact quantum group and $\bG$ be its graph product.  The proof of the following theorem is similar to \cite[Theorem 7.8]{DFSW}.

\begin{thm}
The discrete quantum group $\widehat{\bG}$ has the Haagerup property if and only if for every $v \in V \Gamma$ we have that $\widehat{\bG}_v$ has the Haagerup property.
\end{thm}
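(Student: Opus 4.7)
The plan is to use Proposition \ref{Prop=HAPDiscrete} as the working characterization of the Haagerup property. For the forward direction, given a sequence $(\mu_k)$ of states on $\Pol(\bG)$ satisfying (1) and (2), I simply restrict each $\mu_k$ to $\Pol(\bG_v) \subset \Pol(\bG)$. Theorem \ref{Thm=GraphCoreps} identifies $\Irr(\bG_v)$ with $\{1\}$ together with the length-one reduced representations of $\bG$, so for every $\alpha \in \Irr(\bG_v)$ the matrix $u^\alpha$ already lies in $\Pol(\bG_v) \ot M_{n_\alpha}$, giving $(\mathcal{F}(\mu_k|_{\Pol(\bG_v)}))^\alpha = (\mathcal{F}\mu_k)^\alpha$; both finite support and convergence to the identity on each irreducible are inherited, proving HAP of each $\widehat{\bG}_v$.

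For the backward direction, for each $v$ I choose a sequence of states $\mu_k^v$ on $\Pol(\bG_v)$ satisfying (1), (2), (3) with the supplementary bound $\Vert(\mathcal{F}\mu_k^v)^\alpha\Vert \leq e^{-1/k}$ for $\alpha \neq 1$. Since $\mathcal{F}\mu_k^v$ has finite support, $\mu_k^v$ extends to a bounded state on $\bA_{v,m} = C_m(\bG_v)$, and I take its GNS triple $(\cK_v^k, \xi_v^k, \pi_v^k)$. I form the graph product Hilbert space $(\cK^k, \Xi^k)$ of the pointed Hilbert spaces $(\cK_v^k, \xi_v^k)_{v \in V\Gamma}$ as in Section \ref{Section=GraphProductHilbert}. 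Proposition \ref{Prop=commutation} ensures that the composite representations $\lambda_v \circ \pi_v^k: \bA_{v,m} \to \cB(\cK^k)$ have commuting images across edges, so the universal property of $\bA_m = C_m(\bG)$ extends them to a $\ast$-representation $\Pi^k: \bA_m \to \cB(\cK^k)$; the candidate state is $\mu_k(a) := \langle \Pi^k(a)\Xi^k, \Xi^k\rangle$ restricted to $\Pol(\bG)$, which is automatically positive as a vector state.

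Condition (2) is verified by computing $\mu_k$ on matrix coefficients of a reduced irreducible $\alpha = u^{\alpha_1} \ot \ldots \ot u^{\alpha_n}$ using Proposition \ref{Prop=commutation}: expanding each $u^{\alpha_i}_{kl}$ as its $\mu_k^{v_i}$-centered part plus the scalar $(\mathcal{F}\mu_k^{v_i})^{\alpha_i}_{kl}$, the centered terms produce reduced operators in the $\mu_k^{v_i}$-sense on which $\mu_k$ vanishes, leaving $(\mathcal{F}\mu_k)^\alpha$ expressible as a product (essentially a tensor product) of the matrices $(\mathcal{F}\mu_k^{v_i})^{\alpha_i}$, which converges to the identity as $k \to \infty$. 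The main obstacle is condition (1): even when each $\mathcal{F}\mu_k^v$ has finite support, the graph product produces nonzero Fourier blocks on infinitely many reduced irreducibles once $\Gamma$ admits arbitrarily long reduced words. To handle this I would use the bound $\Vert(\mathcal{F}\mu_k)^\alpha\Vert \leq e^{-n/k}$ on length-$n$ reduced irreducibles (a direct consequence of (3)) either to truncate $\mu_k$ to a finite support with negligible error, or to invoke the equivalent $c_0$-characterization of HAP from \cite{DFSW}, under which this decay is already sufficient. For infinite $\Gamma$ I would reduce to finite subgraphs via the inductive limit argument used in the proof of Corollary \ref{Cor=HAPFiniteKac}, exploiting that subgraph inclusions are compatible with the graph product construction for quantum groups.
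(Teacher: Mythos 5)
Your proposal is correct and essentially reproduces the paper's proof: the nontrivial direction is handled exactly as in the paper by extending the vertex states to $\bA_{v,m}$, representing them on the graph product of their GNS spaces, invoking the universal property of $\bA_m$ to obtain a vector state which is the graph product functional, computing its Fourier blocks as tensor products of the vertex blocks, and using the bound $\Vert(\mathcal{F}\mu_k)^{\alpha_1\ot\cdots\ot\alpha_n}\Vert\leq e^{-n/k}$ together with finiteness of $\Gamma$ (the paper reduces to finite $\Gamma$ via \cite[Proposition 3.7]{FreStability}, which plays the role of your inductive limit step) to verify Proposition \ref{Prop=HAPDiscrete}. The only cosmetic difference is that no truncation of $\mu_k$ is needed for condition \eqref{Item=HAPDiscreteI}: that condition is precisely membership in the $c_0$-sum $\oplus_\alpha M_{n_\alpha}=\widehat{\bA}$, so the decay estimate (your second alternative) is already the paper's argument.
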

\begin{proof}
By a standard inductive limit argument it suffices to prove the theorem under the condition that the graph $\Gamma$ is finite. Firstly, suppose that for every $v \in V \Gamma$ the quantum group $\widehat{\bG}_v$ has the Haagerup property. By Proposition \ref{Prop=HAPDiscrete} there exists a sequence $(\mu_{v,k})_{k \in \mathbb{N}}$ of states on $\Pol(\bG_v)$ satisfying \eqref{Item=HAPDiscreteI} - \eqref{Item=HAPDiscreteIII} of this proposition. Recall that $\omega_v$ is the Haar state of $\bG_v$. Let $\mu_k = \diamond_{v \in V\Gamma} (\mu_{v,k}, \omega_v)$ denote the graph product functional as defined in the paragraph before this theorem.

\vspace{0.2cm}

\noindent We claim that $\mu_{k}, k \in \mathbb{N}$ is again a state. This follows from the following standard argument. For convenience of notation fix  $k\in \mathbb{N}$.  From the state $\mu_{v,k}$ on $\Pol(\bG_v)$ we may follow the usual GNS-construction to find a pre-Hilbert space $\cH_{v,0}$ with cyclic unit vector $\xi_v$ and representation $\pi_v$ such that $\mu_{v,k}(x) = \langle \pi_v(x) \xi_v, \xi_v \rangle$. Let $\bA_{v,m}$ be the maximal C$^\ast$-algebra associated with the quantum group $\bG_v$. As in \cite[Lemma 4.2]{DijkKoo} the map $\pi_v$ extends to a $\ast$-homomorphism $\bA_{v,m} \rightarrow B(\cH_v)$ with $\cH_v$ the completion of $\cH_{v,0}$. Let $\bB$ be the reduced graph product C$^\ast$-algebra of $\pi_v( \bA_{v,m} ), v \in V \Gamma$ and let $\xi$ denotes its cyclic vacu\"um vector. Since $\pi_v( \bA_{v,m} )$ is included into $\bB$ naturally we may regard $\pi_v$ as a $\ast$-homomorphism $\bA_{v,m} \rightarrow \bB$. The universal property of the maximal graph product C$^\ast$-algebra $\bA_m$ then yields a $\ast$-homomorphism $\pi: \bA_m \rightarrow \bB$. Let $\mu_k$ be the state on $\bA_m$ defined by $\mu_k(x) = \langle \pi(x) \xi, \xi \rangle$ and denote by $\mu_k$ the restriction to $\Pol(\bG)$. It follows from Theorem \ref{Thm=GraphCoreps} that indeed $\Pol(\bG)$ is the algebraic graph product of $\Pol(\bG_v), v \in V\Gamma$ and by construction it follows that $\mu_k$ is the graph product of the states $\mu_{k,v}, v \in V\Gamma$. In particular $\mu_k$ is again a state.

\vspace{0.2cm}

\noindent Let $\alpha_j, 1 \leq j \leq l$ be elements of $\Irr(\bG_{v_j})$ with $v_j$ such that $v_1 v_2\ldots v_l \in \cWmin$. By definition of the graph product and the graph product representation $\alpha_1 \otimes \ldots  \otimes \alpha_n$, see Theorem \ref{Thm=GraphCoreps}, we see that,
\[
(\mathcal{F} \mu_k)^{\alpha_1 \otimes \ldots \otimes  \alpha_l} = \otimes_{j = 1}^l (\mathcal{F} \mu_{v_j,k})^{\alpha_j}.
\]
It is then straightforward to verify conditions \eqref{Item=HAPDiscreteI}  and \eqref{Item=HAPDiscreteII} of Proposition \ref{Prop=HAPDiscrete}. For condition \eqref{Item=HAPDiscreteI} one uses that $\Gamma$ is finite and that $\Vert (\mathcal{F} \mu_{k} )^{\alpha_1\otimes \ldots \otimes \alpha_l} \Vert \leq \exp(-\frac{l}{k})$.
\end{proof}

%%%%%%%%%%%%%%%%%%%%%%%%%%%%%%%%%%%%%%%%%%%%
\section{Rapid Decay}\label{Sect=RD}
%%%%%%%%%%%%%%%%%%%%%%%%%%%%%%%%%%%%%%%%%%%%

\noindent We prove that the property of Rapid Decay (RD) for discrete quantum groups is preserved by taking graph products of finite graphs under suitable conditions on the vertex quantum groups. In particular our result holds if every vertex quantum group is either a classical group or a quantum group with polynomial growth. This generalizes the result of \cite{CioHolRee} which proves the corresponding result for discrete groups.

\subsection{Preliminaries on elements affiliated with a C$^\ast$-algebra} For unbounded operators affiliated with a C$^\ast$-algebra we refer to \cite{WorAffiliated}. When $\widehat{\bA}$ is the C$^\ast$-algebra of a discrete quantum group $\widehat{\bG}$ the notion of affiliated elements simplifies. In that case, the $\ast$-algebra $\widehat{\bA}^\eta$ of affiliated elements with $\widehat{\bA}$ can be identified with the algebraic product $\prod_{\alpha \in \Irr(\bG)} M_{n_\alpha}$ and, for each operator in $\widehat{\bA}^\eta$, the vector space $\cHPol$ (the space of matrix coefficients of finite dimensional representations of $\bG$ identified as subspace of $\cH$) forms a core. For $L \in \widehat{\bA}^\eta$ we will write $\prod_{\alpha \in \Irr(\bG)} L^{(\alpha)}$ for this representation.  Any $\ast$-homomorphism of $\widehat{\bA}$ extends naturally to $\widehat{\bA}^\eta$ through spectral calculus. In particular this applies to the counit $\widehat{\epsilon}$ and comultiplication $\widehat{\Delta}$  as well as the antipode $\hat{\kappa}$ of a Kac type discrete quantum group.

\subsection{Definition of Rapid Decay}

 Let $\bG = (\bA, \Delta)$ be a compact quantum group with discrete dual $\widehat{\bG} = (\widehat{\bA}, \widehat{\Delta})$. Then $\widehat{\bA} = \oplus_{\alpha \in \Irr(\bG)} M_{n_\alpha}$ where $n_\alpha$ is the dimension of $\alpha$.
In case $\widehat{\bG}$ is of Kac type its Haar weight $\widehat{\omega}$   is given by $\widehat{\omega} = \oplus_{\alpha \in \Irr(\bG)} n_\alpha {\rm Tr}_{M_{n_\alpha}}$ where ${\rm Tr}_{M_{n_\alpha}}$ is the normalized trace on $M_{n_\alpha}$.
For every $\alpha \in \Irr(\bG)$ let $u^\alpha \in \bA \otimes M_{n_\alpha}$ be a representation belonging to the equivalence class $\alpha$.   The Fourier transform $\mathcal{F}$ of $x = \oplus_{\alpha \in \Irr(\bG)} x_\alpha  \in \widehat{\bA}$ with finite direct sum, is defined as the element,
\[
\sum_{\alpha \in \Irr(\mathbb{G})} (\id \otimes \widehat{\omega} ) (u^\alpha (1 \otimes x_\alpha) ).
\]

\begin{dfn}[Lengths and central lengths] A length on $\widehat{\bG}$ is an (unbounded) operator affiliated with $\widehat{\bA}$ that satisfies the following properties: $L \geq 0$, $\widehat{\epsilon}(L) = 0$, $\widehat{\kappa}(L)\vert_{\cHPol} = L\vert_{\cHPol}$ and $\widehat{\Delta}(L) \leq 1 \otimes L + L \otimes 1$. Given such a length, we denote by $q_n \in \mathcal{M}(\widehat{\bA})$ (the multiplier algebra of $\widehat{\bA}$) the spectral projection of $L$ associated to the interval $[n, n+1)$, $n \in \mathbb{N}$. $L$ is called central if each of its spectral projections are central in $\mathcal{M}(\widehat{\bA})$.
\end{dfn}

\begin{dfn}
Let $L$ be a central length on the discrete quantum group $\widehat{\bG} = (\widehat{\bA}, \widehat{\Delta})$. We say that $(\widehat{\bG}, L)$ has the property of Rapid Decay (RD) if the following condition is satisfied: there exists a polynomial $P \in \mathbb{R}[X]$ such that, for every $k \in \mathbb{N}$, $a \in q_k \widehat{\bA}$ and every $m,l \in \mathbb{N}$, we have $\Vert q_m \mathcal{F}(a) q_l \Vert \leq P(k) \Vert a \Vert_2$.
\end{dfn}

\noindent In fact there are other equivalent formulations of (RD), see \cite[Proposition and Definition 3.5]{VerRd} or \cite{JolRD} for the group case.

\subsection{Permanence properties of (RD)} We prove permanence properties of (RD) under graph products. In particular we prove that (RD) is preserved by free products. We collect some standard and well known observations in the next lemma. We include a proof for the convenience of the reader.

\begin{lem} \label{Lem=KappComputation}
Let $\bG$ be a compact quantum group of Kac type with discrete dual quantum group $\widehat{\bG} = (\widehat{\bA}, \widehat{\Delta})$. Let $\{ u^{\alpha} \mid \alpha \in \Irr(\bG) \}$ be a complete set of representatives of irreducible representations and let $u^\alpha_{i,j} = ( \id  \otimes \omega_{e_i, e_j} )(u^\alpha)$ denote its matrix coefficients with respect to some orthonormal basis $e_i$ of the representation space $\cH_\alpha$ for which,
\begin{equation}\label{Eqn=Ortho}
\omega( (u^\alpha_{i,j})^\ast u^\alpha_{k,l}) = \delta_{i,k} \delta_{j,l} n_\alpha^{-1},
 \end{equation}
 (see \cite[Proposition 2.1]{DawsPAMS}, \cite{WorQG}). The contragredient representation $\overline{\alpha}$ is given by  $u^{\overline{\alpha}}_{i,j} = u^\alpha_{j,i}$ (and this definition is consistent with \eqref{Eqn=Ortho}). Let $E_{i,j}^\alpha \in \widehat{\bA}$ be the matrix with entry 1 on the $i$-th row and $j$-th column of the matrix block indexed by $\alpha \in \Irr(\bG)$ and zeros elsewhere. Then $\widehat{\kappa}(E_{i,j}^\alpha ) = E_{j,i}^{\overline{\alpha}}$.
\end{lem}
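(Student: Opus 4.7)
The plan is to derive the identity from the characterizing relation $(\kappa \otimes \widehat{\kappa})(W) = W$ for the left multiplicative unitary, using its explicit expansion
\[
W \;=\; \sum_{\alpha \in \Irr(\bG)} \; \sum_{i,j=1}^{n_\alpha} u^\alpha_{i,j} \otimes E^\alpha_{i,j},
\]
understood strictly in the appropriate multiplier algebra.

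First I would apply $\kappa \otimes \widehat{\kappa}$ termwise and invoke the Kac-type identity $\kappa(u^\alpha_{i,j}) = (u^\alpha_{j,i})^\ast$, which follows immediately from $(\kappa \otimes \id)(u^\alpha) = (u^\alpha)^{\ast}$ for a unitary corepresentation. This turns the characterizing equality into
\[
\sum_{\alpha, i, j} (u^\alpha_{j,i})^\ast \otimes \widehat{\kappa}(E^\alpha_{i,j}) \;=\; \sum_{\beta, k, l} u^\beta_{k,l} \otimes E^\beta_{k,l}.
\]

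Next I would use the convention of the lemma to reinterpret the first tensor factor on the left as a matrix coefficient of the contragredient $\overline{\alpha}$, and would reindex the right-hand side using the involution $\alpha \leftrightarrow \overline{\alpha}$ on $\Irr(\bG)$ so that both sides range over the same family $\{u^{\overline{\alpha}}_{?,?}\}$ of matrix coefficients. The transposition $(i,j) \mapsto (j,i)$ that appears in the conclusion will emerge at this step: one side carries the pair $(j,i)$ in the first tensor leg paired with $\widehat{\kappa}(E^\alpha_{i,j})$ in the second, while the other carries $(k,l)$ paired with $E^{\overline{\alpha}}_{k,l}$.

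Finally, linear independence of the family $\{u^\beta_{k,l}\}_{\beta, k, l}$, a standard consequence of the Woronowicz orthogonality relations \eqref{Eqn=Ortho}, allows the matching of coefficients on both sides and produces $\widehat{\kappa}(E^\alpha_{i,j}) = E^{\overline{\alpha}}_{j,i}$. The main obstacle is the careful tracking of indices through the contragredient convention; once this bookkeeping is in place, the remainder is linear algebra combined with Peter--Weyl-type orthogonality for compact quantum groups.
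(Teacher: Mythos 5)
Your proposal is correct and follows essentially the same route as the paper: both arguments rest on the identity $(\kappa\otimes\widehat{\kappa})(W)=W$, the Kac-type formula $\kappa(u^\alpha_{i,j})=(u^\alpha_{j,i})^\ast$, and the Peter--Weyl orthogonality relations used to identify the resulting matrix units. The paper merely organizes your coefficient-matching step by slicing $W$ with the functionals $n_\alpha\,\omega\bigl((u^\alpha_{i,j})^\ast\,\cdot\,\bigr)$ and computing their composition with $\kappa$ (via traciality and $\omega\circ\kappa=\omega$), which is precisely how the appeal to linear independence of the $u^\beta_{k,l}$ in an infinite sum is made rigorous.
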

\begin{proof}
The proof is a consequence of the relation $\kappa(u_{i,j}^\alpha) = (u_{j,i}^\alpha)^\ast$ and using duality between $\bG$ and $\widehat{\bG}$. So let $\omega_{i,j}^\alpha(\: \cdot \:) = n_\alpha \: \omega( (u_{i,j}^\alpha)^\ast \: \cdot \: )$ so that by orthogonality (see \cite[p. 1351]{DawsPAMS}, \cite{WorQG}) we have, \begin{equation}\label{Eqn=FTOMega}
(\omega_{i,j}^\alpha \otimes \id)(W) = E_{i,j}^\alpha,
\end{equation}
where $W = \oplus_{\alpha \in \Irr(\bG)} u^\alpha$.
 Then we have using that for Kac algebras $\kappa^2 = \id$, $\kappa$ is an anti-homomorphism, $\omega \circ \kappa = \omega$, traciality of the Haar state $\omega$ and the relation $\kappa(u_{i,j}^\alpha) = (u_{j,i}^\alpha)^\ast$, see \cite{Tim},
\[
\begin{split}
&\omega_{i,j}^\alpha \circ \kappa =  \omega( \kappa^2((u_{i,j}^\alpha)^\ast) \kappa(\: \cdot \:) )
= \omega( \kappa( \: \cdot \:\:  \kappa((u_{i,j}^\alpha)^\ast)) ) = \omega(  \: \cdot \:\:  \kappa((u_{i,j}^\alpha)^\ast) ) =
\omega(  \kappa((u_{i,j}^\alpha)^\ast) \: \: \cdot \:  ) \\
= &  \omega( \kappa(u_{i,j}^\alpha)^\ast \:\: \cdot \: ) =  \omega( u_{j,i}^\alpha \: \: \cdot \: )= \omega( (u_{j,i}^{\overline{\alpha}})^\ast \: \cdot \: ),
\end{split}
\]
so that $(\omega_{i,j}^\alpha \circ \kappa \otimes \id)(W) = E_{j,i}^{\overline{\alpha}}$ by \eqref{Eqn=FTOMega}. This means that using the relation $(\kappa \otimes \hat{\kappa})(W) = W$ \cite{KusVaeVNA},
\[
\widehat{\kappa}(E^\alpha_{i,j}) = \hat{\kappa} (\omega_{i,j}^\alpha \otimes \id)(W) = (\omega_{i,j}^\alpha \circ \kappa \otimes \id)(W) = E_{j,i}^{\overline{\alpha}}.
\]
\end{proof}

\noindent Now let us return to graph products. So let $\Gamma$ be again a simplicial graph and for each $v \in V\Gamma$ let $\bG_v$ be a compact quantum group with discrete dual $\widehat{\bG}_v$. Let $\bG$ be the graph product of $\bG_v, v \in V \Gamma$ and let $\widehat{\bG}$ be its discrete dual. From Theorem \ref{Thm=GraphCoreps} we see that the C$^\ast$-algebra $\widehat{A}$ associated to $\widehat{\bG}$ equals
\[
\oplus_{\alpha \in \Irr(\bG)}  M_{n_{\alpha_1}} \otimes \ldots \otimes M_{n_{\alpha_l}},
\]
in case $\alpha = \alpha_1 \otimes \ldots \otimes \alpha_l$. For $k \in \mathbb{N}$ we shall use the notation $\bA_{(k)}$ for the subspace defined by
\[
\oplus_{\alpha \in \Irr(\bG), \alpha = \alpha_1 \otimes \ldots \otimes \alpha_k}  M_{n_{\alpha_1}} \otimes \ldots \otimes M_{n_{\alpha_k}},
\] so the subspace of exactly $k$-fold tensor products of matrices.
Let $w \in V \Gamma$. We shall denote $P_w: \cH \rightarrow \cH$ for the projection onto the linear span of the Hilbert spaces $\cH_{\boldv}$ with $\boldv \in \cWmin$ a word that is equivalent to a word that starts with $w$.
Now we are able to state the following Lemma \ref{Lem=GraphProductLength}.

\begin{lem}\label{Lem=GraphProductLength}
For $v \in V \Gamma$ suppose that $L_v = \oplus_{\alpha \in \Irr(\bG)} L_v^{(\alpha)}$ is a central length for the discrete quantum group $\widehat{\bG}_v$. Define,
\begin{equation}\label{Eqn=GraphProductLength}
L = \prod_{\alpha \in \Irr(\bG) } \sum_{i=1}^{l(\alpha)} 1_{M_{n_{\alpha_1}}} \otimes \ldots \otimes 1_{M_{n_{\alpha_{i-1}}}} \otimes L_{v_i}^{(\alpha_i)} \otimes  1_{M_{n_{\alpha_{i+1}}}} \otimes \ldots \otimes 1_{M_{n_{\alpha_{l(\alpha)}}}},
\end{equation}
where each $\alpha \in \Irr(\bG)$ decomposes as the tensor product representation $\alpha_1 \otimes \ldots \otimes \alpha_{l(\alpha)}$ and $\alpha_i \in \Irr(\bG_{v_i})$. Then $L$ is a central length function for the discrete quantum group $\widehat{\bG}$.
\end{lem}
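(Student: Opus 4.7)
The plan is to verify, in turn, positivity, vanishing at the counit, centrality, antipode invariance on $\cHPol$, and the subadditivity $\widehat{\Delta}(L) \leq 1 \otimes L + L \otimes 1$. Positivity is immediate from \eqref{Eqn=GraphProductLength} since each summand is a positive operator affiliated with $\widehat{\bA}$. Vanishing at the counit holds because the trivial representation of $\bG$ corresponds to $l(\alpha)=0$, for which the defining sum is empty. Centrality of each $L_v$ in $\mathcal{M}(\widehat{\bA}_v)$ forces every $L_{v_i}^{(\alpha_i)}$ to be a scalar $L_{v_i}(\alpha_i)\cdot 1$, hence $L^{(\alpha)} = L(\alpha)\cdot 1_{M_{n_\alpha}}$ with $L(\alpha) := \sum_{i=1}^{l(\alpha)} L_{v_i}(\alpha_i)$, which makes all spectral projections of $L$ sums of block identities and hence central in $\mathcal{M}(\widehat{\bA})$. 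For antipode invariance, note that if $\alpha = \alpha_1 \otimes \ldots \otimes \alpha_{l(\alpha)}$ is reduced then $\overline{\alpha} = \overline{\alpha_{l(\alpha)}} \otimes \ldots \otimes \overline{\alpha_1}$ is again reduced, and Lemma \ref{Lem=KappComputation} applied to each tensor factor combined with the $\widehat{\kappa}_v$-invariance of $L_v$ gives $L(\overline{\alpha}) = L(\alpha)$, yielding $\widehat{\kappa}(L) = L$ on $\cHPol$.

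The remaining step is subadditivity, which by centrality reduces to the scalar inequality $L(\gamma) \leq L(\alpha) + L(\beta)$ for every $\alpha, \beta, \gamma \in \Irr(\bG)$ with $\gamma$ occurring in $\alpha \otimes \beta$. To prove this I would introduce, for every vertex $v$ and every (not necessarily irreducible) representation $\rho$ of $\bG_v$, the quantity $L_v^+(\rho) := \max\{L_v(\eta) : \eta \subset \rho \text{ irreducible in } \bG_v\}$, and establish by induction on $n$ the following claim: for any representations $\rho_i$ of $\bG_{v_i}$ (with the word $v_1 v_2\ldots v_n$ not required to be reduced) and any irreducible $\gamma \subset \rho_1 \otimes \ldots \otimes \rho_n$ in $\bG$, one has $L(\gamma) \leq \sum_{i=1}^n L_{v_i}^+(\rho_i)$. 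Specializing to reduced $\alpha = \alpha_1 \otimes \ldots \otimes \alpha_l$ and $\beta = \beta_1 \otimes \ldots \otimes \beta_m$ then yields the desired $L(\gamma) \leq L(\alpha) + L(\beta)$, since $L_v^+(\eta) = L_v(\eta)$ for irreducible $\eta$.

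For the induction step: if the word $v_1 \ldots v_n$ is reduced, Theorem \ref{Thm=GraphCoreps} implies that every irreducible $\gamma \subset \rho_1 \otimes \ldots \otimes \rho_n$ is a tensor product of non-trivial irreducible subrepresentations of the $\rho_i$ along a subword, and if some factor is trivial one invokes the induction hypothesis on the shorter product; otherwise $L(\gamma) = \sum L_{v_i}(\eta_i) \leq \sum L_{v_i}^+(\rho_i)$. If the word is not reduced, Lemma \ref{Lem=ReducedRep} produces indices $k<l$ with $v_k = v_l =: v$ and $v_{k+1}, \ldots, v_{l-1} \in \Link(v)$, so commuting $\rho_l$ past the intermediate factors and merging it with $\rho_k$ identifies $\rho_1 \otimes \ldots \otimes \rho_n$ with an $(n-1)$-factor product in which $\rho_k \otimes \rho_l$, viewed as a representation of $\bG_v$, replaces both $v$-factors; the length property of $L_v$ gives $L_v^+(\rho_k \otimes \rho_l) \leq L_v^+(\rho_k) + L_v^+(\rho_l)$, and the induction hypothesis concludes. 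The main obstacle is the combinatorial bookkeeping: a single vertex-contraction may not yield a reduced word, so the induction must proceed on the number of vertex-factors rather than on any reducedness parameter, and the extension $L_v^+$ to arbitrary vertex representations is what allows the induction to pass through non-irreducible intermediate steps.
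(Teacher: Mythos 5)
Your proof is correct, and while the bookkeeping steps (positivity, counit, centrality, and $\widehat{\kappa}$-invariance via Lemma \ref{Lem=KappComputation}) coincide with the paper's, your treatment of the crucial subadditivity $\widehat{\Delta}(L)\leq L\otimes 1+1\otimes L$ takes a genuinely different route. The paper, after the same reduction to the scalar inequality $f(\gamma)\leq f(\alpha)+f(\beta)$ for $\gamma\subseteq\alpha\otimes\beta$ (which, note, rests on the projection formula $\widehat{\Delta}(p_\gamma)(p_\alpha\otimes p_\beta)=p_\gamma^{\alpha\otimes\beta}$ from \eqref{Eqn=Projection}, not on centrality alone — you should cite that), argues in one pass: it shuffles $\alpha\otimes\beta$ with flip maps into the normal form \eqref{Eqn=ShuffledCorep}, where a suffix of $\alpha$ pairs off with a prefix of $\beta$ vertex by vertex, and then applies Peter--Weyl once to each merged pair $\alpha_{r+i}\otimes\beta_i$ inside $\bG_{v_{r+i}}$. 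You instead set up an induction on the number of tensor factors over arbitrary (not necessarily reduced) words, using the auxiliary quantity $L_v^+$ to pass through non-irreducible intermediate stages, and contract one offending pair at a time via the vertex length's subadditivity. What your version buys is robustness: it transparently handles the collapse that occurs when a merged pair contains the trivial representation (so that the resulting word is no longer reduced and further contractions are needed), a case the paper's single-pass argument glosses over when it asserts the identity $f(\gamma)=\sum_i f_{v_i}(\alpha_i)+\sum_i f(\gamma_i)+\sum_i f_{w_i}(\beta_i)$; the paper's version buys a shorter, more explicit description of how irreducibles of the graph product fuse. Two cosmetic points: the definition of non-reducedness gives intermediate letters in $\Star(v)$, so choose a pair $k<l$ with $l-k$ minimal to guarantee they lie in $\Link(v)$ before commuting $\rho_l$ leftward; and the base case of your induction should be recorded (a single vertex factor, where the claim is the definition of $L_v^+$).
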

\begin{proof}

We first check that $\widehat{\Delta}(L) \leq L \otimes 1 + 1 \otimes L$. Recall from \cite[Eqn. (1) in Proposition 3]{FimaPropT} that,
\begin{equation}\label{Eqn=Projection}
\widehat{\Delta}(p_\gamma ) (p_\alpha \otimes p_\beta) = \left\{
\begin{array}{ll}
p_\gamma^{\alpha \otimes \beta} & {\rm if } \:\: \gamma \subseteq \alpha \otimes \beta, \\
0 & {\rm otherwise},
\end{array}
\right.
\end{equation}
where $p_\gamma^{\alpha \otimes \beta} \in \cB(\cH_\alpha  \otimes \cH_\beta)$ is the projection onto the sum of all subrepresentations of $\alpha \otimes \beta$ that are equivalent to $\gamma$. Since the length functions $L_v, v \in V\Gamma$ are central, we know that $L_v = \oplus_{\alpha \in \Irr(\bG_v) } f_v(\alpha) p_\alpha$ for some $f_v: \Irr(\bG_v) \rightarrow [0, \infty)$ and similarly $L = \oplus_{\alpha \in \Irr(\bG)} f(\alpha) p_\alpha$. In fact, by definition of $L$ we have that $f(\alpha) = f_{v_1}(\alpha_1) + \ldots + f_{v_n}(\alpha_n)$ in case $\alpha = \alpha_1 \otimes \ldots \otimes \alpha_n$. The condition $\widehat{\Delta}(L) \leq L \otimes 1 + 1 \otimes L$ now becomes equivalent to the property that for every $\alpha, \beta \in \Irr(\bG)$ we have $\widehat{\Delta}(L) (p_\alpha \otimes p_\beta) \leq (L \otimes 1 + 1 \otimes L)p_\alpha \otimes p_\beta $, which by \eqref{Eqn=Projection} is equivalent to,
\begin{equation}\label{Eqn=ProjectionCondition}
\sum_{\gamma \in \Irr(\bG), \gamma \subseteq \alpha \otimes \beta} f(\gamma) p_\gamma^{\alpha \otimes \beta} \leq (f(\alpha) + f(\beta)) p_\alpha \otimes p_\beta.
\end{equation}

\noindent Now fix $\alpha = \alpha_1 \otimes \ldots \otimes \alpha_n \in \Irr(\bG)$ and $\beta = \beta_1 \otimes \ldots \otimes \beta_m \in \Irr(\bG)$. Let $v_i$ and $w_i$ be such that $\alpha_i \in \Irr(\bG_{v_i})$ and $\beta_i \in \Irr(\bG_{w_i})$. $\alpha \otimes \beta$ is not necessarily irreducible, c.f. Theorem \ref{Thm=GraphCoreps}. If $(v_i, v_{i+1}) \in E\Gamma$ then  $\alpha_1 \otimes \ldots \otimes \alpha_n$ is unitarily equivalent to $\alpha_1 \otimes \ldots \otimes \alpha_{i-1} \otimes \alpha_{i+1} \otimes \alpha_{i} \otimes \alpha_{i+2} \otimes \ldots \alpha_n$ by intertwining with the flip map $\id^{\otimes i-1} \otimes \Sigma \otimes \id^{\otimes n-i-1}$ . Therefore, without loss of generality, we may assume that there exists $r$ such that $v_1\ldots v_r w_1\ldots w_m$ is reduced and $v_1\ldots v_rw_1\ldots w_m \simeq v_1 \ldots v_n w_1 \ldots v_m$. Note that this implies that $w_1, \ldots, w_{n-r}$ commute and $\{w_1, \ldots, w_{n-r}\} = \{v_{r+1}, \ldots, v_m \}$. Therefore, without loss of generality, we may assume that $v_{r+1} = w_1, \ldots, v_{n} = w_{n-r}$ (since $\beta$ is equivalent to a representation for which this is true, again by intertwining with flip maps).
Then $\alpha \otimes \beta$ is equivalent to
\begin{equation}\label{Eqn=ShuffledCorep}
\alpha_1 \otimes \ldots \otimes \alpha_r \otimes \alpha_{r+1} \otimes \beta_1 \otimes \alpha_{r+2} \otimes \beta_2 \otimes \ldots \otimes \alpha_n \otimes \beta_{n-r} \otimes \beta_{n-r+1} \otimes \ldots \otimes \beta_m.
\end{equation}
Suppose that $\gamma \in \Irr(\bG)$ is contained in \eqref{Eqn=ShuffledCorep}. Then by the Peter-Weyl decompositions of $\alpha_{r+1} \otimes \beta_1, \ldots, \alpha_n \otimes \beta_{n-r}$, there exist irreducible representations $\gamma_1, \ldots, \gamma_{n-r}$ with $\gamma_1 \subseteq \alpha_{r+1} \otimes \beta_1, \ldots, \gamma_{n-r} \subseteq \alpha_n \otimes \beta_{n-r}$ such that $\gamma \simeq \alpha_1 \otimes \ldots \otimes \alpha_r \otimes \gamma_1 \otimes \ldots \otimes \gamma_{n-r} \otimes \beta_{n-r+1} \otimes \ldots \otimes \beta_m$. This implies that $f(\gamma) = \sum_{i=1}^{r} f_{v_i}(\alpha_i) + \sum_{i=1}^{n-r} f_{v_{i+r}}(\gamma_{i}) + \sum_{i=n-r+1}^{m} f_{w_i}(\beta_i)$ and since $f_{v_{i+r}}$ is a length function, this implies that $f(\gamma) \leq \sum_{i=1}^r f_{v_i}(\alpha_i) + \sum_{i=1}^{n-r} \left( f_{v_{i+r}}(\alpha_{i+r}) + f_{u_i}(\beta_i) \right) + \sum_{i=n-r+1}^{m} f_{w_i}(\beta_i) = \sum_{i=1}^n f_{v_i}(\alpha_i) + \sum_{i=1}^m f_{w_i}(\beta_i) $ and so condition \eqref{Eqn=ProjectionCondition} holds.

\noindent Next we check the relation $\widehat{\kappa}(L) = L$. Let $\alpha \in \Irr(\bG)$ and assume that it decomposes as a reduced tensor product $\alpha_1 \otimes \ldots \otimes \alpha_n$. The contragredient representation (see Lemma \ref{Lem=KappComputation}) is then given by $\overline{\alpha_n} \otimes \ldots \otimes \overline{\alpha_1}$. This implies, using Lemma \ref{Lem=KappComputation} and its notation, that $\widehat{\kappa}(E_{i_1, j_1}^{\alpha_1} \otimes \ldots \otimes E_{i_n, j_n}^{\alpha_n} ) = E_{j_n, i_n}^{\overline{\alpha_n}} \otimes \ldots \otimes E_{j_1, i_1}^{\overline{\alpha_1}} = \widehat{\kappa}(E_{i_n, j_n}^{\alpha_n}) \otimes \ldots \otimes \widehat{\kappa}(E_{i_1, j_1}^{\alpha_1}  ) $. Applying the latter observation to \eqref{Eqn=GraphProductLength} yields that $\widehat{\kappa}(L) = L$.

\noindent Finally, we have
$
\widehat{\epsilon}(L) = f(\alpha_0) p_{\alpha_0}$,
    with $\alpha_0 \in \Irr(\bG)$ the trivial representation. Since $f(\alpha_0) = 0$ we have $\widehat{\epsilon}(L) = 0$.
\end{proof}

\noindent The following Lemma \ref{Lem=RDTensor} uses the notion of polynomial growth \cite{VerRd}. Examples of discrete quantum groups with polynomial growth can be found in \cite{BanicaVer}.
\begin{dfn}
Let $\bG$ be a compact quantum group with central length $L$ and dual Haar weight $\widehat{\omega}$. Set $q_n = \chi_{[n, n+1)}(L)$. $\bG$ has polynomial growth if there exists a polynomial $P$ such that for every $n$ we have $\widehat{\omega}(q_n) \leq P(n)$.
\end{dfn}

\begin{lem} \label{Lem=RDTensor}
Let $\bG_1$ and $\bG_2$ be compact quantum groups such that $(\widehat{\bG}_1, L_1)$ has (RD). If either $(\widehat{\bG}_2,L_2)$ has polynomial growth or is a classical discrete group with (RD) then $(\widehat{\bG}_1 \times \widehat{\bG}_2, L)$ has (RD) where $L$ was defined in Lemma \ref{Lem=GraphProductLength}.
\end{lem}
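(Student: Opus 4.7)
The plan is to put on $\widehat{\bG}_1\times\widehat{\bG}_2$ the natural product length $L=L_1\otimes 1+1\otimes L_2$ and to check that it is a central length by inspecting the four axioms: positivity, vanishing under $\widehat\epsilon$ and invariance under $\widehat\kappa$ are immediate from the corresponding properties of $L_1,L_2$, and the subadditivity $\widehat\Delta(L)\le L\otimes 1+1\otimes L$ follows from $\widehat\Delta=(\widehat\Delta_1\otimes\widehat\Delta_2)$ (up to the usual flip) combined with the sub-additivity of each $L_i$. For transparency I will assume integer spectra, so that $q_k(L)=\sum_{i+j=k}q^{(1)}_i\otimes q^{(2)}_j$; the general case reduces to this by grouping into unit-width bands at a harmless cost in the final polynomial.

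Given $a\in q_k(L)\,\widehat{\bA}_{12}$, I would decompose
\[
a=\sum_{i+j=k}a_{ij},\qquad a_{ij}=(q^{(1)}_i\otimes q^{(2)}_j)\,a\in(q^{(1)}_i\widehat\bA_1)\otimes(q^{(2)}_j\widehat\bA_2),
\]
so that $\|a\|_2^2=\sum_{i+j=k}\|a_{ij}\|_2^2$ by orthogonality. The central fact is that the Fourier transform of a product quantum group factorizes, $\mathcal F(b_1\otimes b_2)=\mathcal F_1(b_1)\otimes\mathcal F_2(b_2)$, and the spectral projections of $L$ also factor. Hence, for a \emph{simple tensor} $a_{ij}=b_1\otimes b_2$ with $b_1\in q^{(1)}_i\widehat\bA_1$ and $b_2\in q^{(2)}_j\widehat\bA_2$, one immediately has
\[
\|q_m(L)\,\mathcal F(a_{ij})\,q_l(L)\|\le P_1(i)P_2(j)\|b_1\|_2\|b_2\|_2=P_1(i)P_2(j)\|a_{ij}\|_2
\]
by applying (RD) for $\widehat{\bG}_1$ and $\widehat{\bG}_2$ separately (using that polynomial growth implies (RD)). The task is to extend this to general $a_{ij}$, which are sums of simple tensors.

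This is where the two hypotheses on $\widehat{\bG}_2$ enter. If $\widehat{\bG}_2$ has polynomial growth, then $q^{(2)}_j\widehat\bA_2=\bigoplus_{\beta:L_2(\beta)=j}M_{n_\beta}$ has dimension bounded by a polynomial in $j$. I would pick an $L^2$-orthonormal basis $\{e_r\}_{r=1}^{N_j}$ of $q^{(2)}_j\widehat\bA_2$ and write $a_{ij}=\sum_r b_r\otimes e_r$ with $b_r\in q^{(1)}_i\widehat\bA_1$ and $\|a_{ij}\|_2^2=\sum_r\|b_r\|_2^2$; the triangle inequality, the (RD) estimate for $\widehat{\bG}_1$ applied to each $b_r$, the polynomial bound on $N_j$, and one application of Cauchy--Schwarz then yield $\|\mathcal F(a_{ij})\|\le P(k)\|a_{ij}\|_2$. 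If instead $\widehat{\bG}_2=G$ is a classical discrete group with (RD), then $\widehat\bA_2=c_0(G)$ is commutative and $a_{ij}=\sum_{g:L_2(g)=j}a_g\otimes\delta_g$ with $a_g\in q^{(1)}_i\widehat\bA_1$; after Fourier transform in the second leg we obtain an element $\sum_{g}\mathcal F_1(a_g)\otimes\lambda_g\in\bA_1\otimes C_r^*(G)$, to which I would apply an \emph{operator-valued} Haagerup inequality for $G$ (available precisely because $G$ is a classical group with (RD)): such a bound controls $\|\sum_gb_g\otimes\lambda_g\|$ by $Q(j)$ times a suitable $L^2$ quantity of the coefficients $b_g$, which in turn is controlled by (RD) for $\widehat{\bG}_1$.

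Finally, I would sum the bounds over the $k+1$ indices $i+j=k$ via Cauchy--Schwarz, producing an extra factor of $\sqrt{k+1}$ that is absorbed into a polynomial $P'$, giving $\|q_m(L)\mathcal F(a)q_l(L)\|\le P'(k)\|a\|_2$ uniformly in $m,l$. The main difficulty of the argument is the classical-group case: passing from simple tensors to arbitrary $a_{ij}$ requires an operator-valued Haagerup-type estimate, which is precisely why the hypothesis on $\widehat\bG_2$ must be restrictive and cannot simply read ``$\widehat\bG_2$ has (RD)''; the polynomial growth case, by contrast, is easy because the relevant length-$j$ band is finite-dimensional of polynomial size.
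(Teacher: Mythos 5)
Your proposal is correct and follows essentially the same route as the paper: put the length $L_1\otimes 1+1\otimes L_2$ on the product, use the factorization $\mathcal{F}(b_1\otimes b_2)=\mathcal{F}_1(b_1)\otimes\mathcal{F}_2(b_2)$, treat the polynomial-growth case by the polynomially bounded dimension of the length-$j$ band together with the triangle and Cauchy--Schwarz inequalities, and treat the classical-group case by an operator-coefficient Haagerup estimate for $G$. Two remarks. First, the paper sidesteps your integer-spectrum reduction entirely by using the cruder domination $q_k\leq q_{\leq k}^{(1)}\otimes q_{\leq k}^{(2)}$ and proving the full-norm bound $\Vert\mathcal{F}(x)\Vert\leq P(k)\Vert x\Vert_2$ on $(q_{\leq k}^{(1)}\otimes q_{\leq k}^{(2)})\widehat{\bA}$, which is simpler and loses nothing. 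Second, and more substantively, the ``operator-valued Haagerup inequality'' you quote as a black box is exactly the point the paper proves inline, and it deserves the two lines it takes: for $x=\sum_g a_g\otimes\delta_g$ with $a_g$ supported in lengths $\leq k$ for both legs, and $\xi=\sum_h\xi_h\otimes\delta_h$, one has
\[
\Vert\mathcal{F}(x)\xi\Vert^2=\sum_h\Big\Vert\sum_g\mathcal{F}_1(a_g)\xi_{g^{-1}h}\Big\Vert^2\leq P_1(k)^2\,\Vert\psi\ast\varphi\Vert_{\ell^2(G)}^2,\qquad \psi(g)=\Vert a_g\Vert_2,\ \varphi(g)=\Vert\xi_g\Vert,
\]
after which the scalar (RD) inequality for $G$ applied to $\psi\ast\varphi$ finishes the estimate. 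So the coefficient version with $\bigl(\sum_g\Vert b_g\Vert^2\bigr)^{1/2}$ on the right is an elementary consequence of scalar (RD), not an extra hypothesis; you should either give this short convolution argument or cite it precisely, since as stated it is the only unproved step in your classical-group case. Your verification of the length axioms is fine but redundant here: it is a special case of the graph-product length lemma (Lemma \ref{Lem=GraphProductLength}).
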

\begin{proof}
Let $q_k^{(1)}, q_k^{(2)}$ and $q_k$ be the spectral projections onto $[k, k+1)$ of respectively $L_1, L_2$ and $L$. Also write $q_{\leq k}^{(1)} = \sum_{j=0}^k q_j^{(1)}, q_{\leq k}^{(2)} = \sum_{j=0}^k q_j^{(2)}$.  Let $\widehat{\bA}_1, \widehat{\bA}_2$ and $\widehat{\bA}$ be the C$^\ast$-algebras associated to the duals of respectively $\bG_1, \bG_2$ and $\bG$. Let $P_1$, $P_2$ be the polynomials witnessing (RD) for $\widehat{\bG}_1$ and $\widehat{\bG}_2$ respectively.

\vspace{0.2cm}

\noindent In order to prove that, for $x \in q_k \widehat{\bA}$, we have $\Vert q_l \mathcal{F}(x) q_m \Vert \leq P(k) \Vert x \Vert_2$ for some polynomial $P$, it suffices to prove that, for every $k$, the following estimate holds,
\[
\Vert  \mathcal{F}(x)   \Vert \leq P(k) \Vert x \Vert_2, \qquad \textrm{ for all } x \in (q_{\leq k}^{(1)} \otimes q_{\leq k}^{(2)})  \widehat{\bA}.
\]
It follows from the fact that $q_{\leq k}^{(1)} \otimes q_{\leq k}^{(2)} \geq q_k$. Observe that, by definition, we have $\mathcal{F}(a\ot b)=\mathcal{F}_1(a)\ot\mathcal{F}_2(b)$ for all $a\in q_{\leq k}^{(1)}\widehat{\bA}_1$ and all $b\in q_{\leq k}^{(2)}\widehat{\bA}_2$.

\vspace{0.2cm}

\noindent First assume that $(\widehat{\bG}_2,L_2)$ has polynomial growth (so, in particular it is an amenable discrete Kac algebra with property (RD) \cite{VerRd}). Let $\widehat{\omega}_i$ be the Haar weight on $\widehat{\bA}_i$, $i=1,2$. By polynomial growth we have $\widehat{\omega}_2\left(q^{(2)}_n\right) \leq P_3(n)$, where $P_3$ is a polynomial with $P_3(n)\geq 1$ for all $n$. Let $x\in(q_{\leq k}^{(1)} \otimes q_{\leq k}^{(2)})  \widehat{\bA}$ be a finite sum $x=\sum_ia_i\ot b_i$, where $a_i\in q_{\leq k}^{(1)}\widehat{\bA}_1$ with $\Vert a_i\Vert_2<\infty$ and $b_i\in q_{\leq k}^{(2)}\widehat{\bA}_2$ with $\Vert b_i\Vert_2<\infty$ for all $i$. We may and will assume that $(a_i)$ is an orthonormal system with respect to the scalar product given by $\widehat{\omega}_1$. Hence, $\Vert x\Vert_2^2=\sum_i\Vert b_i\Vert_2^2$ and,
\begin{eqnarray*}
\Vert\mathcal{F}(x)\Vert  &=&  \Vert\sum_i\mathcal{F}_1(a_i)\ot\mathcal{F}_2(b_i)\Vert\leq\sum_i\Vert\mathcal{F}_1(a_i)\Vert\,\Vert\mathcal{F}_2(b_i)\Vert
\leq P_1(k)P_2(k)\sum_i\Vert a_i\Vert_2\Vert b_i\Vert_2\\
&=&P_1(k)P_2(k)\sum_i\Vert b_i\Vert_2\leq P_1(k)P_2(k)\sqrt{\dim(q_{\leq k}^{(2)}\widehat{\bA}_2)}\sqrt{\sum_i\Vert b_i\Vert_2^2}=P_1(k)P_2(k)\sqrt{\dim(q_{\leq k}^{(2)}\widehat{\bA}_2)}\Vert x\Vert_2\\
&=&P_1(k)P_2(k)\Vert x\Vert_2\sqrt{\widehat{\omega}_2\left(q_{\leq k}^{(2)}\right)}=P_1(k)P_2(k)\Vert x\Vert_2\sqrt{\sum_{j=0}^k\widehat{\omega}_2\left(q_{j}^{(2)}\right)}
\leq P_1(k)P_2(k)\Vert x\Vert_2\sqrt{\sum_{j=0}^kP_3(j)}\\
&\leq&P_1(k)P_2(k)\left(\sum_{j=0}^kP_3(j)\right)\Vert x\Vert_2\leq Q(k)\Vert x\Vert_2,
\end{eqnarray*}
where $Q$ is a polynomial.

\vspace{0.2cm}

\noindent Now assume that $(\widehat{\bG}_2,L_2)$ is a discrete group, denoted by $G$, with property (RD) so that we may take $x\in(q_{\leq k}^{(1)} \otimes q_{\leq k}^{(2)})  \widehat{\bA}$ to be a finite sum of the form $x=\sum_g a_g\ot\delta_g$, where $a_g\in q_{\leq k}^{(1)}\widehat{\bA}_1$ with $\Vert a_g\Vert_2<\infty$ for all $g$, $\delta_g\in l^{\infty}(G)$ is the Dirac function at $g\in G$ and $a_g=0$ for all $g\in G$ such that $L_2(g)> k$. Hence we have $\Vert x\Vert_2^2=\sum_g\Vert a_g\Vert_2^2$ and $\mathcal{F}(x)=\sum_g\mathcal{F}_1(a_g)\ot\lambda_g$, where $\lambda_g\in\mathcal{B}(l^2(G))$ is the left translation by $g\in G$. Let $\xi\in {\rm L}^2(\bG_1)\ot l^2(G)$ be a finite sum $\xi=\sum_h \xi_h \ot\delta_h$. One has:

%I added subscripts 2 in the next equation to make it more consistent with for example the next proposition.
\begin{eqnarray*}
\Vert\mathcal{F}(x)\xi\Vert_2^2 &=&\Vert\sum_{g,h}\mathcal{F}_1(a_g)\xi_h\ot\delta_{gh}\Vert_2^2
=\Vert\sum_{g,h}\mathcal{F}_1(a_g)\xi_{g^{-1}h}\ot\delta_{h}\Vert_2^2=\sum_h\Vert\sum_g\mathcal{F}_1(a_g)\xi_{g^{-1}h}\Vert_2^2\\
&\leq&\sum_h\left(\sum_g\Vert\mathcal{F}_1(a_g)\xi_{g^{-1}h}\Vert_2\right)^2\leq P_1(k)^2\sum_h\left(\sum_g\Vert a_g\Vert_2\,\Vert\xi_{g^{-1}h}\Vert_2 \right)^2
=P_1(k)^2\Vert\psi*\varphi\Vert_{l^2(G)}^2,
\end{eqnarray*}
where $\psi,\varphi\in l^2(G)$ are defined by $\psi(g)=\Vert a_g\Vert_2$ and $\varphi(g)=\Vert\xi_g\Vert$. Observe that $\Vert \psi\Vert_{l^2(G)}^2=\sum_g\Vert a_g\Vert_2^2=\Vert x\Vert_2^2$ and $\Vert\varphi\Vert_{l^2(G)}^2=\sum_g\Vert\xi_g\Vert^2=\Vert\sum_g\xi_g\ot\delta_g\Vert^2=\Vert\xi\Vert^2$. Since $\psi$ is supported on elements $g\in G$ of length less that $k$, we may use (RD) for $G$ and we find:

$$\Vert\mathcal{F}(x)\xi\Vert^2\leq P_1(k)^2P_2(k)^2\Vert\psi\Vert_{l^2(G)}^2\Vert\varphi\Vert_{l^2(G)}^2=P_1(k)^2P_2(k)^2\Vert x\Vert_2^2\Vert\xi\Vert^2.$$
This finishes the proof.
\end{proof}

\noindent Let $P_m: \cH \rightarrow \cH$ be the projection onto the closure of the span of the spaces $\cH_{\boldw}$ with $\boldw$ a minimal word of length $m \in \mathbb{N}$.

\begin{prop}\label{Prop=RDIntermediate}
Let $\Gamma$ be a finite graph and for every $v \in V\Gamma$ let $\bG_v$ be a compact quantum group such that $(\widehat{\bG}_v, L_v)$ has (RD). Moreover, assume that for every clique $\Gamma_0$ of $\Gamma$ the graph product $\widehat{\bG}_{\Gamma_0}$ has (RD).
 Let $\bG$ be the graph product with respect to $\Gamma$ and let  $\widehat{\bG} = (\widehat{A}, \widehat{\Delta})$ be its discrete dual. There exists a polynomial $P \in \mathbb{R}[X]$ such that for every $k,l,m \in \mathbb{N}$ such that $\vert k-l \vert \leq m \leq k+l$ and $a \in \widehat{\bA}_{(k)}$ we have $\Vert P_m \mathcal{F}(a) P_l \Vert \leq P(k) \Vert a \Vert_{2}$.
\end{prop}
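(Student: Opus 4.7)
The plan is to decompose $P_m\mathcal{F}(a)P_l$ according to how a reduced word $\boldv$ of length $k$ labelling a block of $a$ can overlap with a minimal word $\boldw$ of length $l$ indexing a tensor component of the image of $P_l$. First one reduces by direct-sum linearity to the case where $a$ is supported on a single block of $\widehat{\bA}_{(k)}$ corresponding to a reduced tensor representation $\alpha=\alpha_1\otimes\cdots\otimes\alpha_k$ with $\alpha_i\in\Irr(\bG_{v_i})$ and $\boldv=(v_1,\ldots,v_k)$ reduced. Iterating the formulas of Proposition \ref{Prop=commutation} together with Theorem \ref{Thm=GraphCoreps} then gives an explicit description of how $\mathcal{F}(a)$ sends each vector in $\cH_{\boldw}$ into $\cH$.

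For a minimal word $\boldw$ of length $l$, the image $\mathcal{F}(a)\cH_{\boldw}$ spreads across subspaces $\cH_{\boldu}$ whose lengths $m$ satisfy $|k-l|\leq m\leq k+l$, and the length $m=k+l-2s$ arises precisely when exactly $s$ letters cancel. Setting $s=(k+l-m)/2$, Lemma \ref{Lem=ReducedRep} forces the cancelling $s$-letter tail of $\boldv$ and the cancelling $s$-letter head of $\boldw$ to live in a common clique $\Gamma_0\subseteq\Gamma$ of size $s$, since only type~II commutations may reorder them to meet. This suggests a decomposition
\[
P_m\mathcal{F}(a)P_l \;=\; \sum_{\Gamma_0\in{\rm Cliq}(s)} T_{\Gamma_0},
\]
indexed by the cliques of $\Gamma$ of the appropriate size, where $T_{\Gamma_0}$ is the piece coming from the cancellation pattern across $\Gamma_0$.

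The key observation is that each $T_{\Gamma_0}$ factors through the graph product Fourier transform of $\widehat{\bG}_{\Gamma_0}$: the non-cancelling prefix of $\boldv$ and the non-cancelling suffix of $\boldw$ act on disjoint tensor legs by essentially isometric identifications via the unitaries $\cQ$, while the cancelling part lives entirely inside $\widehat{\bA}_{\Gamma_0}$. Applying the hypothesised (RD) estimate for $\widehat{\bG}_{\Gamma_0}$ at level $s\leq k$ then yields $\Vert T_{\Gamma_0}\Vert \leq Q_{\Gamma_0}(k)\Vert a\Vert_2$ for some polynomial $Q_{\Gamma_0}$ depending only on $\Gamma_0$. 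Summing over the finitely many cliques of $\Gamma$ produces a single polynomial $P$ with $\Vert P_m\mathcal{F}(a)P_l\Vert\leq P(k)\Vert a\Vert_2$.

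The main obstacle will be the second step: setting up a clean combinatorial decomposition of $\mathcal{F}(a)$ via the commutation relations of Proposition \ref{Prop=commutation} and verifying that, for a fixed clique size $s$, the different $T_{\Gamma_0}$ act on approximately orthogonal portions of $P_l\cH$ and $P_m\cH$, so that the triangle inequality does not destroy the polynomial estimate. The standard device here is the Haagerup trick $\Vert P_m\mathcal{F}(a)P_l\Vert^{2}=\Vert P_m\mathcal{F}(a)P_lP_l\mathcal{F}(a)^{*}P_m\Vert$, which combined with the clique factorisation reduces the problem to a purely ``inside-clique'' (RD) bound; once this is in place, the rest is bookkeeping with the finitely many cliques of $\Gamma$.
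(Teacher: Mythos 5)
There is a genuine gap, and it sits exactly where the clique hypothesis has to do its work. Your combinatorial model of how $\mathcal{F}(a)$ moves $\cH_{\boldw}$ around is too coarse in two related ways. First, when a letter of the operator meets a letter of the vector from the same vertex $v$, the product decomposes as $P_v(a\xi)+\langle a\xi,\xi_v\rangle\Omega$: besides full cancellation there is a \emph{merge} (diagonal) term which shortens the word by one, not two. Hence the lengths $m$ are not restricted to $m=k+l-2s$; all $m$ with $\vert k-l\vert\leq m\leq k+l$ occur (as the statement itself allows), and your parameter $s=(k+l-m)/2$ is not even defined when $k+l-m$ is odd. Second, the claim that the cancelling tail of $\boldv$ and head of $\boldw$ must lie in a common clique is false: cancellation is sequential and needs no commutation among the cancelling letters --- already in a plain free product (no edges, so only singleton cliques) arbitrarily long cancellations occur. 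What genuinely must lie in a clique is the block of \emph{merged} letters sandwiched between the part of the operator that creates new letters and the part that annihilates; this is what the paper extracts by writing each letter as $(P_w+P_w^\perp)a_w(P_w+P_w^\perp)$ and showing that any nonvanishing pattern has the form (creation)$\cdot$(diagonal, supported on a clique)$\cdot$(annihilation), with the length bookkeeping $m+k-l=s+r$.

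Because of this, your decomposition $P_m\mathcal{F}(a)P_l=\sum_{\Gamma_0}T_{\Gamma_0}$ indexed by cliques carrying the \emph{cancelling} letters does not exist, and the step where the hypothesis that every clique graph product has (RD) is actually needed is never engaged: the cancelling part only produces scalars $\langle a\xi,\Omega\rangle$ and is controlled by Cauchy--Schwarz and orthonormality, whereas the dangerous term is the length-preserving ``convolution'' of the merged middle block inside the clique subalgebra (a tensor product of the vertex quantum groups), whose operator norm cannot be bounded by an $\ell^2$-norm without the clique (RD) assumption. The paper's proof applies the clique (RD) estimate precisely to that middle block, after decoupling the creation and annihilation legs via orthonormal bases $\widehat{a}_{v,j}$ and orthogonality of the output subspaces, and only then sums over the finitely many cliques and the admissible $(r,s)$. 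Your sketch could be repaired along these lines, but as written the key structural lemma (which letters form a clique, and which lengths occur) is incorrect, so the subsequent factorisation and estimate do not go through.
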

\begin{proof}
For each $v \in V \Gamma$ we let $a_{v,j}, j \in J_v$, be elements of $\bA_v^\circ$ such that $\widehat{a}_{v,j} := \widehat{a}_{v,j}\Omega_v, j \in J_v$ is an orthonormal basis of $\cH_v^\circ$. Set $\xi_{v,j} = \widehat{a}_{v,j}^\ast = a_{v,j}^\ast\Omega_v, j \in J_v$ which also is an orthonormal basis of $\cH_v^\circ$ since $\bG_v$ has a tracial Haar weight \cite[Proposition 4.7]{VerRd}. In particular, $\Vert \xi_{v,j} \Vert = \Vert \widehat{a}_{v,j} \Vert$. Throughout the proof we shall use the convention that a summation  $\sum_{j} a_{v,j}$ in fact is the summation over $j \in J_v$.  To prove the proposition it suffices to assume that,
\[
\begin{split}
\mathcal{F}(a)  =& \sum_{\boldw \in \cWmin, l(\boldw) = k} \sum_{j_1 \ldots j_k} \lambda_{\boldw, j_1, \ldots, j_k} a_{w_1, j_1} \ldots a_{w_k, j_k},\\
\xi =  &  \sum_{\boldv \in \cWmin, l(\boldv) = l} \sum_{i_1 \ldots i_l} \mu_{\boldv, i_1, \ldots, i_l} \xi_{v_1, i_1} \otimes \ldots \otimes \xi_{v_l, i_l}.
\end{split}
\]
Firstly, using the notation introduced before Lemma \ref{Lem=GraphProductLength},
\begin{equation}\label{Eqn=LoadsOfPs}
\begin{split}
& P_m \mathcal{F}(a) \xi \\
= & P_m \left(  \sum_{\boldw \in \cWmin, l(\boldw) = k} \sum_{j_1, \ldots, j_k} \lambda_{\boldw, j_1, \ldots, j_k} (P_{w_1} + P_{w_1}^\perp ) a_{w_1, j_1} (P_{w_1} + P_{w_1}^\perp) \ldots (P_{w_k} + P_{w_k}^\perp) a_{w_k, j_k}  (P_{w_k} + P_{w_k}^\perp)  \right)\\
 &\times \left( \sum_{\boldv \in \cWmin, l(\boldv) = l}  \sum_{i_1, \ldots, i_l} \mu_{\boldv, i_1, \ldots, i_l} \xi_{v_1, i_1}  \otimes \ldots \otimes \xi_{v_l, i_l}   \right) \\
\end{split}
\end{equation}
A large part of the terms in the product of these sums vanishes in fact as follows from the following observations.

\vspace{0.3cm}

\noindent {\bf Reduction of the operator part.}
Firstly, consider an expression:
\begin{equation}\label{Eqn=QSeries}
Q_{w_1}^{(2)} a_{w_1, j_1} Q_{w_1}^{(1)}\ldots Q_{w_k}^{(2)} a_{w_k, j_k} Q_{w_k}^{(1)},
\end{equation}
with $Q_{w_i}^{(1)}$ and $Q_{w_i}^{(2)}$ equal to either $P_{w_i}$ or $P_{w_i}^\perp$. Assume that \eqref{Eqn=QSeries} is non-zero, then this implies the following:
\begin{enumerate}
\item\label{Item=EnumiI} If $Q_{w_i}^{(1)} = P_{w_i}^\perp$ then $Q_{w_i}^{(2)} = P_{w_i}$.
\item\label{Item=EnumiII} If $Q_{w_i}^{(2)} = P_{w_i}$, then it must be true that $Q_{w_{i-1}}^{(1)} = P_{w_{i-1}}^\perp$ or $(w_{i-1}, w_i) \in E\Gamma$.
\end{enumerate}
These observations yield that, without loss of generality, \eqref{Eqn=QSeries} can assumed to be of a specific form.
\begin{itemize}
\item We claim that \eqref{Eqn=QSeries} can be assumed to be of the form:
\begin{equation}\label{Eqn=QSemiSeries}
Q_{w_1}^{(2)} a_{w_1, j_1} Q_{w_1}^{(1)}\ldots Q_{w_s}^{(2)} a_{w_s, j_s} Q_{w_s}^{(1)} P_{w_{s+1}}^{\perp} a_{w_{s+1}, j_{s+1}} P_{w_{s+1}} \ldots P_{w_{k}}^{\perp} a_{w_{k}, j_{k}} P_{w_{k}},
\end{equation}
where for every $1 \leq i \leq s$ we do not have that $Q_{w_i}^{(2)} = P_{w_{i}}^{\perp}$ and $Q_{w_i}^{(1)} = P_{w_{i}}$. In order to prove this claim first note that if $Q_{w_i}^{(2)} = P_{w_{i}}^{\perp}$ and $Q_{w_i}^{(1)} = P_{w_{i}}$ then it follows from \eqref{Item=EnumiII} that either $Q_{w_{i+1}}^{(2)} = P_{w_{i+1}}^{\perp}$ and $Q_{w_{i+1}}^{(1)} = P_{w_{i+1}}$ or  $(w_{i}, w_{i+1}) \in E\Gamma$. It then suffices to show that in the latter case the operators $P_{w_{i}}^\perp a_{w_{i}, j_{i}} P_{w_{i}} $ and $Q_{w_{i+1}}^{(2)} a_{w_{i+1}, j_{i+1}} Q_{w_{i_1}}^{(1)}$ commute. So firstly observe that $P_{w_i} P_{w_{i+1}}$ is a projection  and hence   $P_{w_i}$ and  $P_{w_{i+1}}$ commute. By taking complements, any of the projections $P_{w_i}, P_{w_i}^\perp, P_{w_{i+1}}$ and $P_{w_{i+1}}^\perp$ commute. It follows from Lemma \ref{Prop=commutant} that $P_{w_{i}}^\perp a_{w_{i}, j_{i}} P_{w_{i}} $ and $Q_{w_{i+1}}^{(2)} a_{w_{i+1}, j_{i+1}} Q_{w_{i+1}}^{(1)}$ commute. This concludes \eqref{Eqn=QSemiSeries}.
\item An analogous argument as in the previous bullet point yields that, without loss of generality, we may assume that \eqref{Eqn=QSeries} has the form,
\begin{equation}\label{Eqn=QSemiSeriesII}
Q_{w_1}^{(2)} a_{w_1, j_1} Q_{w_1}^{(1)}\ldots Q_{w_r}^{(2)} a_{w_r, j_r} Q_{w_r}^{(1)}
P_{w_{r+1}}^{(2)} a_{w_{r+1}, j_{r+1}} P_{w_{r+1}}^{(1)} \ldots P_{w_{s}}^{(2)} a_{w_{s}, j_{s}} P_{w_{s}}^{(1)}
P_{w_{s+1}}^{\perp} a_{w_{s+1}, j_{s+1}} P_{w_{s+1}} \ldots P_{w_{k}}^{\perp} a_{w_{k}, j_{k}} P_{w_{k}},
\end{equation}
and that for every $1 \leq i \leq r$ we do not have that $Q_{w_i}^{(1)} = P_{w_{i}}$.
 \item If $Q_{w_i}^{(1)} = P_{w_{i}}^\perp$ then this implies that $Q_{w_i}^{(2)} = P_{w_{i}}$ by \eqref{Item=EnumiI}. So \eqref{Eqn=QSemiSeriesII} shows that the expression  \eqref{Eqn=QSeries} can be written as,
\begin{equation}\label{Eqn=PSeries}
P_{w_1} a_{w_1, j_1} P_{w_1}^\perp \ldots P_{w_r} a_{w_r, j_r} P_{w_r}^\perp P_{w_{r+1} } a_{w_{r+1}, j_{r+1} } P_{w_{r+1}} \ldots P_{w_s} a_{w_s, j_s} P_{w_s} P_{w_{s+1}}^\perp a_{w_{s+1}, j_{s+1} } P_{w_{s+1}} \ldots P_{w_k}^\perp a_{w_k, j_k}P_k,
\end{equation}
for some $0 \leq r \leq s \leq k$ (the cases $r=0$ and $s=k$ should be understood naturally).

\item Moreover, suppose that $s>r+1$. Then it follows from \eqref{Item=EnumiI} that $(w_{r+1}, w_{r+2}) \in E\Gamma$. As in the first bullet point  this implies that $P_{w_{r+1} } a_{w_{r+1}, j_{r+1} } P_{w_{r+1}}$ and $P_{w_{r+2} } a_{w_{r+2}, j_{r+2} } P_{w_{r+2}}$ commute. Hence it follows from \eqref{Item=EnumiII} that $(w_{r+1}, w_{r+3}) \in E\Gamma$ (provided that $s>r_2$) and inductively we find that $(w_{r+1}, w_{i}) \in E\Gamma$ for every $r+1 \leq i \leq s$. The same argument yields that actually $(w_i, w_j) \in E\Gamma$ for every $r+1 \leq i, j \leq s$. We conclude that $w_{r+1}, \ldots, w_s$ are in clique of $\Gamma$.
\end{itemize}

\noindent {\bf Reduction of the vector part.}
Now suppose that a vector $\xi_{v_1, i_1} \otimes \ldots \otimes \xi_{v_l, i_l}$ is not in the kernel of \eqref{Eqn=PSeries}. Then this implies that we may assume (using the commutation relations given by $E\Gamma$ to permute terms in \eqref{Eqn=PSeries})   that $v_1= w_k, \ldots, v_{k-r} = w_{r+1}$, that $w_{s} \ldots w_{r+1}$ is contained in a clique and furthermore that $v_{k-r+1} \not = w_{r}$. And in that case,
\begin{equation}\label{Eqn=OperatorActsOnSpace}
\begin{split}
&\left( P_{w_1} a_{w_1, j_1} P_{w_1}^\perp \ldots P_{w_r} a_{w_r, j_r} P_{w_r}^\perp P_{w_{r+1}} a_{w_{r+1}, j_{r+1} } P_{w_{r+1}} \ldots P_{w_s} a_{w_s, j_s } P_{w_s} P_{w_{s+1}}^\perp a_{w_{s+1}, j_{s+1}  }  P_{w_{s+1}}\ldots P_{w_k}^\perp a_{w_k, j_k} P_k \right) \\
& \left( \xi_{v_1, i_1} \otimes \ldots \otimes \xi_{v_l, i_l} \right) =\\
 &  \widehat{a}_{w_1, j_1} \otimes \ldots \otimes \widehat{a}_{w_r, j_r} \otimes  P_{w_s} a_{w_s, j_s} \xi_{v_{k-s+1}, i_{k-s+1} }  \otimes \ldots \otimes P_{w_{r+1}} a_{w_{r+1}, j_{r+1} } \xi_{v_{k-r}, i_{k-r} } \\
& \otimes \xi_{v_{k-r+1}, i_{k-r+1}} \otimes \ldots \otimes \xi_{v_l, i_l} \times \langle a_{w_k, j_k} \xi_{v_1, i_1}, \Omega \rangle \ldots \langle a_{w_{s+1}, j_{s+1} } \xi_{v_{k-s}, i_{k-s}}, \Omega \rangle,
\end{split}
\end{equation}
where we explicitly mention that some of the indices in the triple dots of the right hand side of this expression either  increase or decrease by steps of 1.
 Looking at the length of tensor products shows that \eqref{Eqn=OperatorActsOnSpace} is in the kernel of $P_m$ unless $m+k-l = s+r$.

 \vspace{0.3cm}

\noindent {\bf Remainder of the proof.}  Now we conclude from \eqref{Eqn=LoadsOfPs} and \eqref{Eqn=OperatorActsOnSpace} that,
\begin{equation}\label{Eqn=LargeSummation}
\begin{split}
& \Vert P_m \mathcal{F}(a) \xi \Vert_2^2  \\
\leq & \sum_{\begin{array}{c}   m+k-l = s+r \\ 0 \leq s,r \leq k \end{array}} \!\!\!\!\!\!\!\! \sum_{\begin{array}{c}(u_1, \ldots, u_{s-r}) \in {\rm Cliq}_\Gamma(s-r) \end{array} } \!\!\!\!\!\!\!\!\!\!\!\!\!\!\!\!\!\!\!\!\!\!\!\!\!\!\!\!\!\!\!\! \sum_{\begin{array}{c} \boldw, \boldv \in \cWmin,\\ l(\boldw) = k, l(\boldv) = l,\\ v_1 = w_k \ldots v_{k-r} = w_{r+1}, \\ (v_{k-s+1},\ldots, v_{k-r}  ) = ( w_{s}, \ldots, w_{r+1}) = (u_1, \ldots, u_{s-r}) \end{array}} \!\!\!\!\!\!\!\!\!\!\!\!\!\!\!\!\!\!\!\!\!\!\!\!\!\!\!\!\!\!\!\!\sum_{\begin{array}{c}j_1, \ldots, j_r,\\ j_{s+1}, \ldots, j_k\end{array}} \!\!\!\!\!\sum_{\begin{array}{c} i_1, \ldots, i_{k-s},\\ i_{k-r+1}, \ldots, i_l \end{array}} \\
 & \qquad \Vert \widehat{a}_{w_1, j_1} \Vert_2^2 \ldots \Vert \widehat{a}_{w_r, j_r} \Vert_2^2 \times  \delta_{j_k, i_1} \Vert  \widehat{a}_{w_k, j_k} \Vert_2^4  \ldots \delta_{j_{s+1}, i_{k-s}} \Vert \widehat{a}_{w_{s+1}, j_{s+1}} \Vert_2^4 \\
 &\qquad \left|\left|  \sum_{j_{r+1}, \ldots, j_s} a_{w_{r+1}, j_{r+1} }\ldots a_{w_s, j_s}  \sum_{i_{k-r}, \ldots, i_{k-s+1}}  \xi_{v_{k-r}, i_{k-r}} \otimes \ldots \otimes \xi_{v_{k-s+1}, i_{k-s+1}}  \right| \right|_2^2.
\end{split}
\end{equation}
We have, since $\widehat{\bG}_{\Gamma_0}$ has (RD) by assumption for every clique $\Gamma_0$ in $\Gamma$,
\begin{equation}\label{Eqn=SubSummation}
\begin{split}
&  \left|\left|  \sum_{j_{r+1}, \ldots, j_s} a_{w_{r+1}, j_{r+1} }\ldots a_{w_s, j_s}  \sum_{i_{k-r}, \ldots, i_{k-s+1}}  \xi_{v_{k-r}, i_{k-r}} \otimes \ldots \otimes \xi_{v_{k-s+1}, i_{k-s+1}}  \right| \right|_2^2 \\
\leq & P(s-r)  \left|\left|  \sum_{j_{r+1}, \ldots, j_s} a_{w_{r+1}, j_{r+1} }\ldots a_{w_s, j_s} \right|\right|_2^2 \left| \left|  \sum_{i_{k-r}, \ldots, i_{k-s+1}}  \xi_{v_{k-r}, i_{k-r}} \otimes \ldots \otimes \xi_{v_{k-s+1}, i_{k-s+1}}  \right| \right|_2^2,
\end{split}
\end{equation}
for some polynomial $P$. Let $Q$ be a polynomial such that $P(s-r) \leq Q(k)$ for any choice of $s, r \in \mathbb{N}$ with $0 \leq s,r \leq k$. We may choose $Q$ independent of the clique $\Gamma_0$ that defined $P$. Combining \eqref{Eqn=LargeSummation} and \eqref{Eqn=SubSummation} we see that,
\[
\begin{split}
& \Vert P_m \mathcal{F}(a) \xi \Vert_2^2 \\
\leq & \sum_{\begin{array}{c}   m+k-l = s+r \\ 0 \leq s,r \leq k \end{array}} \!\!\!\!\!\!\!\! \sum_{\begin{array}{c}(u_1, \ldots, u_{s-r}) \in {\rm Cliq}_\Gamma(s-r) \end{array} } \!\!\!\!\!\!\!\!\!\!\!\!\!\!\!\!\!\!\!\!\!\!\!\!\!\!\!\!\!\!\!\! \sum_{\begin{array}{c} \boldw, \boldv \in \cWmin,\\ l(\boldw) = k, l(\boldv) = l,\\ v_1 = w_k \ldots v_{k-r} = w_{r+1}, \\ (v_{k-s+1},\ldots, v_{k-r}  ) = ( w_{s}, \ldots, w_{r+1}) = (u_1, \ldots, u_{s-r}) \end{array}} \!\!\!\!\!\!\!\!\!\!\!\!\!\!\!\!\!\!\!\!\!\!\!\!\!\!\!\!\!\!\!\!\sum_{\begin{array}{c}j_1, \ldots, j_k \end{array}} \!\!\!\!\!\sum_{\begin{array}{c} i_1, \ldots, i_l \end{array}} \\
&    Q(k) \Vert \widehat{a}_{w_1, j_1} \Vert_2^2 \ldots \Vert \widehat{a}_{w_k, j_k} \Vert_2^2  \Vert  \xi_{v_{1}, i_{1}} \Vert_2^2  \ldots  \Vert \xi_{v_{l}, i_{l}} \Vert_2^2 \\
  \\
 \leq & M (k+1)^2 Q(k) \Vert a \Vert_2^2 \Vert \xi \Vert_2^2.
\end{split}
\]
where $M$ is the number of cliques in $\Gamma$, which is finite since $\Gamma$ is finite.
\end{proof}

\begin{lem}\label{Lem=OneMinusCounit}
Let $\bG$ be a compact quantum group and let $L$ be a central length associated with $\widehat{\bG}$. Then there exists a central length $L' \geq L$ associated with $\widehat{\bG}$ such that $L' p_\alpha \geq 1$ for every $\alpha \in \Irr(\bG)$ nontrivial.
\end{lem}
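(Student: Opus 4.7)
The plan is to define $L' := L + (1 - p_{\alpha_0})$, where $\alpha_0 \in \Irr(\bG)$ denotes the trivial representation, and to verify directly that $L'$ is a central length with the required lower bound. Since $L$ is central and the $p_\alpha$ are central, $L'$ is affiliated with $\widehat{\bA}$ and central. The inequality $L' \geq L$ is obvious, and for any nontrivial $\alpha$ we have $L' p_\alpha = L p_\alpha + p_\alpha \geq p_\alpha$, as required.

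For the length axioms: positivity is clear; the counit condition follows from $\widehat{\epsilon}(p_\alpha) = \delta_{\alpha, \alpha_0}$, giving $\widehat{\epsilon}(1 - p_{\alpha_0}) = 0$; and the $\widehat{\kappa}$-invariance follows from $\widehat{\kappa}(p_\alpha) = p_{\overline{\alpha}}$ together with the fact that $\alpha_0$ is self-contragredient, so $\widehat{\kappa}(1 - p_{\alpha_0}) = 1 - p_{\alpha_0}$.

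The one point that requires a small calculation is subadditivity $\widehat{\Delta}(L') \leq L' \otimes 1 + 1 \otimes L'$. Writing $L = \bigoplus_\alpha \ell(\alpha) p_\alpha$ and setting $\ell'(\alpha) := \ell(\alpha) + (1 - \delta_{\alpha, \alpha_0})$, by equation \eqref{Eqn=Projection} the desired inequality reduces, on each block $p_\alpha \otimes p_\beta$, to checking that $\ell'(\gamma) \leq \ell'(\alpha) + \ell'(\beta)$ whenever $\gamma \subseteq \alpha \otimes \beta$. I would handle this by cases on whether $\alpha, \beta, \gamma$ equal $\alpha_0$: if $\gamma = \alpha_0$ then $\ell'(\gamma) = 0$ and there is nothing to prove; if $\gamma \neq \alpha_0$ and both $\alpha, \beta \neq \alpha_0$, then $\ell'(\alpha) + \ell'(\beta) - \ell'(\gamma) \geq \ell(\alpha) + \ell(\beta) - \ell(\gamma) + 1 \geq 1 \geq 0$; and if one of $\alpha, \beta$ equals $\alpha_0$, say $\alpha = \alpha_0$, then $\alpha \otimes \beta \simeq \beta$ forces $\gamma = \beta$, hence $\gamma \neq \alpha_0$ and equality $\ell'(\gamma) = \ell'(\beta) = \ell'(\alpha) + \ell'(\beta)$ holds. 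The case $\alpha = \beta = \alpha_0$ cannot occur when $\gamma \neq \alpha_0$.

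No serious obstacle is expected: the construction is completely elementary, and the only non-obvious verification, subadditivity of $\widehat{\Delta}$, is reduced to a finite case analysis using \eqref{Eqn=Projection} and the fact that the trivial representation behaves as a unit in the fusion ring.
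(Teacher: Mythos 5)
Your proposal is correct and is essentially the paper's own argument: the paper also takes $L' = \oplus_\alpha f'(\alpha)p_\alpha$ with $f'(\alpha)=f(\alpha)+1$ for nontrivial $\alpha$ (which is exactly your $L + (1-p_{\alpha_0})$) and verifies subadditivity by reducing, via \eqref{Eqn=Projection}, to $f'(\gamma)\leq f'(\alpha)+f'(\beta)$ for $\gamma\subseteq\alpha\otimes\beta$, with the $\widehat{\kappa}$- and $\widehat{\epsilon}$-conditions checked just as you do. Your case analysis for subadditivity is in fact spelled out a bit more completely than the paper's brief remark, but it is the same route.
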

\begin{proof}
Since $L$ is a central length we may write $L = \oplus_{\alpha \in \Irr(\bG)} f(\alpha) p_\alpha$. We define the central length $L' = \oplus_{\alpha \in \Irr(\bG)} f'(\alpha) p_\alpha$, where $f'(\alpha) = f(\alpha)+1$ if $\alpha$ is nontrivial and $f'(\alpha) = f(\alpha)$ in case $\alpha$ is trivial. As in the proof of Lemma \ref{Lem=GraphProductLength} the condition $\widehat{\Delta}(L') \leq L' \otimes 1 + 1 \otimes L'$ is equivalent to checking that $\sum_{\gamma \in \Irr(G), \gamma \subseteq \alpha \otimes \beta} f'(\gamma) p_\gamma^{\alpha \otimes \beta} \leq (f'(\alpha) + f'(\beta)) p_\alpha \otimes p_\beta$.  However, this condition easily follows from the fact that if both $\alpha$ and $\beta$ are trivial then $\alpha \otimes \beta$ is trivial and  so $\gamma$ is trivial whenever  $\gamma \subseteq \alpha \otimes \beta$. The condition $\widehat{\kappa}(L') = L'$ follows as in the proof of Lemma \ref{Lem=GraphProductLength}, see also Lemma \ref{Lem=KappComputation}. And finally by definition of the counit we have $\widehat{\epsilon}(L') = f'(\alpha_0) = f(\alpha_0) = 0$ with $\alpha_0 \in \Irr(\bG)$ trivial.
\end{proof}

\begin{thm}\label{Thm=RDPreservence}
Let $\Gamma$ be a finite graph and let, for every $v \in V \Gamma$, $\bG_v$ be a compact quantum group such that $(\widehat{\bG}_v, L_v)$ has (RD). Assume that for every clique $\Gamma_0$ the graph product $\widehat{\bG}_{\Gamma_0}$ has (RD).  Then the graph product $(\widehat{\bG} := \widehat{\bG}_\Gamma, L)$ has (RD) for some central length $L$. If $L_v p_\alpha \geq 1$ for every $v \in V\Gamma$ and $\alpha \in \Irr(\bG_v)$ nontrivial then $L$ can be taken as in Lemma \ref{Lem=GraphProductLength}.
\end{thm}
\begin{proof}
For $v \in V \Gamma$ denote by $L_v$ a central length for $\widehat{\bG}_v$ and let $L$ be the central length defined in Lemma \ref{Lem=GraphProductLength}. Assume by Lemma \ref{Lem=OneMinusCounit} and \cite[Remark 3.6]{VerRd} that,
\begin{equation}\label{Eqn=BiggerThanOneCondition}
L_v p_\alpha \geq 1, \qquad \forall v \in V\Gamma, \alpha \in \Irr(\bG_v).
\end{equation}
This implies that $L p_\alpha \geq l(\alpha)$ where $l(\alpha)$ the length of the reduced expression $\alpha = \alpha_1 \otimes \ldots \otimes \alpha_{l(\alpha_0)}$ with $\alpha \in \Irr(\bG)$. By  Proposition \ref{Prop=RDIntermediate} there exist a polynomial $P$ such that for every $k,l,m \in \mathbb{N}$ such that $\vert k-l \vert \leq m \leq k+l$ and $a \in  \widehat{\bA}_{(k)}$ we have $\Vert P_m \mathcal{F}(a) P_l \Vert \leq P(k) \Vert a \Vert_{2}$. Now, let $a \in q_k \widehat{\bA}$ and write $a = \sum_{j=0}^k a_{(j)}$ with $a_{(j)} \in \widehat{\bA}_{(j)}$, which is possible by the first paragraph. Take a vector $v \in q_l \cH$ and write $v = \sum_{i=0}^l v_{(i)}$ with $v_{(i)} = P_i v$. Since $\sum_{r=0}^m P_r \geq q_m$ and the projections $P_r$ are orthogonal, it follows that $\Vert q_m \mathcal{F}(a_{(j)}) q_l v \Vert_2^2 \leq \sum_{r=0}^m \Vert P_r \mathcal{F}(a_{(j)}) q_l v \Vert_2^2$. Next, we have an elementary equality that follows by considering word lengths and an inequality which follows from Cauchy-Schwarz and the triangle inequality,
\[
\begin{split}
\sum_{r=0}^m \Vert P_r \mathcal{F}(a_{(j)}) q_l v \Vert_2^2 = & \sum_{r=0}^m  \Vert \sum_{i=\vert j-r\vert}^{j+r} P_r \mathcal{F}(a_{(j)}) v_{(i)} \Vert_2^2 \\
\leq & (2j+1) \sum_{r=0}^m  \sum_{i = \vert j - r\vert}^{j+r} \Vert P_r \mathcal{F}(a_{(j)}) v_{(i)} \Vert_2^2.
\end{split}
\]
Now, for $\vert j-i \vert \leq r \leq j+i$ we have,
\[
\Vert P_r \mathcal{F}(  a_{(j)}  ) v_{(i)} \Vert_2^2 \leq P(j)^2 \Vert a_{(j)} \Vert_2^2 \Vert v_{(i)} \Vert_2^2.
\]
For other values of $r$ we have $\Vert P_r \mathcal{F}(  a_{(j)}  ) v_{(i)} \Vert_2^2 = 0$. Since, as we observed, $\vert i-j \vert \leq r \leq j+i$ for any given value of $i$, this shows that we can estimate,
\[
\begin{split}
\sum_{r=0}^m \Vert P_r \mathcal{F}(a_{(j)}) q_l v \Vert_2^2 \leq & P(j)^2 (2j+1)   \sum_{i = 0}^{j+m} \sum_{r=0}^m \Vert a_{(j)} \Vert_2^2 \Vert v_{(i)} \Vert_2^2 \\
 \leq & P(j)^2 (2j+1)^2  \sum_{i = 0}^{j+m} \Vert a_{(j)} \Vert_2^2 \Vert v_{(i)} \Vert_2^2 \\
\leq &    P(j)^2 (2j+1)^2   \Vert a_{(j)} \Vert_2^2 \Vert v \Vert_2^2.
\end{split}
\]
Now, using the triangle inequality and the Cauchy-Schwarz inequality we have
\[
\begin{split}
& \Vert q_m \mathcal{F}(a) q_l v \Vert_2^2  \leq (\sum_{j=0}^k \Vert q_m \mathcal{F}(a_{(j)}) q_l v \Vert_2)^2
\leq  (k+1) \sum_{j=0}^k \Vert q_m \mathcal{F}(a_{(j)}) q_l v \Vert_2^2 \\
\leq &   (k+1) \sum_{j=0}^k P(j)^2 (2j +1)^2  \Vert a_{(j)} \Vert_{2}^2     \Vert v \Vert_2^2
\leq    (k+1) (2k+1)^2 P'(k)    \Vert a \Vert_{2}^2     \Vert v \Vert_2^2
=  P''(k)   \Vert a \Vert_{2}^2     \Vert v \Vert_2^2,
\end{split}
\]
for some polynomials $P, P', P''$ that satisfy the property that, for every $0 \leq j \leq k$, $P(j)^2 \leq P'(k)$ and $P''(k) = (k+1)(2k+1)^2 P'(k)$.
\end{proof}

\begin{cor}
Let $\Gamma$ be a finite graph. For $v \in V\Gamma$ let $\bG_v$ be a compact quantum group and assume that $\widehat{\bG}_v$ has either polynomial growth or is a classical compact group with (RD). Then the discrete dual of the graph product has (RD).
\end{cor}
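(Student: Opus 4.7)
The plan is to apply Theorem \ref{Thm=RDPreservence}, whose two hypotheses I must verify: first, that $(\widehat{\bG}_v, L_v)$ has (RD) for every vertex $v \in V\Gamma$; second, that for every clique $\Gamma_0 \subseteq \Gamma$ the graph product $\widehat{\bG}_{\Gamma_0}$ has (RD).

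The first hypothesis is immediate. By \cite{VerRd}, every discrete quantum group of polynomial growth has (RD), while classical discrete groups with (RD) satisfy (RD) by assumption. Hence each $\widehat{\bG}_v$ has (RD) with respect to some length $L_v$.

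For the second hypothesis, fix a clique $\Gamma_0 \subseteq \Gamma$ with vertex set $\{v_1, \ldots, v_k\}$. Since all pairs of distinct vertices of $\Gamma_0$ are adjacent, the commutation relations in the graph product identify $\bG_{\Gamma_0}$ with the direct product of the compact quantum groups $\bG_{v_i}$, iterating the two-vertex-clique identification from Remark \ref{Rmk=IsoVn}; dually, $\widehat{\bG}_{\Gamma_0} \simeq \widehat{\bG}_{v_1} \times \cdots \times \widehat{\bG}_{v_k}$ as discrete quantum groups. I now proceed by induction on $k$. The base case $k=1$ is exactly the first hypothesis verified above. For the inductive step, assume that $\widehat{\bG}_{v_1} \times \cdots \times \widehat{\bG}_{v_k}$ has (RD); Lemma \ref{Lem=RDTensor}, applied with its first factor taken to be this product and its second factor taken to be $\widehat{\bG}_{v_{k+1}}$, shows that $\widehat{\bG}_{v_1} \times \cdots \times \widehat{\bG}_{v_{k+1}}$ has (RD), since the new factor $\widehat{\bG}_{v_{k+1}}$ is by hypothesis either of polynomial growth or a classical group with (RD). As $\Gamma$ is finite, the induction terminates after finitely many steps.

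The main, essentially notational, subtlety is that Lemma \ref{Lem=RDTensor} is asymmetric in its two factors: one factor need only have (RD), but the other must be of polynomial growth or classical with (RD). This asymmetry is handled by the inductive set-up, in which the newly adjoined factor at every step is a single vertex quantum group and therefore automatically satisfies the stronger requirement. With both hypotheses of Theorem \ref{Thm=RDPreservence} verified, the corollary follows.
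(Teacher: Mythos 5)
Your proposal is correct and follows the same route as the paper, which simply cites Theorem \ref{Thm=RDPreservence} together with Lemma \ref{Lem=RDTensor}; your induction over the vertices of a clique, keeping the already-built product as the first (RD-only) factor and the new vertex as the second factor, is exactly the intended way to verify the clique hypothesis. The only difference is that you spell out the details the paper leaves implicit.
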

\begin{proof}
This is a consequence of Theorem \ref{Thm=RDPreservence} and Lemma \ref{Lem=RDTensor}.
\end{proof}

\begin{cor}
Let $\Gamma$ be finite and without edges. Let $\bG = \star_{v \in V\Gamma} \bG_v$. If for every $v \in V\Gamma$, $\widehat{\bG}_v$ has (RD), then $\widehat{\bG}$ has (RD). I.e. (RD) is preserved by finite free products.
\end{cor}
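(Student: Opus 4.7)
The plan is to deduce this corollary as a direct application of Theorem \ref{Thm=RDPreservence}. Since $\Gamma$ is finite and has no edges, the graph product $\bG_\Gamma$ agrees with the free product $\star_{v\in V\Gamma}\bG_v$ (cf.\ Remark \ref{Rmk=IsoVn} and the analogous fact for quantum groups, which follows from the universal property in Theorem \ref{Thm=GraphCoreps}). So it suffices to verify the hypotheses of Theorem \ref{Thm=RDPreservence}.

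First I would identify the cliques of $\Gamma$. Because $\Gamma$ has no edges, every clique has at most one vertex: the cliques are precisely the empty graph and the singleton subgraphs $\{v\}$ for $v\in V\Gamma$. For a singleton clique $\{v\}$, the graph product $\widehat{\bG}_{\{v\}}$ is canonically isomorphic to $\widehat{\bG}_v$, which has (RD) by hypothesis. For the empty graph the associated graph product is the trivial quantum group, which trivially has (RD). Hence the clique hypothesis in Theorem \ref{Thm=RDPreservence} is satisfied.

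Since $(\widehat{\bG}_v,L_v)$ has (RD) for every $v\in V\Gamma$ by assumption, Theorem \ref{Thm=RDPreservence} applies and yields that $\widehat{\bG}=\widehat{\bG}_\Gamma$ has (RD). There is no real obstacle here; the whole content has been absorbed into Theorem \ref{Thm=RDPreservence} and the combinatorial observation that a graph with no edges has only singleton cliques. The only mild subtlety worth a line of comment is making explicit that the graph product quantum group associated to an edgeless graph coincides with the free product, but this is immediate from the universal properties (Proposition \ref{Prop=UniversalPropertyReduced} together with Theorem \ref{Thm=GraphCoreps}).
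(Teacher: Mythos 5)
Your proposal is correct and follows exactly the paper's route: the paper proves this corollary as an immediate consequence of Theorem \ref{Thm=RDPreservence}, and your extra verification that an edgeless graph has only singleton (and empty) cliques, whose graph products are the vertex quantum groups themselves, is precisely the implicit step being invoked. Nothing further is needed.
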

\begin{proof}
This is a consequence of Theorem \ref{Thm=RDPreservence}.
\end{proof}

\begin{comment}
\bigskip

\footnotesize

\noindent
{\sc Martijn CASPERS \\
  Fachbereich Mathematik und Informatik der Universit\"at M\"unster,
Einsteinstrasse 62,
48149 M\"unster, Germany
\em E-mail address: \tt martijn.caspers@uni-muenster.de}

\bigskip

\noindent
{\sc Pierre FIMA \\ \nopagebreak
  Univ Paris Diderot, Sorbonne Paris Cit\'e, IMJ-PRG, UMR 7586, F-75013, Paris, France \\
  Sorbonne Universit\'es, UPMC Paris 06, UMR 7586, IMJ-PRG, F-75005, Paris, France \\
  CNRS, UMR 7586, IMJ-PRG, F-75005, Paris, France \\
\em E-mail address: \tt pierre.fima@imj-prg.fr}
\end{comment}

\end{document}